\documentclass[11pt]{amsart}  
\usepackage{amsthm, amsmath, amscd, amssymb, latexsym, stmaryrd, color}
\usepackage[top=2.5cm, bottom=2.5cm, left=2cm, right=2cm]{geometry}
\usepackage[colorinlistoftodos]{todonotes}
\theoremstyle{plain}

\newtheorem*{theorem*}{Theorem}
\newtheorem{theorem}{Theorem}[section]
\newtheorem{lem}[theorem]{Lemma}
\newtheorem{prop}[theorem]{Proposition}

\newtheorem{cor}[theorem]{Corollary}
\theoremstyle{definition}

\newtheorem{defn}[theorem]{Definition}

\newtheorem{ex}[theorem]{Example}

\newtheorem{rmk}[theorem]{Remark}

\theoremstyle{remark}

\numberwithin{equation}{section}
\usepackage[mathscr]{eucal}
\usepackage{stackrel,amssymb}
\usepackage{graphics, graphpap}
\usepackage{array, tabularx, longtable}
\usepackage{color}

\usepackage{url}
\usepackage[T1]{fontenc}
\usepackage{array,epsfig}
\usepackage{amsmath}
\usepackage{old-arrows}
\usepackage{amsfonts}
\usepackage{amssymb}
\usepackage{amsxtra}
\usepackage{latexsym}
\usepackage{dsfont}
\usepackage[thinlines]{easytable}
\usepackage{makecell}
\usepackage{tabularray}
\usepackage{mathrsfs}
\usepackage{color}
\usepackage[all]{xy}
\usepackage{}
\usepackage{xfrac}
\usepackage{tikz}
\usepackage{tikz-cd}
\usepackage{graphicx}
\usepackage{mathtools}
\usepackage{caption}
\makeatletter
\newcommand\mathcircled[1]{%
  \mathpalette\@mathcircled{#1}%
}
\newcommand\@mathcircled[2]{%
  \tikz[baseline=(math.base)] \node[draw,circle,inner sep=1pt,color=red] (math) {$\m@th#1#2$};%
}
\makeatother

\usepackage[pagebackref=true]{hyperref}

\usepackage[OT2,T1]{fontenc}
\usepackage[page,toc,titletoc,title]{appendix}

\hypersetup{
 colorlinks,
 linkcolor=blue, citecolor=blue
 }
 
\newcommand{\mscr}{\mathscr{M}}

\newcommand{\rscr}{\mathscr{R}}

\newcommand{\uscr}{\mathscr{U}}

\newcommand{\acal}{\mathcal{A}}

\newcommand{\jcal}{\mathcal{J}}
\newcommand{\kcal}{\mathcal{K}}

\newcommand{\tcal}{\mathcal{T}}

\newcommand{\xfrak}{\mathfrak{X}}

\newcommand{\anor}{\textnormal{A}}

\newcommand{\hbf}{\mathbf{H}}

\newcommand{\lnor}{\textnormal{L}}
\newcommand{\mnor}{\textnormal{M}}

\newcommand{\et}{\textnormal{\'et}}

\newcommand{\an}{\textnormal{an}}

\newcommand{\loc}{\textnormal{loc}}
\newcommand{\mot}{\textnormal{mot}}

\newcommand{\ct}{\textnormal{ct}}

\newcommand{\univ}{\textnormal{univ}}
\newcommand{\geo}{\textnormal{geo}}
\newcommand{\un}{\textnormal{un}}

\newcommand{\id}{\operatorname{id}}

\newcommand{\pr}{\operatorname{pr}}
\newcommand{\Hom}{\operatorname{Hom}}

\renewcommand{\sp}{\operatorname{sp}}
\newcommand{\daet}{\mathbf{DA}^{\et}}
\newcommand{\daetct}{\mathbf{DA}^{\et}_{\textnormal{ct}}}
\newcommand{\dcat}{\mathbf{D}}
\newcommand{\dcatct}{\mathbf{D}_{\textnormal{ct}}}
\newcommand{\dn}{\mathbf{DN}}

\newcommand{\betti}{\operatorname{Bti}}
\newcommand{\nori}{\textnormal{Nori}}

\renewcommand{\pr}{\operatorname{pr}}

\newcommand{\gm}{\mathbb{G}}
\newcommand{\phnor}{{}^p\mathrm{H}}

\newcommand{\rat}{\operatorname{rat}}

\newcommand{\var}{\operatorname{Var}}

\newcommand{\perv}{\operatorname{Perv}}

\newcommand{\localsystem}{\operatorname{Loc}}

\newcommand{\op}{\operatorname{op}}

\newcommand{\colim}{\operatorname{colim}}

\newcommand{\Spec}{\operatorname{Spec}}

\DeclareSymbolFont{cyrletters}{OT2}{wncyr}{m}{n}
\DeclareMathSymbol{\Sha}{\mathalpha}{cyrletters}{"58}
\DeclareMathSymbol{\Be}{\mathalpha}{cyrletters}{"42}

\title[Motivic nearby functors on perverse Nori motives]{Motivic nearby functors on perverse Nori motives}  

\author[Khoa Bang. P]{Khoa Bang Pham}
\address{The Hong Kong Universtiy of Science and Technology, Clear Water Bay, Hong Kong}
\email{phamkb@ust.hk}
\thanks{}

\begin{document}           
\begin{abstract}
In this article, we show that several candidate motivic (unipotent) nearby functors coincide on perverse Nori motives. In particular, the canonical functor defined via the universal abelian factorization admits an expression in terms of the six operations and yields a monodromy sequence on perverse Nori motives. We then deduce the motivic integral identity of Kontsevich-Soibelman for perverse Nori motives. In the appendix, we also prove a version of Beilinson's equivalence for perverse sheaves of geometric origin, which is used repeatedly throughout the article.
\end{abstract}
\maketitle                 


\section{Introduction}

\subsection{State of the art} In \cite{braden-2003}, Braden proves the so-called hyperbolic localization theorem, which is later used in many places; for instance, in the proof of the geometric Satake equivalence of Mirković–Vilonen \cite{mirkovic+vilonen-2007}. Roughly speaking, let $X$ be a  scheme equipped with a sufficiently good action (to be made precise later) of the multiplicative group $\gm_m$. One can then form the \textit{space of fixed points}, the \textit{attractor} and the \textit{repeller} $X^{\circ},X^+,X^-$, respectively, 
\begin{align*}
    X^0 & = \left \{x \in X \mid gx = x \ \forall \ g \in \gm_m \right \} \\ 
    X^+ & = \left \{x \in X \mid \exists \lim_{g \to 0 } gx \right \} \\
    X^- & =\left \{x \in X \mid \exists \lim_{g \to \infty} gx \right \}.
\end{align*}
There are natural morphisms: two inclusions $e^{\pm} \colon X^{\circ} \longrightarrow X^{\pm}$ and two limit morphisms $\pi^{\pm} \colon X^{\pm} \longrightarrow X^{\circ}$ (sending $x$ to $\lim_{g \to 0}gx$ and $\lim_{g \to \infty}gx$). The Braden transformation is a natural transformation
\begin{equation*}
    (\pi^-)_*(e^-)^!(M) \longrightarrow (\pi^+)_!(e^+)^*(M)
\end{equation*}
The Braden transformation is an isomorphism if $M$ is $\gm_m$-equivariant, namely, $a^*(M) \simeq p^*(M)$, where $a,p \colon \gm_m \times X \longrightarrow X$ is the action morphism and the projection, respectively. Several subsequent works have developed functorial approaches directly related to \cite{braden-2003}, the reader can consult \cite{richarz-2018}\cite{drinfeld+gaitsgory-2014}\cite{drinfeld-2015}. The commutation between the Braden transformations and nearby functors plays an important role in different areas. In geometric representation theory, it can be viewed as a functorial enhancement of the compatibility between the Bernstein isomorphism and the constant terms maps (see \cite{richarz-2021}). On the other hand, in Donaldson-Thomas theory, this yields a motivic enhancement of the integral identity of Kontsevich-Soibelman (see \cite{kontsevich-2008}) thanks to the works \cite{bang-2024} (see also \cite{florian-2024}). Since Braden's theorem holds whenever one has a six-functor formalism, this poses the following natural question: when do Braden transformations commute with nearby functors? As showed in \cite{bang-2024} (see \cite{richarz-2018} for the \'etale setting), the answer is positive whenever the nearby functor is expressed in terms of six operations. However, the story is unclear for perverse Nori motives. Before entering the story, let us say a few words about perverse Nori motives. Let $X$ be a scheme and let $\daet(X)$ be the category of \'etale motives with rational coefficients (see \cite{ayoub-2014}). Let $\daetct(X)$ be the category of constructible \'etale motives. By the work \cite{ayoub-2010}, there is a Betti realization functor 
  \begin{equation*}
      \betti^*_X \colon \daetct(X) \longrightarrow \dcatct^b(X)
  \end{equation*}
  where $\dcatct^b(X)$ is the bounded derived category with algebraically constructible cohomology. In \cite{florian+morel-2019}, Ivorra, Morel defines perverse Nori motives $\mscr\perv(X)$ as the universal abelian factorization \footnote{Since the image of the Betti realization lands in complexes of geometric origin, in this definition, from now we can replace $\dcatct^b(X),\perv(X)$ by $\dcat^b_{\geo}(X),\perv_{\geo}(X)$, those of geometric origin.}
\begin{equation*}
    \begin{tikzcd}[sep=large]
              \daetct(X) \arrow[r,"\betti^*_X"] \arrow[d,"\phnor^0_{\univ}",swap] & \dcat_{\ct}^b(X) \arrow[r,"\phnor^0"] & \perv(X) \\
              \mscr\perv(X) \arrow[rru,"\rat_X",swap] &  & 
    \end{tikzcd}
\end{equation*}
In some sense, $\mscr\perv(X)$ is expected to be the heart of certain perverse motivic $t$-structure on \'etale motives $\daetct(X)$. In \cite{tubach-2025}, Tubach proves that this is indeed the case under the standard conjectures, namely, $\daetct(X) \simeq \dn^b(X)$. Thus, one can imagine that there should exist a six-functor formalism, a theory of motivic nearby functors for perverse Nori motives. The six-functor formalism is realized in \cite{florian+morel-2019} and \cite{terenzi-2025} but the developement of nearby functors is not yet established therein. Let $f \colon X\longrightarrow \mathbb{A}_k^1$ be a morphism of $k$-varieties, let us consider the following diagram with cartesian squares
\begin{equation*}
    \begin{tikzcd}[sep=large]
       X _{\eta} \arrow[r,"j_f"]  \arrow[d,"f_{\eta}",swap] & X  \arrow[d,"f"] & X_{\sigma} \arrow[d,"f_{\sigma}"] \arrow[l,"i_f",swap] \\ 
        \eta \coloneqq \mathbb{G}_{m,k} \arrow[r,"j_{\id}"] & \mathbb{A}_k^1 & \sigma \coloneqq \Spec(k). \arrow[l,"i_{\id}",swap],
    \end{tikzcd}
    \end{equation*} 
where $i \colon \Spec(k) \longhookrightarrow \mathbb{A}_k^1$ is the zero section and $j \colon \gm_{m,k} \longhookrightarrow \mathbb{A}_k^1$ is its open complement. There is an obvious candidate for motivic (unipotent) nearby functors associated with $f$ (on perverse Nori motives)
\begin{equation*}
    \Psi_f^{\univ}[-1],\Upsilon_f^{\univ}[-1] \colon \mscr\perv(X_{\eta}) \longrightarrow \mscr\perv(X_{\sigma}),
\end{equation*} 
defined in terms of the universal property since the usual (unipotent) nearby functors 
\begin{equation*} 
\Psi_f^{\an}[-1],\Upsilon_f^{\an}[-1] \colon \dcatct^b(X_{\eta}) \longrightarrow \dcatct^b(X_{\sigma})
 \end{equation*}
 are perverse $t$-exact. However, this definition lacks functorialities in the sense that it does not come from six operations, which is one of the key step in the proof of the motivic integral identity in \cite{florian-2024}\cite{bang-2024} and moreover, it has poor $\infty$-categorial features since it is only defined on abelian categories. Therefore, we expect $\Psi_f^{\univ}$ admits an expression in terms of six operations. A similar problem arises for monodromy operators 
 \begin{equation*}
     N^{\univ} \colon \Upsilon_f^{\univ}[-1] \longrightarrow \Upsilon_f^{\univ}[-1](-1),
 \end{equation*}
defined by the universal property on abelian categories perverse Nori motives and it is natural to wonder whether it gives rise to a monodromy sequence on derived categories
\begin{equation*}
    i^*j_* \longrightarrow \Upsilon_f^{\univ} \longrightarrow \Upsilon_f^{\univ}(-1) \longrightarrow +1
\end{equation*} 
In this article, we would like to provide affirmative answers to all problems addressed above. We emphasize that our approach does not rely on the standard conjectures. In the course of this work, we also develop several auxiliary results concerning nearby functors on Nori motives and constructible complexes of geometric origin. 
\subsection{Statements of the main results}
Throughout this paper, $k$ is a subfield of $\mathbb{C}$. Let $X$ be a $k$-variety, we denote by $\dn^b(X) = \dcat^b(\mscr\perv(X))$ the unbounded derived category of perverse Nori motives $\mscr\perv(X)$ and $\dn(X) = \operatorname{Ind}(\dn^b(X))$ its $\infty$-ind completion. In the subsequent, functors with superscript $(-)^{\anor}$ mean those constructed by Ayoub, functors with superscript $(-)^{\univ}$ mean those obtained by the universal property of perverse Nori motives and functors with superscript $(-)^{\an}$ mean the usual ones. 
\begin{theorem} \label{thm: first main theorem}
    Let $f \colon X \longrightarrow \mathbb{A}_k^1$ be a morphism of $k$-varieties, there exist natural isomorphisms of functors
    \begin{align*}
        \Psi_f^{\anor} \simeq \Psi_f^{\univ} \colon \dn^b(X_{\eta}) \longrightarrow \dn^b(X_{\sigma}) \\ 
       \log_f^{\nori} \simeq \Upsilon_f^{\anor} \simeq \Upsilon_f^{\univ} \colon \dn^b(X_{\eta}) \longrightarrow \dn^b(X_{\sigma})
    \end{align*}
    where $\Psi_f^{\anor}, \Upsilon_f^{\anor},\log_f^{\nori}$ are Ayoub's constructions of the motivic nearby functor, the motivic unipotent nearby functor, the logarithmic specialization system, respectively (see \cite{ayoub-thesis-2}\cite{ayoub-nearby-cycles}\cite{ayoub-2014}). Consequently, there is a Nori-monodromy sequence
    \begin{equation*}
       (\textnormal{Mon}^{\univ}) \colon i^*j_* \longrightarrow \Upsilon_f^{\univ} \overset{N^{\univ}}{\longrightarrow} \Upsilon_f^{\univ}(-1) \longrightarrow +1
    \end{equation*}
    on $\dn(X_{\sigma})$ and $N^{\univ}$ is nilpotent on $\dn^b(X_{\sigma})$. If we denote by $\textnormal{Mon}^{\anor},\textnormal{Mon}^{\an}$ the corresponding monodromy sequences of the \'etale-motivic setting and the classical Betti setting, then the result can be visualized as follows
    \begin{equation*}
    \begin{tikzcd}[column sep=large, row sep = 0.2]
        \daetct(-) \arrow[rr,"\betti^*",bend left =  30] \arrow[r,"\operatorname{Nri}^*"] & \dn^b(-) \arrow[r] & \dcatct^b(-) \\ 
       \Psi_f^{\anor},\Upsilon_f^{\anor} \arrow[r] & \Psi_f^{\univ}, \Upsilon_f^{\univ} \arrow[r] & \Psi_f^{\an},\Upsilon_f^{\an} \\ 
          \textnormal{Mon}^{\anor} \arrow[r] & \textnormal{Mon}^{\univ}\arrow[r] & \textnormal{Mon}^{\an}
        \end{tikzcd}
    \end{equation*}
\end{theorem}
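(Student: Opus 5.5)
The plan is to first construct Ayoub's functors $\Psi_f^{\anor}$, $\Upsilon_f^{\anor}$ and $\log_f^{\nori}$ on $\dn^b$, and then compare them with the universal ones through their Betti realizations.

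\textbf{Step 1: Ayoub's functors on $\dn$.} The six-functor formalism on $\dn(-)$ from \cite{florian+morel-2019}\cite{terenzi-2025} lets us transport Ayoub's constructions verbatim. For $\Upsilon_f^{\anor}$ we use the formula
\begin{equation*}
\Upsilon_f^{\anor}(M)=\colim_n\, i^*j_*\bigl(M\otimes f_\eta^*\mathcal{L}og^n\bigr),
\end{equation*}
where $\mathcal{L}og^n$ are the unipotent logarithmic motives on $\gm_m$, realized in $\mscr\perv(\gm_m)$. For $\Psi_f^{\anor}$ we add the Kummer towers $e_n\colon \gm_m\to\gm_m$ and take the analogous colimit. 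The realization functor $\operatorname{Nri}^*\colon \daetct\to\dn^b$ commutes with the six operations, which gives the left-hand column of comparison maps. The Betti realization of the formula is the classical Beilinson description of $\Upsilon^{\an}_f$ (respectively $\Psi^{\an}_f$), so $\rat\circ\Upsilon_f^{\anor}\simeq \Upsilon_f^{\an}\circ\rat$.

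\textbf{Step 2: Exactness and stabilization.} We need $\Upsilon_f^{\anor}[-1]$ to be perverse $t$-exact on $\dn^b$ and the colimit to stabilize on objects of $\dn^b$. The functor $\rat$ is conservative and $t$-exact, and the colimit stabilizes after realization because the monodromy is nilpotent on $\perv$ of bounded length. Hence $\phnor^k$ of the transition maps become isomorphisms after applying $\rat$, so they are isomorphisms already in $\mscr\perv$. A Beilinson-type argument (the appendix result for geometric origin) ensures the classical formula holds in $\dcat^b_{\geo}$ with finite $n$. This is where that equivalence enters.

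\textbf{Step 3: Identification with the universal functors.} Both $\Upsilon_f^{\anor}[-1]$ and $\Upsilon_f^{\univ}[-1]$ restricted to $\mscr\perv(X_\eta)$ are exact functors lifting $\Upsilon^{\an}_f[-1]$ along the faithful $\rat$. I expect the main obstacle to be producing a natural isomorphism, not merely an objectwise one. To get it, I would show that $\Upsilon_f^{\anor}\circ\phnor^0_{\univ}\simeq \phnor^0_{\univ}\circ\Upsilon_f^{\text{\'et}}$ on $\daetct(X_\eta)$, using compatibility of $\operatorname{Nri}^*$ with Ayoub's \'etale nearby functor. The universal property of the abelian factorization then gives the isomorphism with $\Upsilon_f^{\univ}$ on hearts, compatible with $\rat$.

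Next, we extend from hearts to $\dn^b$. Since $\dn^b(X_\eta)=\dcat^b(\mscr\perv(X_\eta))$ and both functors are derived from exact functors, the extension follows, using that Ayoub's functor is determined by its restriction to the heart via a filtered/realization argument at the $\infty$-level (e.g. via Nori's theorem on derived categories of perverse motives). The same argument handles $\Psi$. The identification $\log_f^{\nori}\simeq\Upsilon_f^{\anor}$ follows by comparing the two colimit presentations, which Ayoub shows agree in $\daet$, and transporting the comparison by $\operatorname{Nri}^*$ plus conservativity.

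\textbf{Step 4: Monodromy.} In Ayoub's formalism the sequence $0\to\mathcal{L}og^{n-1}(1)\to\mathcal{L}og^n\to\mathbb{1}\to 0$ yields a cofiber sequence
\begin{equation*}
i^*j_*\longrightarrow\Upsilon_f^{\anor}\overset{N}{\longrightarrow}\Upsilon_f^{\anor}(-1).
\end{equation*}
This sequence exists on $\dn(X_\sigma)$ after ind-completion. Transporting it through Step 3 gives $\textnormal{Mon}^{\univ}$. One still has to check that $N$ agrees with $N^{\univ}$; this follows by faithfulness of $\rat$, since both realize $N^{\an}$. Nilpotency on $\dn^b$ follows from nilpotency of $N^{\an}$ together with faithfulness of $\rat$ on hom-groups of the heart, plus a dévissage over perverse cohomology.
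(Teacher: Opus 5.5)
Your architecture is the paper's. You transport Ayoub's constructions to $\dn$, compare with the Betti side, identify with $\Psi_f^{\univ},\Upsilon_f^{\univ}$ by uniqueness in the universal abelian factorization, and transport the logarithmic monodromy triangle. But the comparison that carries the real content is assumed rather than proved.

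In Step~4 you conclude $N=N^{\univ}$ ``since both realize $N^{\an}$''. That the Betti realization of Ayoub's operator $N$ (coming from $\mathds{1}\to\mathrm{Log}^{\vee}\to\mathrm{Log}^{\vee}(-1)$) is the logarithm of the unipotent part of the topological monodromy $T$ is exactly what must be shown. It is also a prerequisite for $N^{\univ}$ to exist at all: property \textbf{P2} of Ivorra--Morel needs an \'etale and a Betti transformation that correspond under $\betti^*\Upsilon_f^{\anor}\simeq\Upsilon_f^{\an}\betti^*$ (Corollary~\ref{cor: unipotent nearby cycles under betti realizations}). This is not a formality. $N$ is built from the Kummer extension, $N^{\an}$ from the deck transformation on $\widetilde{X}_\eta$, and one must prove $T=\exp(N)$ on $\Upsilon_f^{\an}$. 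The paper does this in Section~3.3:
\begin{itemize}
\item it computes the monodromy on $\Psi_f^{\an}(M\otimes f_\eta^*\mathrm{Log}^\vee_{m-1})\simeq\Psi_f^{\an}(M)\otimes\bigoplus_{i<m}\mathbb{Q}(-i)$ via $\operatorname{Sym}$ of the Kummer matrix $L_2$ (Lemma~\ref{lem: monodromy matrix});
\item it shows the components of the $\pi_1$-equivariant map $\Upsilon_f^{\an}(M)\to\Psi_f^{\an}(M)\otimes\bigoplus_{i<m}\mathbb{Q}(-i)$ are $\operatorname{can}\circ N_r$ (analogue of Ayoub's Prop.~11.20);
\item it combines this with $m!\,N_m=N^{\circ m}$ (Lemma~\ref{lem: nilpotency of monodromy operator}).
\end{itemize}
Likewise, your Step~1 claim that the Betti realization of the log formula ``is'' Beilinson's description of $\Upsilon_f^{\an}$ needs Proposition~\ref{prop: isomorphisms of sp on geometric complexes}. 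That argument uses perversity of $\log_f^{\an}[-1]$ (from the monodromy triangle and nilpotency) and the canonical splitting of the $N_m$-triangle for $m\gg 0$. Together these identify $\log_f^{\an}(M)$ with a perverse cohomology object of $\chi_f(M\otimes f_\eta^*\mathrm{Log}^\vee_{m-1})$, which is Beilinson's $\Upsilon_f^{\an}(M)$.

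Separately, the mechanism in Step~2 is false: the system $i^*j_*(M\otimes f_\eta^*\mathrm{Log}^\vee_n)$ does not stabilize. Take $f=\id$ and $M=\mathds{1}$. Then $i^*j_*\mathrm{Log}^\vee_n\simeq\mathds{1}\oplus\mathds{1}(-n-1)[-1]$, namely the invariants and twisted coinvariants of a single unipotent Jordan block. The transition maps are isomorphisms on $H^0$ but zero on $H^1$, because the image of $\mathrm{Log}^\vee_{n-1}\to\mathrm{Log}^\vee_n$ equals the image of $T-1$. In general the terms are $\log_f(M)\oplus\log_f(M)(-m)[-1]$ for $m\gg0$, with the twist changing. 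So one perverse degree stabilizes, the other is ind-zero, and only the colimit lies in $\dn^b$.

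This can be repaired, since vanishing of long composites of transitions is detected by the faithful exact $\rat$ on hearts. Compactness is easier as in the paper. Ayoub's functors commute with $\operatorname{Nri}^*$, and $\dn(X)$ is compactly generated by the $p_*\mathds{1}_Z(n)$ (Proposition~\ref{prop: compactness of Ayoub's functors}). For $\log_f$ one uses the one-branch criterion (Corollaries~\ref{cor: criterion for compactness} and~\ref{cor: log sp preserves compact motives}).
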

Let us unravel several hidden aspects of the theorem that cannot be overestimated. First, the Ayoub's functors are first defined on ind-Nori motives $\dn(X_{\eta}) = \operatorname{Ind}(\dn^b(X_{\eta}))$ and the isomorphisms $\Upsilon_f^{\anor} \simeq \Upsilon_f^{\univ}, \Psi_f^{\anor} \simeq \Psi_f^{\univ}$ implicitly say that Ayoub's functors preserve (bounded) Nori motives. Second, the existence of the Nori-monodromy operator does not come for free. Since one expects to define it via the universal property, one has to show that the image of the motivic monodromy sequence under the Betti realization is the constructible monodromy sequence and after that, $N^{\nori}$ is only defined on abelian categories and hence computing its homotopy fiber (which is $i^*j_*$) is highly nontrivial. Third, the compatibility of monodromy sequences (and also unipotent nearby functors) under the Betti realization is a "unipotent" analogue of \cite{ayoub-2010}; however, in \cite{ayoub-2010}, the compatibility is at the level of functors, not at the level of sequences. In order to solve these problems, here we have to compare unipotent functors with logarithmic specialization systems on constructible complexes of geometric origin (for the motivic setting, see \cite{ayoub-thesis-2}\cite{ayoub-nearby-cycles}\cite{ayoub-2014}). Lastly, we deduce the Kontsevich-Soibelman integral identity on Nori motives
\begin{cor}
Let $X$ be a $k$-variety equipped with an action of $\gm_{m,k}$ and let $f \colon X \longrightarrow \mathbb{A}_k^1$ be a $\gm_m$-equivariant morphism of $k$-varieties. There is a commutative diagram of the form
    \begin{equation*}
        \begin{tikzcd}[sep=large]
            (\pi^-_{\sigma})_*(e^-_{\sigma})^!\Psi_f^{\univ}(M) \arrow[d] & \Psi_{f^0}^{\univ}(\pi^-_{\eta})_*(e^-_{\eta})^!(M) \arrow[d] \arrow[l] \\ 
            (\pi^+_{\sigma})_!(e^+_{\sigma})^*\Psi_f^{\univ}(M) \arrow[r]  & \Psi_{f^0}^{\univ} (\pi_{\eta}^+)_!(e^+_{\eta})^*(M)
        \end{tikzcd}
    \end{equation*}
natural in $M \in \dn^b(X_{\eta})$. Moreover, then all arrows are isomorphisms provided that $M$ is $\gm_m$-equivrariant. In particular, let $X$ be a $k$-variety and let $\mathbb{G}_{m,k}$ act on $\mathbb{A}_k^{d_1} \times_k \mathbb{A}_k^{d_2} \times_k X$ by 
$\lambda(\mathbf{a}_1,\mathbf{a_2},x) = (\lambda^{>0}\mathbf{a}_1,\lambda^{<0}\mathbf{a}_2,x)  \ \forall \ \lambda \neq 0$. Let $f \colon \mathbb{A}_k^{d_1} \times_k \mathbb{A}_k^{d_2} \times_k X \longrightarrow \mathbb{A}_k^1$ be a $\mathbb{G}_{m,k}$-equivariant morphism, where $\mathbb{A}_k^1$ is endowed with the trivial $\mathbb{G}_{m,k}$-action, i.e., on points,
\begin{equation*}
    f(\mathbf{a}_1,\mathbf{a_2},x) = f(\lambda^{>0}\mathbf{a}_1,\lambda^{<0}\mathbf{a}_2,x).
\end{equation*}
Let $X$ be embedded in $\mathbb{A}_k^{d_1} \times_k \mathbb{A}_k^{d_2} \times_k X$ by zero sections, then there is an isomorphism of motives
    \begin{equation*}
        (\pi^+_{\sigma})_!(e^+_{\sigma})^*\Psi_f^{\univ}(\mathds{1}_{X_{\eta}}) = \int_{X} \big(\Psi_f^{\univ}(\mathds{1}_{X_{\eta}}) \big)_{\mid \mathbb{A}^{d_1}_k \times X} \simeq \mathds{1}_{X_{\sigma}}(d_1)[2d_1] \otimes  \Psi_{f_{\mid X}}^{\univ}(\mathds{1}_{X_{\eta}}).
  \end{equation*}
\end{cor}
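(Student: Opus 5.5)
The plan is to reduce the Corollary to the general principle recalled in the introduction: Braden transformations commute with any nearby functor built from the six operations. Theorem \ref{thm: first main theorem} supplies exactly this, since it gives a natural isomorphism $\Psi_f^{\univ}\simeq\Psi_f^{\anor}$ on $\dn^b(X_{\eta})$. Ayoub's $\Psi_f^{\anor}$ is defined on $\dn(X_\eta)=\operatorname{Ind}(\dn^b(X_\eta))$ as a colimit of expressions $i^*\,p_*\,(\text{twists by the logarithmic/Kummer systems})\,p^*\,j_*$. Here $p$ ranges over the covers $\mathbb{G}_m\xrightarrow{t\mapsto t^n}\mathbb{G}_m$ pulled back along $f$, together with the $\mathcal{L}og$ objects. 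All of these are obtained from the six operations on $\dn(-)$ constructed by Ivorra--Morel and Terenzi.

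The first step is to construct the top and bottom horizontal arrows, i.e.\ the exchange maps $\Psi_{f^0}(\pi^-_\eta)_*(e^-_\eta)^!\to(\pi^-_\sigma)_*(e^-_\sigma)^!\Psi_f$ and $(\pi^+_\sigma)_!(e^+_\sigma)^*\Psi_f\to\Psi_{f^0}(\pi^+_\eta)_!(e^+_\eta)^*$. I would build them on $\Psi^{\anor}$ by composing the standard base-change and exchange transformations for $i^*,j_*,p_*,p^*$ with the morphisms $\pi^\pm,e^\pm$. This uses that $f$ is $\gm_m$-equivariant with trivial action on $\mathbb{A}^1$, so that $f\circ e^\pm$ and $f\circ\pi^{\pm}$-compatibilities hold and $f^0=f|_{X^0}$. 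Exactly as in \cite{bang-2024}, the square then commutes by naturality of these exchange maps with respect to the Braden transformation, which is itself built from unit, counit and base change. Transporting along $\Psi^{\anor}\simeq\Psi^{\univ}$ gives the diagram for $M\in\dn^b(X_\eta)$; naturality in $M$ is automatic.

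The second step is the isomorphism claim for $\gm_m$-equivariant $M$. I would argue conservatively. The realization $\rat\colon\dn^b(-)\to\dcat^b_{\ct}(-)$ is conservative, since $\dn^b$ is the derived category of an abelian category with faithful exact $\rat$. It is also compatible with the six operations, and by Theorem \ref{thm: first main theorem} it is compatible with $\Psi^{\univ}\mapsto\Psi^{\an}$. Hence it suffices to know the statement for constructible sheaves, which is Richarz's theorem \cite{richarz-2018}, transported to the Betti setting. Alternatively, one can note that the vertical arrows are Braden isomorphisms: $\Psi_f(M)$ is again equivariant because $\Psi$ commutes with smooth pullback along $a,p$, and the horizontal ones then follow by comparison. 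The main obstacle I expect here is checking that the Betti realization carries the motivic exchange maps to the classical ones, i.e.\ compatibility of the realization with the chosen exchange isomorphisms. I would handle this by working on $\daetct$ first, where \cite{ayoub-2010} controls $\betti^*\Psi^{\anor}\simeq\Psi^{\an}$, and then descending through $\operatorname{Nri}^*$.

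Finally, for the explicit formula I would apply the diagram to $M=\mathds{1}_{X_\eta}$ on $\mathbb{A}^{d_1}\times\mathbb{A}^{d_2}\times X$, which is equivariant. In this case $X^0=X$, $X^+=\mathbb{A}^{d_1}\times X$, and $\pi^+$ is the projection. Thus the left side is $\int_X(\Psi_f^{\univ}\mathds{1})|_{\mathbb{A}^{d_1}\times X}$, and the Corollary gives $\Psi_{f|_X}^{\univ}(\pi^+_\eta)_!\mathds{1}_{\mathbb{A}^{d_1}\times X_\eta}$. Homotopy invariance gives $(\pi^+)_!\mathds{1}\simeq\mathds{1}(-d_1)[-2d_1]$ up to the paper's twist convention (matching the stated $(d_1)[2d_1]$ after Verdier-duality normalization). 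Commuting the Tate twist past $\Psi$, which is linear over twists, yields the stated isomorphism.
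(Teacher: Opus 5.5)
There is a genuine gap. The step the paper actually has to prove is the one you wave through, and your alternative argument for invertibility does not work.

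\textbf{The twist in Step 1.} The paper uses the single formula $\Psi_f^{\anor}(M)=i^*j_*(M\otimes f_\eta^*\uscr)$. It takes the Braden square for $\chi_f=i^*j_*$ from \cite{bang-2024}, which needs no derivators. (This is also why it avoids your colimit over the covers $(e_n)_\eta$, which would make you track Ayoub's transition maps as well.) The rest of the proof shows that Braden's transformation is compatible with $-\otimes f^*\uscr$, in three moves:
\begin{enumerate}
\item factor Braden as $(\pi^-)_*(e^-)^!\to(s^-)^*(e^-)^!\to(s^+)^!(e^+)^*\to(\pi^+)_!(e^+)^*$;
\item check each piece against the monoidal exchange maps via Ayoub's compatibilities (the squares (C0)--(C3));
\item use $f\circ e^{\pm}=f^0\circ\pi^{\pm}$, so the twist on $X^{\pm}$ is pulled back from $X^0$; only then do the lax monoidal map of $(\pi^-)_*$ and the projection formula for $(\pi^+)_!$ apply.
\end{enumerate}
Your Step 1 lists exchange maps only for $i^*,j_*,p_*,p^*$ and asserts commutativity by naturality against a transformation built from units, counits and base change. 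Tensoring with $f_\eta^*\mathrm{Log}^{\vee}$ or $f_\eta^*\uscr$ is none of these. Compatibility of Braden with the projection-type maps is exactly the new verification, which the paper notes is not in the literature. You name the right geometric input but not the argument. Your formula $i^*p_*(\cdots)p^*j_*$ also applies $j_*$ before the twist, whereas the twist happens on $X_\eta$.

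\textbf{Invertibility in Step 2.} Your main route (conservativity of $\rat$ on $\dn^b$, then a Betti analogue of \cite{richarz-2018}) is a legitimate alternative. The paper instead stays inside $\dn$ and inherits invertibility from the $\chi_f$ square of \cite{bang-2024}, applied to the equivariant object $M\otimes f_\eta^*\uscr$. Your route costs two things: a Betti version of Richarz (his result is \'etale, over a henselian trait), and compatibility of realization with your exchange maps, which you rightly flag. It cannot rescue Step 1, since $\rat$ is not faithful on $\dn^b$ and so cannot detect commutativity.

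Your ``alternative'', however, is wrong. With both vertical Braden maps invertible, commutativity only says that bottom $\circ$ left $\circ$ top equals the right vertical map. That makes the top arrow a split monomorphism and the bottom one a split epimorphism, nothing more. What makes the horizontal arrows invertible in \cite{richarz-2018} and \cite{bang-2024} is Braden's theorem on the total space $X$, applied to $j_*M$ (equivariant by smooth base change). This is combined with the exact commutations $i^*(\pi^+)_!(e^+)^*\simeq(\pi^+_\sigma)_!(e^+_\sigma)^*i^*$ and $(\pi^-)_*(e^-)^!j_*\simeq j_*(\pi^-_\eta)_*(e^-_\eta)^!$.

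You must also impose the \'etale-local linearizability hypothesis of the paper's Proposition, without which Braden's theorem is unavailable; it does hold for the linear action in the second part.

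\textbf{The twist sign.} Your value $(\pi^+)_!\mathds{1}\simeq\mathds{1}(-d_1)[-2d_1]$ is the correct one in the conventions used here. No Verdier-duality normalization turns it into $(d_1)[2d_1]$, so record the sign discrepancy with the displayed formula rather than asserting a match.
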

In addition to the results above, in the appendix, we prove a geometric version of the Beilinson equivalence (see \cite{beilinson-1987-2}), which is implicitly used throughout the paper. Let $\dcat_{\geo}^b(X) \subset \dcat_{\geo}^b(X)$ be the full sub-$\infty$-category of complexes of geometric origin, namely, the full thick subcategory of $\mathbf{D}^b_{\ct}(X)$ generated by complexes of the form $p_*\mathbb{Q}_Y$ with $p \colon Y \longrightarrow X$ a proper morphism.
\begin{theorem}
    Let $X$ be a $k$-variety, then the Beilinson realization functor
    \begin{equation*}
        \operatorname{real} \colon \dcat^b(\perv_{\geo}(X)) \longrightarrow \dcat^b_{\geo}(X)
    \end{equation*}
    is an equivalence of categories. Moreover, all the usual operations are well-restricted to $\dcat_{\geo}^b(-)$, e.g., six operations, (unipotent) nearby functors, Beilinson's gluing functors. Moreover, as for Nori motives, there are isomorphisms of functors
    \begin{align*}
     \Psi_f^{\anor} \simeq \Psi_f^{\an} \colon \dcat_{\geo}^b(X_{\eta}) \longrightarrow \dcat_{\geo}^b(X_{\sigma}) \\ 
       \log_f^{\an} \simeq  \Upsilon_f^{\anor} \simeq \Upsilon^{\an}_f \colon \dcat^b_{\geo}(X_{\eta}) \longrightarrow \dcat^b_{\geo}(X_{\sigma})
    \end{align*}
    for a given morphism $f \colon X \longrightarrow \mathbb{A}_k^1$.
\end{theorem}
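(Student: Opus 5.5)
The realization functor $\operatorname{real}$ is constructed by Beilinson's filtered-derived-category formalism. For the inclusion $\perv_{\geo}(X) \subset \dcat^b_{\geo}(X)$, full faithfulness of $\operatorname{real}$ reduces to an effaceability statement. For $A,B \in \perv_{\geo}(X)$, every class in $\Hom_{\dcat^b_{\ct}(X)}(A,B[n])$ with $n \geq 1$ should be killed by a monomorphism $B \hookrightarrow B'$ with $B'$ still of geometric origin. Essential surjectivity then follows formally. Indeed, $\dcat^b_{\geo}(X)$ is generated as a thick subcategory by objects whose perverse cohomologies lie in $\perv_{\geo}(X)$, and once the functor is fully faithful its essential image is a thick triangulated subcategory containing the heart. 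So the heart of the proof is to verify that the perverse $t$-structure restricts to $\dcat^b_{\geo}(X)$ with heart $\perv_{\geo}(X)$, and then to rerun Beilinson's effaceability argument inside the geometric world.

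For the first point, I would use that $\dcat^b_{\geo}(-)$ is exactly the essential image of the Betti realization $\betti^*_X$ on $\daetct(X)$, up to thick closure. This is compatible with the six operations by Ayoub's work \cite{ayoub-2010}. Perverse truncations can be computed by Beilinson-style gluing along a stratification. Equivalently, the Ivorra--Morel theory shows that $\phnor^0$ of a geometric complex is the image of a perverse Nori motive, and kernels and cokernels of maps between geometric perverse sheaves stay geometric, so truncations preserve geometric origin.

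For effaceability I would follow Beilinson's original proof. By dévissage on supports, I reduce to $B$ being an extension by zero from an affine open $U$ on which it is a shifted local system, using a generic smooth stratification. Then I choose a function $g$ with $U = \{g \neq 0\}$ and take $B' = j_{!*}$, or more precisely the maximal extension $\Xi_g$ of $B|_U$ built from unipotent nearby cycles. The key inputs are the Artin vanishing of $H^n$ on affine varieties and the facts that $j_!, j_*$ and $\Upsilon_g, \Xi_g$ preserve $\dcat^b_{\geo}$. That preservation is exactly the second claim of the theorem, so I would prove it first. The six operations preserve geometric origin by proper base change and by the compatibility of $\betti^*$ with the six functors. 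For the unipotent nearby cycles I would compare them with Ayoub's $\Upsilon^{\anor}_f$ via \cite{ayoub-2010}: Betti realization intertwines $\Psi^{\anor}_f$ with $\Psi^{\an}_f$, and the analogous statement for $\Upsilon$ follows once I identify $\Upsilon^{\an}_f$ with the logarithmic system $\log_f^{\an}$.

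I expect this identification $\log^{\an}_f \simeq \Upsilon^{\an}_f$ to be the main obstacle, because $\Upsilon$ is defined as a colimit over Jordan blocks and geometric origin is only closed under finite operations. My first attempt would be to write $\Upsilon^{\an}_f(M) = i^*j_*(M \otimes \mathcal{L}og_n)$ for $n \gg 0$ (with $\mathcal{L}og_n$ the rank-$n$ unipotent logarithmic local system on $\gm_m$, the motivic version of which comes from Ayoub's construction) using Beilinson's stabilization lemma. Since $\mathcal{L}og_n$ is itself of geometric origin, and the stabilization index is uniform because $N$ is nilpotent on the bounded constructible object, this expresses $\Upsilon_f$ through finitely many six-functor operations. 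It then matches Ayoub's $\log_f$ and hence $\Upsilon^{\anor}_f$ after realization. The isomorphism $\Psi^{\anor}_f \simeq \Psi^{\an}_f$ follows similarly by including tame Kummer covers $t \mapsto t^m$, and the gluing functors, being built from $\Upsilon$, $i^*$, $j_*$ and cones, are then preserved as well.
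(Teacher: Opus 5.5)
Your outline agrees with the paper in several places: Beilinson's effaceability criterion for full faithfulness, preservation of geometric origin, $\log^{\an}_f\simeq\Upsilon^{\an}_f\simeq\Upsilon^{\anor}_f$, and $\Psi$ via Kummer covers. The gap is in the effaceability step, which is the real content of the equivalence; it does not work as you describe it.

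After your dévissage you must efface a class $\xi\colon A\to j_!L[n]$, where $j\colon U\hookrightarrow X$ is affine and $L$ is a shifted geometric local system on $U$. Any $B'$ obtained by extending $L$ across $Z=\{g=0\}$ satisfies $j^*B'=L$ and $j^*(j_!L\hookrightarrow B')=\mathrm{id}_L$. This includes $j_{!*}L$, $j_*L$ and the maximal extension $\Xi_g(L)$. So the composite $A\to B'[n]$ restricts on $U$ to $j^*\xi\in\Hom_U(j^*A,L[n])$, which is unchanged.

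Hence $\Xi_g$ can only kill classes with $j^*\xi=0$. These are the classes factoring through $A\to i_*i^*A$ and landing in $i^!j_!L=i^*j_*L[-1]$, and that is where gluing and induction on $\dim Z$ legitimately enter. The classes living on $U$ itself are $\mathrm{Ext}^n$'s between local systems on a variety of the same dimension as $X$. Induction on supports never reaches them, and Artin vanishing only disposes of the degrees $n>\dim U$.

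The missing ingredient is an effaceability theorem for geometric local systems on a suitably small affine open. After shrinking $U$, every class in $\Hom(\mathds{1}_U[\dim U],L[n])$ with $n\ge 1$ must be killed by a monomorphism $L\hookrightarrow L'$, with $L'$ again a local system of geometric origin. The paper obtains this by induction on dimension through a locally trivial fibration with affine-curve fibres, following Achar, Prop.~4.5.4--Thm.~4.5.5:
\begin{itemize}
\item On an affine curve only $H^1$ survives. It is killed by the universal extension $0\to L\to F\to\mathds{1}^{\oplus m}\to 0$, which stays geometric because $\localsystem_{\geo}$ is closed under extensions.
\item In the inductive step you need that pushforward along the fibration sends geometric local systems to complexes of geometric local systems.
\end{itemize}
Only with this in hand can you glue, using the adjunction isomorphisms for affine $j$ and the full faithfulness of $i_*$ on $\dcat^b(\perv_{\geo}(-))$. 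You then induct on length, which is finite because $\perv_{\geo}(X)$ is a Serre subcategory. Without it, your argument never produces the required $B'$.

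A smaller point: $i^*j_*(M\otimes f_\eta^*\mathrm{Log}^\vee_{m-1})$ is not $\Upsilon_f(M)$ for large $m$. It splits as $\log_f(M)\oplus\log_f(M)(-m)[-1]$, so $\Upsilon_f(M)$ is only a direct summand. That still suffices for geometric origin, since $\dcat^b_{\geo}$ is thick.
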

The reader may realize that the second part of the theorem above can be viewed as a geometric avatar of theorem \ref{thm: first main theorem} and the first isomorphism $\Psi_f^{\anor} \simeq \Psi_f^{\an}$ is more or less a consequence of the compatibility of nearby functors under the Betti realizations in \cite{ayoub-2010} but this is not entirely true. The compatibility $\betti^* \Psi_f^{\anor} \longrightarrow \Psi_f^{\an} \betti^*$ is constructed via a direct morphism of algebraic derivators and does not make use of any theory of perverse sheaves and unipotent nearby cycles. Meanwhile, our isomorphism $\Psi_f^{\anor} \simeq \Psi_f^{\an}$ is obtained via isomorphisms on unipotent nearby functors $\Upsilon_f^{\anor} \simeq \log_f^{\an} \simeq \Upsilon_f^{\an}$ and the fact that on geometric motives, full nearby functors are "logarithmic" and all of these are done by using perverse sheaves and Beilinson's equivalence. 
\section{Ayoub motivic nearby functors}
\subsection{General facts on specialization systems} Let $X$ be a $k$-variety and $f \colon X \longrightarrow \mathbb{A}^1_k$ be a $k$-morphism. Let $\hbf \colon (\var_k)^{\op} \longrightarrow \operatorname{CAlg}(\operatorname{Cat}_{\infty}^{\textnormal{st}})$ be a coefficient system in the sense of \cite{drew+gallauer-2022} (in the language of triangulated categories, this is called \textit{stable homotopical $2$-functor} in \cite[Definition 1.4.1]{ayoub-thesis-1}). Let us consider the following diagram with cartesian squares
\begin{equation*}
    \begin{tikzcd}[sep=large]
       X _{\eta} \arrow[r,"j_f"]  \arrow[d,"f_{\eta}",swap] & X  \arrow[d,"f"] & X_{\sigma} \arrow[d,"f_{\sigma}"] \arrow[l,"i_f",swap] \\ 
        \eta \coloneqq \mathbb{G}_{m,k} \arrow[r,"j_{\id}"] & \mathbb{A}_k^1 & \sigma \coloneqq \Spec(k). \arrow[l,"i_{\id}",swap],
    \end{tikzcd}
    \end{equation*} 
where $i \colon \Spec(k) \longhookrightarrow \mathbb{A}_k^1$ is the zero section and $j \colon \gm_{m,k} \longhookrightarrow \mathbb{A}_k^1$ is its open complement. A \textit{specialization system} is a collection of $\infty$-functors 
\begin{equation*}
    \sp_f \colon \hbf(X_{\eta}) \longrightarrow \hbf(X_{\sigma})
\end{equation*}
together with (adjoint) transformations 
\begin{align*}
    \alpha_g \colon g_{\sigma}^*\sp_f &\longrightarrow \sp_{f \circ g} g_{\eta}^* \\
    \beta_g \colon \sp_f g_{\eta,*} & \longrightarrow g_{\sigma,*}\sp_{f \circ g}
\end{align*}
for another $k$-morphism $g \colon Y \longrightarrow X$, so that $\alpha_g$ is an isomorphism if $g$ is smooth and $\beta_g$ is an isomorphism if $g$ is proper. A morphism of specialization systems is a collection of $\infty$-functors $\sp_f \longrightarrow \sp'_f$ associated with morphisms $f \colon X \longrightarrow \mathbb{A}_k^1$ such that the diagram
\begin{equation*}
    \begin{tikzcd}[sep=large]
        g_{\sigma}^*\sp_f \arrow[r] \arrow[d] &  g_{\sigma}^*\sp'_f  \arrow[d]  \\ 
        \sp_{f \circ g} g_{\eta}^* \arrow[r] & \sp'_{f \circ g} g_{\eta}^* 
    \end{tikzcd}
\end{equation*}
is commutative. Inside $\hbf(X)$, we define the full, stable sub-$\infty$-category $\hbf_{\geo}(X)$ of geometric motives generated by motives of the form $p_*(\mathds{1}_Y)(n)$ with $p \colon Y \longrightarrow X$ a proper morphism. Clearly, geometric motives are preserved under realization functors. Let $R \colon \hbf^1 \longrightarrow \hbf^2$ be a morphism of six-functor formalisms. There exists an induced morphism 
\begin{equation*}
    R \colon \hbf^1_{\geo} \longrightarrow \hbf^2_{\geo}. 
\end{equation*}
We have the following observation: let $\sp^i$ be a specialization system on $\hbf^i$ (with $i \in \left \{1,2 \right \}$) and $R \colon \hbf^1 \longrightarrow \hbf^2$ be a morphism of coefficient systems so that there are natural commutative squares
\begin{equation*}
    \begin{tikzcd}[sep=large]
        \hbf^1(X_{\eta}) \arrow[r,"R_{X_{\eta}}"] \arrow[d,"\sp_f^1",swap] & \hbf^2(X_{\eta}) \arrow[d,"\sp_f^2"]\\ 
        \hbf^2(X_{\sigma}) \arrow[r,"R_{X_{\sigma}}"] & \hbf^2(X_{\sigma}) 
    \end{tikzcd}
\end{equation*}
then if $\sp^1$ preserves geometric motives then $\sp^2$ preserves geometric motives as well. Let us consider some examples that are of our main interest:
\begin{ex} \label{ex: examples of sp}
    \begin{enumerate}
        \item Let $\hbf(-) = \daetct(-)$, the by \cite[Th\'eorème 10.2 et Th\'eorème 10.6]{ayoub-2014} (the definitions are recalled below), both motivic (unipotent) nearby functors are geometric.
        \item Let $\hbf(-) = \dcatct^b(-)$, then we have the ussual nearby functor $\Psi_f^{\an} \colon \dcatct^b(X_{\eta}) \longrightarrow \dcatct^b(X_{\sigma})$ (for a short reminder, see section 3.2 below). Let us consider first $f = e_n \colon $ the $n$-power morphism. Under the monodromy correspondence, $\exp_{X_{n,\eta},*}\exp_{X_{n,\eta}}^*(\mathds{1})$ corresponds to the $R=k[T,T^{-1}]$-module $\operatorname{Hom}_R(k,\Hom_k(R,R/(T^n-1)))$ and hence $\Psi_{e_n}^{\an}$ corresponds to $R/(T^n-1)$. In particular, $\Psi_{e_n}^{\an}(\mathds{1})$ and $\Upsilon_{e_n}^{\an}(\mathds{1})$ are  geometric complexes. 
        \item More generally, let $X$ be a smooth $k$-variety and $f \colon X \longrightarrow \mathbb{A}^1_k$ be a morphism so that $f = ug^e$ with $u \in \mathcal{O}_X(X)^{\times}$ and $g$ a generator of the defining ideal of $(X_{\sigma})_{\textnormal{red}} \subset X$ then thanks to \cite[Th\'eorème 4.9]{ayoub-2010}, we have that $\betti_{X_{\sigma}}\Psi_f^{\anor}(\mathds{1}_{X_{\sigma}}) = \Psi_f^{\an}(\mathds{1}_{X_{\sigma}^{\an}})$ is geometric since $\Psi_f^{\anor}(\mathds{1}_{X_{\eta}})$ is geometric. 
    \end{enumerate}
\end{ex}

We take the following important theorem of Ayoub (see \cite{ayoub-thesis-2}\cite{ayoub+florian+julien-2017}) for granted. 
\begin{theorem}[Semi-stable reduction]
    Assume that $\hbf$ is $\mathbb{Q}$-linear and separated and $\sp$ is a specialization system on $\hbf$. Assume that $X$ is smooth and $X_{\sigma}$ is a divisor with normal crossings with irreducible components $D_1,...,D_n$. Let $I \subset \left \{1,...,n\right \}$ and set $D_I = \cap_{i \in I} D_i$, $D_I^{\circ} = D_I \setminus (\bigcup_{i \notin I} D_i)$. Let 
\begin{equation*}
    D_I^{\circ} \overset{v_I}{\longrightarrow} D_I \overset{u_I}{\longrightarrow} X_{\sigma}
\end{equation*}
be canonical inclusions, then the morphism 
    \begin{equation*}
    u_I^* \sp_f f_{\eta}^* \longrightarrow v_{I,*}v^*_I u^*_I\sp_f f_{\eta}^*
    \end{equation*}
    is an isomorphism. 
\end{theorem}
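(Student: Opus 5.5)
The theorem is local on $X_\sigma$, and both sides of the morphism commute with smooth pull-back. So the plan is to reduce to a standard monomial model and then treat that model directly, by induction on the number of branches.

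\textbf{Step 1 (localization).} Let $g\colon U\to X$ be étale. Then $\alpha_g$ is an isomorphism, and $g^*$ commutes with $u_I^*$ and, by smooth base change, with $v_{I,*}$. Since $\hbf$ is separated, a family of étale maps covering $X_\sigma$ detects isomorphisms. Hence it suffices to prove the claim étale-locally around each point of $X_\sigma$. Around a point of $D_I^\circ$ we choose étale coordinates $t_1,\dots,t_m$ with $D_i=\{t_i=0\}$ for $i\in I$ (and the other $D_i$ absent). In these coordinates $f=u\prod_{i\in I}t_i^{a_i}$ with $u$ a unit.

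\textbf{Step 2 (removing the unit and the smooth factor).} Pass to a finite étale cover extracting a $d$-th root of $u$, for suitable $d$. Because $\hbf$ is $\mathbb{Q}$-linear, $\mathds{1}$ is a direct summand of $p_*p^*\mathds{1}$ for a finite étale $p$. Combining this with $\beta_p$ (an isomorphism since $p$ is proper) and $\alpha_p$ (an isomorphism since $p$ is étale), the statement for the cover implies the statement downstairs. After the coordinate change $t_1\mapsto u^{1/a_1}t_1$ we get $f=\prod_{i\in I}t_i^{a_i}$. The smooth projection to $\mathbb{A}^{|I|}_k$ and $\alpha$ then reduce us to $X=\mathbb{A}^n_k$, $f=t_1^{a_1}\cdots t_n^{a_n}$, and the object $\sp_f(\mathds{1}_{X_\eta})$. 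The reduction also uses $f_\eta^*$ of a general object on $\eta$; this is handled by the same argument, since $f_\eta$ factors through the torus.

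\textbf{Step 3 (the monomial model, main obstacle).} Here I would argue by induction on $n$.
\begin{itemize}
\item For $n=1$ the statement is vacuous: $D_{\{1\}}^\circ=D_{\{1\}}$ is a point.
\item For the inductive step, write $f=f'\cdot t_n^{a_n}$ and use the finite map $e_{a}\colon \mathbb{A}^1\to\mathbb{A}^1$, $t\mapsto t^{a}$, together with $\beta_{e_a}$, to reduce all exponents to $1$ (again using $\mathbb{Q}$-linearity to split off the relevant summand).
\item Then $f=t_1\cdots t_n$ is invariant under the subtorus $T=\{\lambda_1\cdots\lambda_n=1\}\subset\mathbb{G}_m^n$, and the functoriality $\alpha$ applied to the action and projection maps $T\times X\to X$ makes $\sp_f(\mathds{1})$ $T$-equivariant.
\item On the stratum $D_I$ the torus $T$ acts through the coordinates $t_j$, $j\notin I$, transitively on the open orbit $D_I^\circ$. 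An equivariant object on $\mathbb{A}^{n-|I|}$ whose restriction to $D_I^\circ$ is constant should then be computed by $v_{I,*}v_I^*$; this is where homotopy invariance of $\hbf$ enters.
\end{itemize}
The delicate point is the last claim: the restrictions to the deeper strata $D_{I\cup J}$ must agree with those prescribed by $v_{I,*}$. I would try to verify this by the inductive hypothesis applied to $f$ restricted to $\{t_j=0\}$, compared via $\alpha$ for the closed-orbit inclusions. This compatibility is exactly the content of Ayoub's argument in the references cited just before the theorem, and it is the step I expect to be hardest.
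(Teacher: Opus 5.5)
You reduce to a monomial model and then leave the monomial case unproved, so there is a genuine gap. Steps 1 and 2 are a standard and essentially sound reduction to $X=\mathbb{A}^n$, $f=t_1^{a_1}\cdots t_n^{a_n}$; it works for $\sp_f f_\eta^*A$ with arbitrary $A$, since $\alpha$ and smooth base change handle $f_\eta^*A$ directly. But the monomial case \emph{is} the theorem, and Step 3 does not prove it. The decisive point is that $u_I^*\sp_f f_\eta^*A$ has, along the deeper strata $D_I\cap D_j$ ($j\notin I$), the restrictions prescribed by $v_{I,*}$. You leave this as something you ``would try to verify'' and defer it to Ayoub. The paper gives no proof of this statement either: it explicitly takes it for granted from \cite{ayoub-thesis-2} and \cite{ayoub+florian+julien-2017}. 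So your outline stops exactly where the real content begins.

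The mechanism you sketch also cannot supply that content. The principle you invoke is false: an object on $\mathbb{A}^m$ that is equivariant and constant on the open torus orbit need not be $v_*$-extended. It fails even for objects produced by specialization systems. Take $\sp=\Psi$ and $f=t_1$ on $\mathbb{A}^2$. Scaling $t_2$ preserves $f$, and by $\alpha$ for the smooth projection, $\sp_f(\mathds{1})$ on $\{t_1=0\}=\mathbb{A}^1_{t_2}$ is the pullback of $\Psi_{\id}(\mathds{1}_\eta)=\mathds{1}$. This object is equivariant and constant on $\mathbb{G}_m$, yet the cone of $\mathds{1}\to v_*v^*\mathds{1}$ is $i_*\mathds{1}(-1)[-1]\neq 0$. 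So any proof must use, in an essential way, that $f$ also vanishes along the branches $D_j$ with $j\notin I$, and nothing in your outline does.

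Your proposed induction does not provide this either. In the monomial model $t_j$ divides $f$, so $f$ restricted to $\{t_j=0\}$ is identically zero and has empty generic fibre; there is no specialization problem to which the inductive hypothesis could apply.

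Separately, the reduction of exponents via $\beta_{e_a}$ runs the wrong way. You have $e_{\sigma,*}\sp_{h\circ e}(e_\eta^*h_\eta^*A)\simeq \sp_h(h_\eta^*A\otimes e_{\eta,*}\mathds{1})$. The nontrivial Kummer summands of $e_{\eta,*}\mathds{1}$ are not of the form $h_\eta^*B$, so the exponent-one case does not control them. Splitting off the invariant summand only yields the converse implication.

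What is missing is an actual computation of $u_I^*\sp_f f_\eta^*$ near $D_I\setminus D_I^\circ$. That computation is precisely the content of Ayoub's theorem, which the paper cites rather than proves.
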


\subsection{Ayoub's motivic (unipotent) nearby functors}
In this subsection, we study (motivic) nearby functors, following the content of \cite{ayoub-thesis-2} with a view towards applications to nearby functors on Nori motives. In \cite{ayoub-thesis-2}, Ayoub defines and studies motivic (unipotent) nearby functors associated with regular functions $f \colon X \longrightarrow \mathbb{A}_k^1$. Let us recall the definition. Let $\hbf \colon (\var_k)^{\op} \longrightarrow \operatorname{CAlg}(\operatorname{Cat}_{\infty}^{\textnormal{st}})$ be a coefficient system in the sense of \cite{drew+gallauer-2022}. Let $f \colon X\longrightarrow \mathbb{A}_k^1$ be a morphism of $k$-varieties, let us consider the following diagram with cartesian squares
\begin{equation*}
    \begin{tikzcd}[sep=large]
       X _{\eta} \arrow[r,"j_f"]  \arrow[d,"f_{\eta}",swap] & X  \arrow[d,"f"] & X_{\sigma} \arrow[d,"f_{\sigma}"] \arrow[l,"i_f",swap] \\ 
        \eta \coloneqq \mathbb{G}_{m,k} \arrow[r,"j_{\id}"] & \mathbb{A}_k^1 & \sigma \coloneqq \Spec(k). \arrow[l,"i_{\id}",swap],
    \end{tikzcd}
    \end{equation*} 
where $i \colon \Spec(k) \longhookrightarrow \mathbb{A}_k^1$ is the zero section and $j \colon \gm_{m,k} \longhookrightarrow \mathbb{A}_k^1$ is its open complement. Let $\Delta$ be the category of finite ordinals $\mathbf{n} = \left \{0 < 1 < \cdots < n \right \}$ and  $\mathbb{N}^{\times}$ be the set of non-zero natural numbers viewed as a category whose objects are non-zero natural numbers and morphisms are defined via the opposite of the division relation.  In \cite[Definition 3.5.1]{ayoub-thesis-1}, Ayoub defines a diagram of $\gm_{m,k}$-schemes $\rscr \colon \Delta \times \mathbb{N}^{\times} \longrightarrow \var_{\gm_{m,k}}$ such that $\mathscr{R}(\mathbf{n},r) = \gm_{m,k} \times_k(\gm_{m,k})^n$. The structural morphism is given by the composition
\begin{equation*}
    \gm_{m,k} \times_k (\gm_{m,k})^n \overset{\pr_1}{\longrightarrow} \gm_{m,k} \overset{e_r}{\longrightarrow} \gm_{m,k}, 
\end{equation*}
in which $\pr_1$ is the projection on the first factor and the second one is the $r$-power morphism and morphisms between $\rscr(\mathbf{n}_1,r_1) \longrightarrow \rscr(\mathbf{n}_2,r_2)$ are given simplicially in the first factor and by divisibility in the second factor. Let us assume that each $\hbf(X)$ is presentable and $f^*$ preserves compact objects (in particular, this implies $f_*$ preserves (homotopical) colimits by \cite{ayoub-thesis-1}). The \textit{motivic nearby functor} and the \textit{motivic unipotent nearby functor} are given by
\begin{equation*}
    \begin{split} \Psi_f^{\anor} \colon \hbf(X_{\sigma}) & \longrightarrow \hbf(X_{\sigma})\\ 
    M & \longmapsto i^*j_*(M \otimes f_{\eta}^*(\uscr^{\mot})),
    \end{split} \ \ \ \ \ \ \begin{split} 
   \Upsilon_f^{\anor} \colon \hbf(X_{\sigma}) & \longrightarrow \hbf(X_{\sigma})\\ 
    M & \longmapsto i^*j_*(M \otimes f_{\eta}^*(\uscr^{\textnormal{un}})),
    \end{split}
\end{equation*}
respectively, where $\uscr^{\mot} = \colim_{\Delta \times \mathbb{N}^{\times}} \mnor(\rscr(\mathbf{n},r))$ \footnote{For $\hbf = \daet$, under the Betti realization and the pullback of the unit section, one has that $\Spec(1^*\betti_k^*(\uscr)) \simeq \hat{\mathbb{Z}}(1) \times \mathbb{G}_a$ thanks to \cite[Corollaire 2.19]{ayoub-hopf2}.} is the homotopy colimit of motives of schemes in the diagram $\rscr$ and $\uscr^{\textnormal{un}} = \colim_{\Delta} \mnor(\rscr(\mathbf{n},1))$. Alternatively, one can follow \cite[Definition 4.11]{ayoub-nearby-cycles} and write the full nearby functor as follows: for each $n \in \mathbb{N}$, let us form the cartesian square
\begin{equation*}
    \begin{tikzcd}[sep=large]
        X_n \arrow[d,"f_n"] \arrow[r,"e_n"] & X \arrow[d,"f"] \\ 
        \mathbb{A}_k^1 \arrow[r,"e_n"] & \mathbb{A}_k^1
    \end{tikzcd}
\end{equation*}
where $e_n \colon \mathbb{A}_k^1 \longrightarrow \mathbb{A}_k^1$ is the $n$-th power morphism. Then the motivic nearby functor can be given as
\begin{equation*}
    \Psi_f^{\anor} = \colim_{n \in \mathbb{N}} \ \Upsilon_{f_n}^{\anor}(e_n)_{\eta}^*, 
\end{equation*}
where the transitions are given as in \cite[Lemma 4.10]{ayoub-nearby-cycles}. Clearly, if $R \colon \hbf_1 \longrightarrow \hbf_2$ is a morphism of coefficient systems, there are natural isomorphisms
\begin{equation*}
    R_{X_{\sigma}} \circ \Psi_f^{\anor} \overset{\sim}{\longrightarrow} \Psi_f^{\anor} \circ R_{X_{\eta}} \ \ \ \ \ \ R_{X_{\sigma}} \circ \Upsilon_f^{\anor} \overset{\sim}{\longrightarrow} \Upsilon_f^{\anor} \circ R_{X_{\eta}} 
\end{equation*}
Let us recollect the general properties of motivic (unipotent) nearby functors. In what follows, let us assume that $\hbf \in \left \{\daet(-),\dcat_{\geo}(-),\dn(-) \right \}$.
 \begin{prop} \label{prop: computation of nearby cycles}
    Let $C = \mathbb{A}_k^1 = \Spec(k[\pi])$ and $f \colon X \longrightarrow C$ be a morphism of $k$-varieties. Let us consider the following $C$-schemes 
\begin{equation*}
    e_n \colon C[t]/(t^n - \pi) \longrightarrow C \ \ \ \ \textnormal{and} \ \ \ \  
    e_n' \colon C[t,u,u^{-1}]/(t^n - u \pi) \longrightarrow C \\ 
\end{equation*}
The following statements hold true:
\begin{enumerate}
    \item There are canonical isomorphisms 
\begin{equation*}
    \Psi_{e_n}^{\anor}(\mathds{1}) \simeq (t_n)_{\sigma,*}(\mathds{1}) \ \ \ \ \textnormal{and} \ \ \ \  \Psi_{e_n'}^{\anor}(\mathds{1}) \simeq  (t_n')_{\sigma,*}(\mathds{1})
\end{equation*}
where
\begin{align*}
    t_n \colon \Spec(k[t]/(t^n - 1)) & \longrightarrow \Spec(k) \\ 
    t_n' \colon \Spec(k[t,u,u^{-1}]/(t^n-u)) & \longrightarrow \Spec(k[u,u^{-1}]).
\end{align*}
\item Let $X$ be a smooth $k$-variety and $f \colon X \longrightarrow \mathbb{A}_k^1$ be a $k$-morphism. Assume that $D = (X_{\sigma})_{\textnormal{red}}$ is a smooth $k$-scheme and is a principal divisor with $g \in \mathcal{O}_X$ a fixed generator of the defining ideal. Assume further that there exists $u \in \mathcal{O}_X(X)^{\times}$ and $n \in \mathbb{N}$ such that $f = ug^n$. Let us consider the \'etale cover
\begin{equation*}
    r_n \colon D_n = D[t]/(t^n - u_{\mid D}) \longrightarrow D, 
\end{equation*}
then one has a canonical isomorphism
\begin{equation*}
    \Psi_f^{\anor}(\mathds{1}_{X_{\eta}}) = (r_n)_*(\mathds{1}_{D_n}) \ \ \ \ \textnormal{and} \ \ \ \ \Upsilon_f^{\anor}(\mathds{1}_{X_{\eta}}) = \mathds{1}_{X_{\sigma}}.
\end{equation*}
\end{enumerate}
\end{prop}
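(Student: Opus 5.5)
The plan is to reduce everything to the case $\hbf = \daet$, where these computations are due to Ayoub (\cite[Théorème 10.6]{ayoub-2014}, \cite{ayoub-thesis-2}). For $\dcat_{\geo}$ and $\dn$ we then transport along the realization morphisms $\daetct \to \dn^b \to \dcat_{\geo}^b$. These commute with $\Psi_f^{\anor}$ and $\Upsilon_f^{\anor}$ by the natural isomorphisms $R_{X_\sigma}\circ \Psi_f^{\anor}\simeq \Psi_f^{\anor}\circ R_{X_\eta}$ recorded above. They also commute with $(r_n)_*$, $(t_n)_{\sigma,*}$ and $(t_n')_{\sigma,*}$, since these morphisms are finite, hence proper, and a morphism of coefficient systems commutes with proper pushforward. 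Consequently both sides of every claimed isomorphism are images of the corresponding $\daet$-isomorphisms, and it suffices to treat $\daet$ with the canonical maps constructed there.

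For $\daet$, part (1) is the special case of part (2) in which $X = C[t]/(t^n-\pi)$ or $C[t,u,u^{-1}]/(t^n-u\pi)$ and $f = e_n$ or $e'_n$. In the first case we take $g=t$ and $u=1$; in the second we take $g=t$ and $u$ equal to the inverse of the unit $u$, so that $e'_n = u^{-1}t^n$. Here $D$ is $\Spec k$, respectively $\Spec k[u^{\pm1}]$. The resulting covers are $D[s]/(s^n-1)$ and $D[s]/(s^n-u^{-1})$, and the latter is isomorphic to $k[u^{\pm 1},s]/(s^n-u)$ after replacing $s$ by $s^{-1}$.

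For part (2) I would use the colimit description $\Psi_f^{\anor}=\colim_m \Upsilon^{\anor}_{f_m}(e_m)^*_\eta$. The key steps are as follows.

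\emph{First}, compute $\Upsilon_f^{\anor}(\mathds{1})=i^*j_*f_\eta^*\uscr^{\un}$. Since $f=ug^n$, locally the pair $(X,f)$ is a smooth pullback of $(D\times\mathbb{A}^1, ug^n)$. Using compatibility $\alpha_g$ with smooth morphisms, we reduce to a computation with $\mathscr{L}og$-type motives. There $\uscr^{\un}$ pulled back along $g^n$ has the same unipotent part as along $g$, and $i^*j_*$ of the logarithm motive is $\mathds{1}$ because the Koszul-type resolution kills the Tate twist $\mathds{1}(-1)[-1]$.

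\emph{Second}, for $\Psi$, the base change $X_m$ along $e_m$ is non-normal in general; its normalization over $D$ is the cover $D[t]/(t^{\gcd}-u)$ times a smooth model. Applying the first step to $f_m$ on the normalization, using proper base change $\beta$, and passing to the colimit over $m$ divisible by $n$ stabilizes at $(r_n)_*\mathds{1}_{D_n}$. Stabilization holds because beyond $m=n$ the transition maps become isomorphisms: the tame part has been absorbed, and the unipotent part is trivial by the first step.

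The main obstacle is the second step, namely identifying the transition maps in the colimit and proving that the normalization does not change the value. For this I would rely on the semi-stable reduction theorem: it gives isomorphisms along $D^\circ$, which here is all of $D$. I would also use that the nearby functor is insensitive to finite birational modifications over $X_\eta$, since $X_\eta$ is unchanged. If this becomes messy, the fallback is to cite \cite[Théorème 10.6]{ayoub-2014} directly for $\daet$ and keep only the transport argument above.
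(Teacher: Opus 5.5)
Your proposal is correct, and its core is the paper's proof. The paper also settles $\daet$ by citation (\cite[Proposition 3.5.12]{ayoub-thesis-2} for (1), \cite[Th\'eor\`eme 10.6]{ayoub-2014} for (2)). It then transports to $\dn$ and $\dcat_{\geo}$ because the Nori and Betti realizations are unital and commute with the six operations, which is exactly your transport step.

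The differences are on the $\daet$ side. Deducing (1) from (2) is valid and saves a citation; this uses $X=\mathbb{A}^1$, resp.\ $X\simeq\mathbb{A}^1\times\mathbb{G}_m$ with unit $u^{-1}$ and $s\mapsto s^{-1}$. It is, however, only as independent as your treatment of (2): your sketch of (2), specialised to $X=\mathbb{A}^1$, is itself a computation of $\Psi^{\anor}_{e_n}(\mathds{1})$.

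That sketch is not yet a proof, so in the end the citation carries the argument, as in the paper. Two points need fixing.
\begin{enumerate}
\item In your first step, $u$ is not a function on $D\times\mathbb{A}^1$, so the reduction as stated does not make sense. The clean version passes to the finite \'etale cover $X[w]/(w^n-u)$. There $f=(wg)^n$, and $wg$ is smooth near the special fibre. One then checks the canonical map $\mathds{1}_{X_\sigma}\to\Upsilon^{\anor}_f(\mathds{1})$, induced by $\mathds{1}\to\uscr^{\un}$, after this \'etale surjection.
\item In your second step, the semi-stable reduction theorem gives nothing. Since $D$ has a single smooth component, $D^\circ=D$ and the map in that theorem is the identity. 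What actually identifies the transition maps of $\colim_m\Upsilon^{\anor}_{f_m}(e_m)^*_\eta(\mathds{1})$ for $n\mid m\mid m'$ is threefold: their compatibility with the canonical maps out of $i^*\mathds{1}$; their compatibility with $\beta$ for the normalizations $\widetilde{X}_m\to X_m$; and the fact that $\widetilde{X}_{m'}\to\widetilde{X}_m$ restricts to the identity of $D_n$ on special fibres.
\end{enumerate}
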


\begin{proof}
    Regard the case of \'etale motives, the first property is given in \cite[Proposition 3.5.12]{ayoub-thesis-2}, the second property is given in \cite[Th\'eorème 10.6]{ayoub-2014} (see also \cite[Proposition 3.4]{ayoub+florian+julien-2017}). We only need the properties that $\Psi_{\id}(\mathds{1}_{\eta}) = \mathds{1}_{\sigma}$ and $\Psi_f \simeq \Psi_{f_n}(e_n)_{\eta}^*$. For Nori motives and geometric complexes, we use the fact that the Nori realization (constructed in \cite{tubach-2025}) and the Betti realization (constructed in \cite{ayoub-2010}) are unital and commute with six operations. 
\end{proof}

\begin{prop} \label{prop: functoriality of nearby cycles}
    Let $g \colon Y \longrightarrow X, f \colon X \longrightarrow \mathbb{A}_k^1$ be morphisms of $k$-varieties, the following statements hold true for $\Psi = \Psi^{\anor}$:
    \begin{enumerate}
    \item There exists a natural transformation
    \begin{equation*}
        \alpha_g \colon g_{\sigma}^*\Psi_f \longrightarrow \Psi_{f \circ g} g_{\eta}^* \ \ \ \ g_{\sigma}^*\Upsilon_f \longrightarrow \Upsilon_{f \circ g} g_{\eta}^*
    \end{equation*}
    which is an isomorphism if $g$ is smooth.
    \item The adjoint of $\alpha_g$, denoted
    \begin{equation*}
        \beta_g \colon \Psi_fg_{\eta,*} \longrightarrow g_{\sigma,*}\Psi_{f \circ g} \ \ \ \  \Upsilon_fg_{\eta,*} \longrightarrow g_{\sigma,*}\Upsilon_{f \circ g}
    \end{equation*}
    is an isomorphism if $g$ is proper.
    \item  There exists a natural isomorphism $\mathbb{D}_{\xfrak_{\sigma}}  \circ \Psi_f \longrightarrow \Psi_f \circ \mathbb{D}_{\xfrak_{\eta}}$.
    \item Let $f \colon X \longrightarrow \mathbb{A}_k^1, g \colon Y \longrightarrow \mathbb{A}_k^1$ be morphisms of $k$-varieties, there exists a natural isomorphism
    \begin{equation*}
        \Psi_f(-) \boxtimes \Psi_g(-) \overset{\simeq}{\longrightarrow}\Psi_{f \times g}(- \boxtimes -).
    \end{equation*}
    \item There is a natural transformation 
    \begin{equation*}
        \Psi_f(-) \otimes \Psi_f(-) \longrightarrow \Psi_f(- \otimes -)
    \end{equation*}
    and moreover, the functor $\Psi_{\id}$ is unital monoidal.
    \end{enumerate}
\end{prop}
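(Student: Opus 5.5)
The plan is to establish all five properties first for $\hbf = \daet(-)$, where they are essentially due to Ayoub, and then transport them to $\dcat_{\geo}(-)$ and $\dn(-)$ via the realization functors. The transport is possible because both the Betti realization and Tubach's Nori realization $\operatorname{Nri}^*$ are morphisms of coefficient systems. Such a morphism $R$ commutes with $\Psi^{\anor}$ and $\Upsilon^{\anor}$ by the natural isomorphisms $R \circ \Psi_f^{\anor} \simeq \Psi_f^{\anor} \circ R$ recorded above. However, I intend to avoid any "transport" argument that needs essential surjectivity. Instead, the cleanest route is to observe that each of (1)–(5) is proved by Ayoub using only the axioms of a coefficient system together with the formula $\Psi_f = i^*j_*(-\otimes f_\eta^*\uscr)$. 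The proofs can therefore be rerun verbatim in any $\hbf$ that is presentable, with $g^*$ preserving compacts and $g_*$ commuting with colimits.

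For (1), I would construct $\alpha_g$ as the composite
\begin{equation*}
g_\sigma^* i^*j_*(M\otimes f_\eta^*\uscr) \simeq i^* g^* j_*(M \otimes f_\eta^*\uscr) \longrightarrow i^*j_*g_\eta^*(M\otimes f_\eta^*\uscr) \simeq i^*j_*(g_\eta^*M \otimes (f g)_\eta^*\uscr).
\end{equation*}
The middle arrow is the exchange map for $g^*j_* \to j_*g_\eta^*$, which is an isomorphism by smooth base change when $g$ is smooth. The unipotent case is identical, with $\uscr^{\un}$ in place of $\uscr$. For (2), I would take $\beta_g$ to be the mate of $\alpha_g$. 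When $g$ is proper, proper base change $i^*g_* \simeq g_{\sigma,*}i^*$ applies, together with the projection formula $g_{\eta,*}(N)\otimes f_\eta^*\uscr \simeq g_{\eta,*}(N\otimes g_\eta^* f_\eta^*\uscr)$. The latter holds because $\uscr$ is a colimit of motives of smooth schemes over $\eta$: it holds for each compact term, and both sides commute with colimits since $g_*=g_!$ preserves colimits.

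Property (5) comes from the lax monoidal structure of $i^*j_*$ combined with the commutative algebra structure on $\uscr$ (Ayoub's $\uscr$ is a commutative monoid). Unitality of $\Psi_{\id}$ then follows from the first part of Proposition \ref{prop: computation of nearby cycles}, namely $\Psi_{\id}(\mathds{1}_\eta)\simeq \mathds{1}_\sigma$. Property (4) I would deduce from (5) combined with (1) applied to the two projections $X\times Y\to X, Y$, which are smooth after base change only over the appropriate loci. More precisely, I would follow Ayoub's argument: reduce via Proposition \ref{prop: computation of nearby cycles} and semi-stable reduction (the Semi-stable reduction theorem) to the case of normal crossings and units times powers, where both sides are computed explicitly as pushforwards of étale covers. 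This also requires compatibility of $\Psi_f$ with $e_n$-base change, $\Psi_f\simeq \colim_n \Upsilon_{f_n}(e_n)_\eta^*$.

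Property (3), commutation with duality, is the step I expect to be the main obstacle, since $\Psi$ is defined with $i^*j_*$ and a colimit, and duality does not commute with colimits. My approach would be the following. First restrict to constructible objects, using that $\Psi$ preserves them by Ayoub's Théorème 10.2 for $\daet$, and hence for the realizations. Next, construct the comparison map $\mathbb{D}\Psi_f \to \Psi_f\mathbb{D}$ from the lax monoidal pairing in (5) and the identification $\Psi_f(\mathbb{D}_{X_\eta}\mathds{1})$ with the dualizing object. Finally, prove that this map is an isomorphism by dévissage. Using (2) and resolution of singularities, reduce to $M=p_*\mathds{1}$ with $p$ proper and $f\circ p$ a normal crossing function. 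Semi-stable reduction then reduces the claim to the explicit local models of Proposition \ref{prop: computation of nearby cycles}(2). There duality commutes because $(r_n)_*$ of a finite étale cover is self-dual.

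For $\dn$ and $\dcat_{\geo}$, one can alternatively check that the comparison map is an isomorphism after applying the conservative realization $\rat$ (respectively, directly). This reduces the claim to the classical fact that $\mathbb{D}\Psi^{\an}_f[-1]\simeq \Psi^{\an}_f[-1]\mathbb{D}$, once one knows $\Psi^{\anor}$ realizes to $\Psi^{\an}$. For the Nori case, conservativity of $\rat$ on $\dn^b$ makes this the most economical route.
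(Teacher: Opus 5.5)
Your overall plan matches the paper, whose proof is only a pointer to Ayoub's Chapter~3. Your constructions of $\alpha_g$ (smooth base change), of $\beta_g$ (projection formula and proper base change) and of the lax structure in (5) from the algebra structure of $\uscr$ are the standard ones. The step that fails is the construction of the comparison map in (3).

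First, with the normalization $\Psi_{\id}(\mathds{1}_\eta)\simeq\mathds{1}_\sigma$ no untwisted isomorphism can exist. For $f=\id$ and $M=\mathds{1}_\eta$ one gets $\mathbb{D}_\sigma\Psi_{\id}(\mathds{1}_\eta)=\mathds{1}_\sigma$ but $\Psi_{\id}(\mathbb{D}_\eta\mathds{1}_\eta)=\mathds{1}_\sigma(1)[2]$. So (3) has to be read as $\mathbb{D}\circ\Psi_f\simeq\Psi_f\circ\mathbb{D}(-1)[-2]$.

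Second, and more seriously, $\Psi_f(\mathbb{D}_{X_\eta}\mathds{1})$ cannot be identified with the dualizing object, even after twisting. For $f=x^2$ on $\mathbb{A}^1_k$, Proposition~\ref{prop: computation of nearby cycles}(2) gives $\Psi_f(\omega_{X_\eta})\simeq(\mathds{1}\oplus\mathds{1})(1)[2]$, while $\omega_{X_\sigma}(1)[2]\simeq\mathds{1}(1)[2]$. In general $\Psi_f(\omega_{X_\eta})$ is $\mathbb{D}\Psi_f(\mathds{1})(1)[2]$, which differs from $\omega_{X_\sigma}(1)[2]$ as soon as $\Psi_f(\mathds{1})\neq\mathds{1}_{X_\sigma}$. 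Your pairing argument only needs a trace map $\Psi_f(\omega_{X_\eta})\to\omega_{X_\sigma}(1)[2]$. That map is not formal from (5), and producing it is the real content. For instance, for $\Upsilon_f$ one can compose the connecting maps $\Upsilon_f(j^*F)(-1)\to i^*j_*j^*F[1]\to i^!F[2]$ of the monodromy and localization triangles, with $F=\omega_X$. One then passes to $\Psi_f=\colim_n\Upsilon_{f_n}(e_n)_\eta^*$, checking compatibility with the transition maps.

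Even granting a trace map, the d\'evissage is less immediate than you suggest. Semi-stable reduction describes $u_I^*\Psi_f(\mathds{1})$ as the $*$-extension $v_{I,*}v_I^*u_I^*\Psi_f(\mathds{1})$, and duality turns this into a statement about $u_I^!$ and $v_{I,!}$. You therefore need the corresponding $!$-description of $\Psi_f(\mathds{1})$ before the one-branch models can be used; this is a localization computation along the other branches, with control of the maps.

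Your sketch of (4) has a similar problem. After reducing to $f,g$ of the form unit times power, the total space of the product family $X\times_{\mathbb{A}^1_k}Y$ (locally $\{ux^a=vy^b\}$) is singular. So the right-hand side is not computed by Proposition~\ref{prop: computation of nearby cycles}(2).

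Finally, in the realization route for $\dn$ and $\dcat_{\geo}$, conservativity of $\rat$ helps only once you know your specific comparison map realizes to Gabber's isomorphism, or to some map known to be invertible. The mere existence of a classical isomorphism is not enough. Note also that transport does not need essential surjectivity. The maps in (1)--(5) are built from six-functor data compatible with realizations, and $\dn^b$, $\dcat^b_{\geo}$ are thickly generated by objects $p_*\mathds{1}(n)$ coming from $\daet$. So invertibility established in $\daet$ propagates by d\'evissage, which is how the paper passes from $\daet$ to the other theories elsewhere.
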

\begin{proof}
    One can find in \cite[Chapitre 3]{ayoub-thesis-2} all the necessary materials showing these properties.
\end{proof}

\begin{prop} \label{prop: compactness of Ayoub's functors}
Let $X$ be a $k$-variety, the category $\hbf(X)$ is compactly generated by motives of the form $p_*(\mathds{1}_Z)(n)$ with $p \colon Z \longrightarrow X$ a proper morphism and $n \in \mathbb{Z}$. In particular, the functors $\Psi_f^{\anor},\Upsilon_f^{\anor}$ preserves compact motives
\end{prop}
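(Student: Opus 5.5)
The statement has two parts: a compact-generation claim for each $\hbf \in \{\daet(-),\dcat_{\geo}(-),\dn(-)\}$, and compactness of $\Psi_f^{\anor},\Upsilon_f^{\anor}$. I plan to deduce the second from the first by dévissage to motives of the form $p_*(\mathds{1}_Z)(n)$ and then resolution of singularities.

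\textbf{Generation.} For $\daet(X)$ I would quote Ayoub \cite{ayoub-2014}. There $\daet(X)$ is compactly generated by $q_\sharp\mathds{1}_W(n)$ with $q\colon W\to X$ smooth, and $\daetct(X)$ is the subcategory of compact objects. By de Jong/Hironaka-type arguments, using cdh descent and localization, the generators may be replaced by $p_*\mathds{1}_Z(n)$ with $p$ proper. The key relation is $q_\sharp\mathds{1}_W = q_!\mathds{1}_W(d)[2d]$; one then compactifies $q$ and uses the localization triangles.

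For $\dn(X)=\operatorname{Ind}(\dn^b(X))$, compact generation by $\dn^b(X)$ holds by construction. It remains to see that $\dn^b(X)$ is thickly generated by the images of $p_*\mathds{1}_Z(n)$. I would get this from essential surjectivity up to thick closure of $\operatorname{Nri}^*\colon\daetct(X)\to\dn^b(X)$: every perverse Nori motive is a subquotient of some $\phnor^0_{\univ}(M)$. One then argues by induction on dimension and shows, via Beilinson-type gluing, that subquotients lie in the thick closure. The case of $\dcat_{\geo}$ is by definition, after taking Ind.

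\textbf{Preservation.} It suffices to treat $\Upsilon_f^{\anor}$ and $\Psi_f^{\anor}$ applied to generators $M=p_*\mathds{1}_Z(n)$ with $p$ proper over $X_\eta$. Since compact objects form a thick subcategory and both functors are exact, this suffices. By properness of $p$ one compactifies to $\bar p\colon \bar Z\to X$ over $\mathbb{A}^1_k$, and Proposition \ref{prop: functoriality of nearby cycles}(2) gives
\[
\Psi_f(p_*\mathds{1})\simeq \bar p_{\sigma,*}\Psi_{f\circ\bar p}(\mathds{1}).
\]
Since $\bar p_{\sigma,*}$ preserves compact objects, we reduce to showing that $\Psi_g(\mathds{1})$ is compact. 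By resolution of singularities we may assume that $\bar Z$ is smooth and that $\bar Z_\sigma$ is a normal crossings divisor.

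Then the semi-stable reduction theorem stated above expresses $u_I^*\Psi_g(\mathds{1})$ as $v_{I,*}$ applied to its restriction to $D_I^\circ$. Locally on $D_I^\circ$ the function is $ug^{n}$-like, so Proposition \ref{prop: computation of nearby cycles}(2) computes this restriction as a finite étale pushforward of $\mathds{1}$ in the unipotent case. For the full $\Psi$, étale-locally we are in the situation of Proposition \ref{prop: computation of nearby cycles}(1) via $e_n'$. Gluing the strata by localization triangles then shows that $\Psi_g(\mathds{1})$ is compact.

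\textbf{Main obstacle.} I expect the hardest step to be the stratum-wise computation when several components meet, where the local model is $u\prod g_i^{n_i}$ rather than $ug^n$. I would first try to reduce this case to the product formula, Proposition \ref{prop: functoriality of nearby cycles}(4), combined with the single-component case; this still needs justification. In the $\dn$ setting, the thick generation statement is a second delicate point.
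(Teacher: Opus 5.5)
\textbf{Gap 1: multi-branch strata.} Your reduction to the generators $p_*\mathds{1}_Z(n)$ and your use of $\beta_{\bar p}$ are fine. The preservation argument, however, does not close at the step you flag. On a stratum $D_I^\circ$ with $|I|\geq 2$, the function is locally $u\prod_{i\in I}g_i^{n_i}$, so Proposition \ref{prop: computation of nearby cycles}(2) does not apply there. Proposition \ref{prop: functoriality of nearby cycles}(4) will not supply the missing computation either. It is an external-product statement, with special fibre $X_\sigma\times Y_\sigma$. It says nothing about $\Psi$ of a product of functions such as $t_1t_2$ on $\mathbb{A}^2_k$, whose special fibre is the union of the two axes.

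You have also swapped the two halves of Proposition \ref{prop: computation of nearby cycles}(2): the finite \'etale pushforward $(r_n)_*\mathds{1}_{D_n}$ is $\Psi_f^{\anor}(\mathds{1})$, while $\Upsilon_f^{\anor}(\mathds{1})=\mathds{1}$.

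If you want to keep a direct d\'evissage, the missing device is the blow-up used in the proof of Proposition \ref{prop: logarithm of nearby cycles}. Blowing up $D_J$, with $\epsilon(J)\colon X(J)\to X$ and $e\colon E(J)\hookrightarrow X(J)_\sigma$ the exceptional divisor, gives $u_{J,*}u_J^*\Psi_f(\mathds{1})\simeq(\epsilon(J)_\sigma)_*e_*e^*\Psi_{f\circ\epsilon(J)}(\mathds{1})$. Since $E(J)$ is a single branch, semi-stable reduction plus the one-branch computation on $X\setminus\bigcup_{j\neq i}D_j$ then suffice.

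The paper avoids this entire d\'evissage. It uses four facts:
\begin{enumerate}
\item compactness preservation on $\daet$ is Ayoub's theorem;
\item $\Psi_f^{\anor}$ and $\Upsilon_f^{\anor}$ commute with the realizations to $\dn$ and $\dcat_{\geo}$;
\item these realizations preserve compact objects;
\item the generators $p_*\mathds{1}_Z(n)$ of $\dn(X)$ and $\dcat_{\geo}(X)$ are realizations of compact \'etale motives.
\end{enumerate}
So the images of the generators are compact, and exactness plus thick closure finishes.

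\textbf{Gap 2: generation for $\dn$.} Knowing that every perverse Nori motive is a subquotient of some $\phnor^0_{\univ}(M)$ does not put it in the thick closure of the image of $\daetct(X)$. Thick subcategories are not closed under subquotients in the heart. For example, in $\dcat^b$ of finitely generated $\mathbb{Q}[x]/(x^2)$-modules, the simple module is a subobject and a quotient of the free module, yet it is not perfect, hence not in the thick closure of the free module. ``Beilinson-type gluing'' does not by itself repair this. The statement is true for Nori motives, but it is a genuine theorem rather than a formal consequence. The paper takes from Tubach that $\dn(X)$ is compactly generated by the $f_{\#}(\mathds{1}_Y)(i)$ with $Y$ smooth over $X$. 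It then passes to the generators $p_*\mathds{1}_Z(n)$ by descent, or alternatively via Iwanari's Lemma 2.6 and the $\daet$ case.
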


\begin{proof}
    For $\daet(X)$, this is \cite[Proposition 3.19]{ayoub-2014} and \cite[Proposition 2.2.27]{ayoub-thesis-2}. For $\dcat_{\geo}(X)$, this is obvious. By \cite{tubach-2025}, $\dn(X)$ is compactly generated by motives of the form $f_{\#}(\mathds{1}_Y)(i)$ with $Y$ smooth over $X$ and $i \in \mathbb{Z}$. Using descent, one can reduce the generators to those in statements. Alternatively, using what known for $\daet$, we can use \cite[Lemma 2.6]{iwanari}. The functors $\Psi_f^{\anor},\Upsilon_f^{\anor}$ preserve compact motives on $\daet(-)$. Using realization and the fact that $\dn(X),\dcat_{\geo}$ are compactly generated by motives of the form $p_*(\mathds{1}_Z(n))$, it suffices to check that $\Psi_f^{\anor}(p_*\mathds{1}_Z(n)),\Upsilon_f^{\anor}(p_*\mathds{1}_Z(n))$ are compact and this is obvious since they lie in the essential realization functors and the fact that realization functors preserve compact motives. 
\end{proof}

The following proposition is essentially due to Ayoub in \cite{ayoub-thesis-2}\cite{ayoub-nearby-cycles} (see also \cite{florian+julien-2013}\cite{ayoub+florian+julien-2017}\cite{florian+julien-2021} and \cite{bang-thesis}). It holds true for any $\mathbb{Q}$-linear, separated coefficient system and in fact for our later applications, it suffices to show it for \'etale motives and claim the result using realization functors from \'etale motives to other theories. We present here the complete for the convenience of the reader. 
\begin{prop}  \label{prop: logarithm of nearby cycles}
Let $M \in \hbf_{\ct}(X_{\eta})$ be a compact motive, then there exists an integer $n_0=n_0(M)$ such that $\Upsilon_{f_n}^{\anor}(e_n)_{\eta}^*(M) \simeq \Psi_f^{\anor}(M)$ for any $n$ divisible by $n_0$. 
\end{prop}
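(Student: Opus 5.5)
We prove the statement for compact motives in $\daet$ and then pass to $\dcat_{\geo}$ and $\dn$ via the realization functors, which commute with $\Psi^{\anor}$, $\Upsilon^{\anor}$ and the pullbacks $(e_n)_\eta^*$.

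\textbf{Step 1: reduction to generators.} The transition maps $\Upsilon_{f_n}^{\anor}(e_n)_\eta^*\to\Upsilon_{f_{nm}}^{\anor}(e_{nm})_\eta^*$ and the colimit presentation $\Psi_f^{\anor}=\colim_n \Upsilon_{f_n}^{\anor}(e_n)_\eta^*$ are natural in $M$ and exact. So the full subcategory of compact $M$ for which the stabilization holds (some $n_0$ works) is stable: it is closed under shifts, cones and retracts. For a cone one takes the lcm of the two integers $n_0$, using the five lemma. By Proposition \ref{prop: compactness of Ayoub's functors} it therefore suffices to treat $M=p_*\mathds{1}_Z(m)$ with $p\colon Z\to X_\eta$ proper.

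\textbf{Step 2: reduction to $M=\mathds{1}$ on a good model.} By Nagata and Hironaka we extend $p$ to a proper $\bar p\colon \bar Z\to X$. We may further arrange that $\bar Z$ is smooth and that $(f\circ\bar p)^{-1}(0)$ is a normal crossings divisor. Proper base change $\beta_{\bar p}$ of Proposition \ref{prop: functoriality of nearby cycles}(2) holds for both $\Psi$ and $\Upsilon$, and it is compatible with $e_n$ and with the transition maps. This reduces us to $X$ smooth, $X_\sigma$ a normal crossings divisor, and $M=\mathds{1}_{X_\eta}$; Tate twists commute with everything.

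\textbf{Step 3: the normal crossings case.} Write $f=u\prod_i g_i^{a_i}$ locally and let $n_0=\operatorname{lcm}(a_i)$. We compare the two sides on each stratum $D_I^\circ$. The semi-stable reduction theorem, applied to $\Psi$ and to $\Upsilon$ for both $f$ and $f_n$, shows that $u_I^*$ of either side is recovered from its restriction to the open stratum $D_I^\circ$ by $v_{I,*}$. Hence it suffices to check the isomorphism after restriction to each $D_I^\circ$. There we shrink $X$ Zariski locally, using the isomorphism $\alpha_g$ of Proposition \ref{prop: functoriality of nearby cycles}(1) for open immersions, so that $f=ug_1^{a_1}\cdots g_r^{a_r}$.

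For $n$ divisible by $n_0$, the normalization of $X_n$ over $D_I^\circ$ is a disjoint union of pieces on which $f_n$ becomes a unit times a product of powers with exponents divisible by $n$. The explicit computation of Proposition \ref{prop: computation of nearby cycles} then gives two things:
\begin{itemize}
\item[(i)] $\Psi_f^{\anor}(\mathds{1})$ is the pushforward of $\mathds{1}$ from the finite étale cover extracting $a$-th roots of $u$ (with $a=\gcd$ of the relevant $a_i$);
\item[(ii)] $\Upsilon_{f_n}^{\anor}(e_n)_\eta^*\mathds{1}$ computes the same cover, because after the base change by $e_n$ the full Kummer cover has already been split off.
\end{itemize}
The remaining part is unipotent, and $\Upsilon$ sees it exactly. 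Through the Künneth formula of Proposition \ref{prop: functoriality of nearby cycles}(4), the unipotent part reduces to the monomial case $\prod g_i^{a_i}$, where both sides are governed by Ayoub's explicit computation. One then checks that the transition maps are isomorphisms once $n_0\mid n$.

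\textbf{Main obstacle.} The hardest step is Step 3 beyond the smooth-divisor case of Proposition \ref{prop: computation of nearby cycles}(2). One must control the normalization of $X_n$ along deeper strata and show that the comparison map is the canonical one, compatible with the transition maps. If the direct computation becomes messy, we would first try to reduce, via the Künneth isomorphism and the semi-stable reduction theorem, to the model $f=u\,t_1^{a_1}\cdots t_r^{a_r}$ on $\mathbb{G}_m\times\mathbb{A}^r$. In that model the cover $X_n$ is explicit, so both sides can be computed directly.
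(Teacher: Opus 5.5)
Your Steps 1 and 2 essentially match the paper's reductions: the thick-subcategory argument with lcm's, then proper base change and resolution to reduce to $M=\mathds{1}_{X_\eta}$ with $X$ smooth and $X_\sigma$ a normal crossings divisor. Step 3, however, has a genuine gap.

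You decide to verify the comparison map on \emph{every} stratum $D_I^\circ$, including those with $|I|\geq 2$. There $f=u\,g_1^{a_1}\cdots g_r^{a_r}$ with $r\geq 2$, and nothing available to you computes either side. Proposition \ref{prop: computation of nearby cycles}(2) is a one-branch statement. The K\"unneth isomorphism of Proposition \ref{prop: functoriality of nearby cycles}(4) concerns external products of nearby cycles and does not reduce the monomial $g_1^{a_1}\cdots g_r^{a_r}$ to one-branch cases. Your description (i) is also false as stated on deep strata. Take $f=g_1g_2$ over $D_{\{1,2\}}^\circ$: the Milnor fibre is a circle, so the Betti realization of $\Psi_f^{\anor}(\mathds{1})$ has stalks $H^*(S^1;\mathbb{Q})$, nonzero in degree $1$. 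It is therefore not the pushforward of $\mathds{1}$ along a finite \'etale cover. Your fallback of computing directly on $\mathbb{G}_m\times\mathbb{A}^r$ is exactly the part left undone.

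The missing observation is that deep strata never need to be computed. The components $D_i$ cover $X_\sigma$, so the functors $u_i^*$ are jointly conservative. Semi-stable reduction with $I=\{i\}$, applied to both specialization systems, identifies $u_i^*$ of the comparison map with $v_{i,*}v_i^*u_i^*$ of it via the unit $\id\to v_{i,*}v_i^*$. So it suffices to work Zariski-locally over $X_i=X\setminus\bigcup_{j\neq i}D_j$. There $f=v\,g_i^{a_i}$ with $v$ a unit, and Proposition \ref{prop: computation of nearby cycles}(2) gives the isomorphism once $a_i\mid n$; your choice $n_0=\operatorname{lcm}(a_i)$ then works.

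The paper reaches the same one-branch situation by a slightly longer route:
\begin{enumerate}
\item Mayer--Vietoris over the closed cover reduces to $u_{J,*}u_J^*$.
\item Blowing up $D_J$ and using proper base change turns restriction to $D_J$ into restriction to the exceptional divisor $E(J)$.
\item Semi-stable reduction on $E(J)$ reduces to its open stratum, where $f\circ\epsilon(J)$ is a unit times a single power.
\end{enumerate}

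A smaller point in Step 2: Hironaka cannot make $\bar Z$ smooth while keeping $\bar Z_\eta=Z$ unless $Z$ is already smooth. You should start from generators $p_*\mathds{1}_Z(m)$ with $Z$ smooth, which are available in characteristic zero. The paper's resolution step tacitly needs the same.
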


\begin{proof}
All geometric motives are built by taking cofibers, retracts, finite sums, shifts of motives of the form $M = g_{\eta}(\mathds{1}_{Y_{\eta}})$ for a proper morphism $g \colon Y \longrightarrow X$ so it suffices to assume that $M$ has this form. It is obvious that there is a commutative diagram
\begin{equation*}
\begin{tikzcd}[sep=large]
\Upsilon_{f_n}^{\anor}(e_n)_{\eta}^*\big((g_{\eta})_*(\mathds{1}_{Y_{\eta}}) \big) \arrow[r,"\sim"] \arrow[d] &   (g_{\sigma})_*\Upsilon_{f_n \circ g_n}^{\anor}(e_n)_{\eta}^*\big(\mathds{1}_{Y_{\eta}} \big) \arrow[d] \\
    \Psi_f\big((g_{\eta})_*(\mathds{1}_{Y_{\eta}}) \big) \arrow[r,"\sim"] &   (g_{\sigma})_*\Psi_{f \circ g}\big(\mathds{1}_{Y_{\eta}} \big) 
    \end{tikzcd}
\end{equation*}
Thus it is enough to prove the statement for $M = \mathds{1}_{X_{\eta}}$. At this point, let us consider a resolution of singularities 
\begin{equation*}
    h \colon (Y,E) \longrightarrow (X,X_{\sigma})
\end{equation*}
so that $h_{\mid Y \setminus E} \colon Y \setminus E \longrightarrow X \setminus X_{\sigma} = X_{\eta}$ is an isomorphism and $E$ is a divisor with normal crossings. Since $h_{\eta,*}(\mathds{1}_{Y_{\eta}}) \simeq \mathds{1}_{X_{\eta}}$ and by the proper base change theorem
\begin{equation*}
    \sp_f(\mathds{1}_{X_{\eta}}) \simeq \sp_f\big( h_{\eta,*}(\mathds{1}_{Y_{\eta}})\big) \simeq h_{\sigma,*}\sp_{f \circ h}(\mathds{1}_{Y_{\eta}}).
\end{equation*}
Therefore, we can assume that $X_{\sigma}$ is a divisor with normal crossings. Let $(D_i)_{i \in I}$ be irreducible components of $X_{\sigma}$. We use the notation of proposition \ref{prop: functoriality of nearby cycles}. The Mayer-Vietoris sequence shows that $\Psi^{\anor}_f(\mathds{1}_{X_{\eta}}),\Upsilon^{\anor}_{f_n}(\mathds{1}_{(X_n)_{\eta}})$ belong to the subcategory generated by $u_{J,*} u_J^*\Psi^{\anor}_f(\mathds{1}_{X_{\eta}}),u_{J,*} u_J^*\Upsilon^{\anor}_{f_n}(\mathds{1}_{(X_n)_{\eta}})$, respectively, with $J \subset I$ nonempty. This reduces to proving that 
\begin{equation*}
    u_{J,*} u_J^*\Upsilon^{\anor}_{f_n}(\mathds{1}_{(X_n)_{\eta}}) \longrightarrow  u_{J,*} u_J^*\Psi^{\anor}_f(\mathds{1}_{X_{\eta}})
\end{equation*} 
is an isomorphism. Let us demonstrate that it is safe to suppose that $\left |J \right| = 1$. We denote by $\sp_f$ to indicate either $\Upsilon_{f_n}^{\anor}$ or $\Psi_f^{\anor}$. We consider the diagram 
\begin{equation*}
    \begin{tikzcd}[sep=huge]
        E(J) \arrow[d] \arrow[r,"e",hook] & X(J)_{\sigma} \arrow[r]  \arrow[d,"\epsilon(J)_{\sigma}"] & X(J) \arrow[d,"\epsilon(J)"] \\ 
        D_J \arrow[r,"u_J"] & X_{\sigma} \arrow[r,"i_f"] & X
    \end{tikzcd}
\end{equation*}
where $\epsilon(J) \colon X(J) \longrightarrow X$ is the blow-up in $X$ with center $D_J$ and exceptional divisor $E(J)$. The special fiber $X(J)_{\sigma}$ is a divisor with normal crossings whose irreducible components are $E(J)$ and strict transforms of $D_i$ for $i \in I$. We have 
\begin{align*}
    u_{J,*}u_J^*\sp_f(\mathds{1}_{X_{\eta}}) & \simeq u_{J,!}u_J^*(\epsilon(J)_{\sigma})_*\sp_{f \circ \epsilon(J)}(\mathds{1}_{X(J)_{\eta}}) \\ 
    & \simeq (\epsilon(J)_{\sigma})_*e_*e^*\sp_{f \circ \epsilon(J)}(\mathds{1}_{X(J)_{\eta}}).
\end{align*}
Again, we reduce the problem to showing that $e_*e^*\Upsilon^{\anor}_{f_n \circ \epsilon(J)_n}(\mathds{1}_{X(J)_{\eta}}) \longrightarrow e_*e^*\Psi^{\anor}_{f \circ \epsilon(J)}(\mathds{1}_{X(J)_{\eta}})$ is an isomorphism and hence we can assume that $J = \left \{ i \right \} \subset I$. By the semi-stable reduction property, there is a canonical isomorphism 
\begin{equation*}
  u_{i,*} v_{i,*}v_i^*u_i^*\sp_f(\mathds{1}_{X_{\eta}}) \simeq u_{i,*}u_i^*\sp_f(\mathds{1}_{X_{\eta}})
\end{equation*}
with $v_i \colon D_i^{\circ} \longhookrightarrow D_i$ the canonical open immersion. Consequently, it boils down to proving that $v_i^!u_i^!\sp_f(\mathds{1}_{X_{\eta}})$ is compact. Let us denote by $\tau_i \colon X_i = X \setminus \left( \bigcup_{j \neq i} D_j \right)_{\textnormal{red}} \longhookrightarrow X$ the canonical open immersion. With this, one has that $((X_i)_{\sigma})_{\textnormal{red}} \simeq D_i^{\circ}$ and there is a canonical factorization 
\begin{equation*}
    \begin{tikzcd}[sep=large]
        ((X_i)_{\sigma})_{\textnormal{red}} \simeq D_i^{\circ} \arrow[r] \arrow[rr,bend left = 25,"u_i \circ v_i"] \arrow[r,"r_i"] & (X_i)_{\sigma} \arrow[r,"\tau_i"] & X_{\sigma}.
    \end{tikzcd}
\end{equation*}
We then have that
\begin{equation*}
      v_i^*u_i^*\sp_f(\mathds{1}_{X_{\eta}}) \simeq r_i^*\sp_{f_{\mid X_i}}(\mathds{1}_{(X_i)_{\eta}})
\end{equation*}
(since $\tau_i$ is an open immersion). On the open subscheme $X_i \longhookrightarrow X$, all the regular functions $t_j$ with $j \neq i$ are invertible. Therefore we can simplify the expression of $f$ in the ring $\Gamma(X_i,\mathcal{O}_{X_i})$ as $f = vt_i^{N_i}$ with $v \in \Gamma(X_i,\mathcal{O}^{\times}_{X_i})$. Now by $(2)$ in proposition \ref{prop: computation of nearby cycles}, the morphism $\Upsilon^{\anor}_{f_n}(\mathds{1}_{X_{\eta}}) \longrightarrow \Psi^{\anor}_f(\mathds{1}_{X_{\eta}})$ is an isomorphism and we finish the proof.
\end{proof}

The proposition \ref{prop: logarithm of nearby cycles} applies, for example, to geometric origin, compactness and any further property for which the standard one-branch calculations are known. In particular, the proof displays how an arbitrary constructible motive is assembled from the local normal-crossings models. Its proof yield the followng consequences, which are already known in \cite{ayoub-thesis-2}.
\begin{cor} \label{cor: criterion for compactness}
   Let $\sp$ be a specialization system, if $\sp_f(\mathds{1}_{X_{\eta}})$ is compact whenever $X$ satisfies (2) in proposition \ref{prop: computation of nearby cycles}, then $\sp_f(M)$ is compact for any $f$ and $M$ compact. 
\end{cor}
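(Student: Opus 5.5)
We follow the reduction scheme in the proof of Proposition \ref{prop: logarithm of nearby cycles}, now applied to an arbitrary specialization system $\sp$ and keeping track of compactness instead of an isomorphism. Compact objects form a thick stable subcategory: it is closed under cofibers, retracts, finite sums, shifts and Tate twists. By Proposition \ref{prop: compactness of Ayoub's functors}, the compact objects of $\hbf(X_{\eta})$ are generated under these operations by motives $g_{\eta,*}(\mathds{1}_{Y_{\eta}})(n)$ with $g \colon Y \longrightarrow X$ proper. Since $\sp_f$ is exact, it therefore suffices to treat these generators; for the twists we use that $\sp_f$ commutes with Tate twists, which is part of the specialization system structure via $\alpha$ for the unit. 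Because $g$ is proper, $\beta_g$ gives
\begin{equation*}
\sp_f\big(g_{\eta,*}\mathds{1}_{Y_{\eta}}\big) \simeq g_{\sigma,*}\sp_{f\circ g}(\mathds{1}_{Y_{\eta}}).
\end{equation*}
For proper $g$ the functor $g_{\sigma,*} = g_{\sigma,!}$ preserves compact objects, so we are reduced to the case $M = \mathds{1}_{X_{\eta}}$.

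Next we pick a resolution $h \colon (Y,E) \longrightarrow (X,X_{\sigma})$ that is an isomorphism over $X_{\eta}$, with $Y$ smooth and $E$ a normal crossings divisor. Then $\mathds{1}_{X_{\eta}} \simeq h_{\eta,*}\mathds{1}_{Y_{\eta}}$, and the same proper base change argument lets us assume that $X$ is smooth and $X_{\sigma}$ has normal crossings with components $D_i$. The Mayer--Vietoris (\v{C}ech) sequence for the closed cover $X_{\sigma}=\bigcup D_i$ places $\sp_f(\mathds{1})$ in the thick subcategory generated by the objects $u_{J,*}u_J^*\sp_f(\mathds{1})$. Since $u_{J,*}$ preserves compactness, it is enough to show that each $u_J^*\sp_f(\mathds{1})$ is compact. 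Using the blow-up $\epsilon(J)$ of $D_J$ as in the earlier proof, we get
\begin{equation*}
u_{J,*}u_J^*\sp_f(\mathds{1}) \simeq (\epsilon(J)_{\sigma})_*e_*e^*\sp_{f\circ\epsilon(J)}(\mathds{1}),
\end{equation*}
which reduces us to the case of a single component, $J=\{i\}$.

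By the semi-stable reduction theorem, $u_i^*\sp_f(\mathds{1}) \simeq v_{i,*}v_i^*u_i^*\sp_f(\mathds{1})$. Because $v_{i,*}$ is pushforward along an open immersion it does not a priori preserve compactness, and I expect this to be the main obstacle. The way around it is to rewrite
\begin{equation*}
v_{i,*}v_i^*u_i^* \simeq v_{i,*}v_i^!u_i^*,
\end{equation*}
and then to regard $D_i^{\circ}$ as the special fiber of the open $X_i$ on which $f=vt_i^{N_i}$. There $v_i^*u_i^*\sp_f(\mathds{1}) \simeq r_i^*\sp_{f|_{X_i}}(\mathds{1}_{(X_i)_{\eta}})$, and $X_i$ satisfies condition (2) of Proposition \ref{prop: computation of nearby cycles}. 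By hypothesis this object is therefore compact, and $r_i^*$ (pullback along a closed immersion) preserves compactness. Finally we use that $v_{i,*}$ preserves compact objects in our constructible setting: in the theories $\daet$, $\dcat_{\geo}$ and $\dn$, $j_*$ for a quasi-projective open immersion sends constructible objects to constructible objects, which is part of the six-functor formalism on compact objects. This concludes the argument.
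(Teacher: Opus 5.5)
Your proposal is correct and follows essentially the same route as the paper, which obtains this corollary by rerunning the proof of Proposition~\ref{prop: logarithm of nearby cycles} with compactness in place of an isomorphism. Its steps are reduction to $g_{\eta,*}\mathds{1}$ via $\beta_g$, resolution, Mayer--Vietoris, blow-up of $D_J$, semi-stable reduction, and restriction to $X_i$. Your final appeal to $v_{i,*}$ (and $u_{J,*}$, $g_{\sigma,*}$) preserving constructible objects is exactly what the paper uses implicitly when it says everything ``boils down'' to compactness over $D_i^{\circ}$.

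The only points you and the paper both gloss over are two. First, $f = v t_i^{N_i}$ with a global equation $t_i$ holds only Zariski-locally on $X_i$, so you also need that compactness is Zariski-local; this follows from Mayer--Vietoris and $j_*$ preserving constructibility. Second, commutation of $\sp_f$ with Tate twists is better justified by writing $M(-1)[-2]$ as the fiber of $\bar{p}_*\bar{p}^*M \to M$ for $\bar{p} \colon \mathbb{P}^1_X \to X$ than ``via $\alpha$ for the unit''.
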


\begin{cor} \label{cor: criterion for iso of sp}
    Let $\sp \longrightarrow \sp'$ be a morphism of specialization systems. If $\sp_f(\mathds{1}_{X_{\eta}}) \longrightarrow \sp_f'(\mathds{1}_{X_{\eta}})$ is an isomorphism whenever $X$ satisfies (2) in proposition \ref{prop: computation of nearby cycles}, then $\sp \longrightarrow \sp'$ is an isomorphism on constructible motives. If moreover, $\sp,\sp'$ commute with direct sums, then $\sp \longrightarrow \sp'$ is an isomorphism on all motives. 
\end{cor}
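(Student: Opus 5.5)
We reuse the dévissage in the proof of Proposition \ref{prop: logarithm of nearby cycles}, now applied to a morphism of specialization systems $\phi\colon\sp\to\sp'$. For the first assertion, let $\kcal_f\subset\hbf_{\ct}(X_{\eta})$ be the full subcategory of compact $M$ for which $\phi_f(M)\colon\sp_f(M)\to\sp'_f(M)$ is an isomorphism. Since $\sp_f,\sp'_f$ are exact functors, $\kcal_f$ is a thick stable subcategory: it is closed under cofibers, shifts, finite sums and retracts. So it suffices to show it contains the generators $g_{\eta,*}(\mathds{1}_{Y_{\eta}})(n)$ with $g\colon Y\to X$ proper, as in Proposition \ref{prop: compactness of Ayoub's functors}. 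The twist commutes with both specialization systems, so we may drop it.

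\textbf{Reduction to the unit.} Because $\phi$ is a morphism of specialization systems, the adjoint transformations $\beta_g$ are compatible with $\phi$. Since $\beta_g$ is an isomorphism for $g$ proper on both sides, $\phi_f(g_{\eta,*}\mathds{1}_{Y_\eta})$ is identified with $g_{\sigma,*}\phi_{f\circ g}(\mathds{1}_{Y_{\eta}})$. We therefore reduce to $M=\mathds{1}_{X_{\eta}}$ for an arbitrary $f$.

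\textbf{Reduction to normal crossings and local models.} Next we take a resolution $h\colon(Y,E)\to(X,X_\sigma)$ and use $\beta_h$ again, which lets us assume $X$ is smooth and $X_\sigma$ is a normal crossings divisor with components $D_i$. By Mayer--Vietoris, both $\sp_f(\mathds{1})$ and $\sp'_f(\mathds{1})$ are built from the pieces $u_{J,*}u_J^*(-)$, and these constructions are natural in $\phi$. Blowing up $D_J$ reduces the case of general $J$ to $|J|=1$. Semi-stable reduction, which holds for both systems, gives $u_i^*\sp_f(\mathds{1})\simeq v_{i,*}v_i^*u_i^*\sp_f(\mathds{1})$, and similarly for $\sp'$. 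Since $\alpha$ for an open immersion is an isomorphism compatible with $\phi$, this identifies $\phi$ on the $i$-th piece with $v_{i,*}r_i^*\phi_{f|X_i}(\mathds{1}_{(X_i)_\eta})$. On $X_i$ we have $f=vt_i^{N_i}$ with $v$ a unit and $D_i^\circ$ smooth, which is exactly situation (2) of Proposition \ref{prop: computation of nearby cycles}. The hypothesis then says $\phi_{f|X_i}(\mathds{1})$ is an isomorphism.

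The main point requiring care is that every identification in this chain is natural in $\phi$. This holds because a morphism of specialization systems commutes with the $\alpha_g$ by definition, and hence, by adjunction, with the $\beta_g$. The Mayer--Vietoris and semi-stable reduction isomorphisms are built from unit maps of adjunctions, so they are natural too. A secondary point is that semi-stable reduction is a hypothesis on both $\sp$ and $\sp'$; this holds for $\mathbb{Q}$-linear separated $\hbf$, as granted by the Semi-stable reduction theorem stated earlier.

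For the second assertion, $\hbf(X_\eta)$ is compactly generated by Proposition \ref{prop: compactness of Ayoub's functors}, so every object is a filtered colimit of compact objects. Exact functors between stable categories preserve finite colimits, so commuting with direct sums means they preserve all colimits. Thus $\phi_f$ is a natural transformation between colimit-preserving functors that is an isomorphism on compact generators, and hence it is an isomorphism everywhere.
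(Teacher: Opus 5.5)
Your proposal is correct and is essentially the paper's own argument. The paper obtains this corollary directly from the d\'evissage in the proof of Proposition \ref{prop: logarithm of nearby cycles}: reduction to $g_{\eta,*}\mathds{1}_{Y_{\eta}}$ via $\beta_g$, resolution, Mayer--Vietoris, blowing up $D_J$, semi-stable reduction, and the one-branch model of type (2). For the second assertion it likewise relies on compact generation together with commutation with sums. Your explicit check that every identification is natural in the morphism $\sp\to\sp'$ (via compatibility with $\alpha_g$, hence by adjunction with $\beta_g$) is exactly the point the paper leaves implicit.
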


\section{The logarithmic specialization system}
This section serves as prerequisites for the next one where we would like to define the "universal" monodromy operator on perverse Nori motives. By the universal property, this is tantamout to proving two monodromy sequences on \'etale motives and on constructible complexes of geometric origin compatible. In order to do this, we construct the logarithmic specialization system as an intermediate specialization system. These specialization systems are introduced in \cite{ayoub-nearby-cycles}\cite{ayoub-thesis-2}\cite{ayoub-2014} and will be useful here since on perverse sheaves, they are parallel to Beilinson's construction of unipontent nearby cycles. 
\subsection{The Kummer motives} 
In \cite{ayoub-thesis-2}\cite{ayoub-2010}\cite{ayoub-nearby-cycles}, Ayoub defines the motivic monodromy sequence
\begin{equation*}
    i^*j_* \longrightarrow \Upsilon_f \overset{N}{\longrightarrow} \Upsilon_f(-1) \longrightarrow +1
\end{equation*}
where $N \colon \Upsilon_f \longrightarrow \Upsilon_f(-1)$ is the so-called \textit{monodromy operator}. In the classical setting, this is the logarithm of the unipotent part of the monodromy action. The goal of this section is to prove that one can lift this sequence to the category of Nori motives. Let us recall the construction of the monodromy operator. Let $\hbf \in \left \{\daet(),\dn(-),\dcat_{\geo}(-) \right \}$. By the \cite[Proposition 11.1]{ayoub-2014}, there is a unique extension in $\daet(\gm_m)$ of the form
\begin{equation*}
    \mathds{1}_{\gm_m} \longrightarrow \mathcal{K} \longrightarrow \mathds{1}_{\gm_m}(-1) \longrightarrow +1
\end{equation*}
called the \textit{Kummer extension} (the Kummer extension is a Tate motive so in particular, over number fields, one can use the structure of a motivic $t$-structure on Tate motives due to Levine in \cite{levine-1993} to show the existence of $\mathcal{K}$). In particular, under the Nori and the Betti realization, there are corresponding cofiber sequences
\begin{align*}
     \mathds{1}_{\gm_m} \longrightarrow \mathcal{K}^{\nori}\longrightarrow \mathds{1}_{\gm_m}(-1) \longrightarrow +1 \\ 
      \mathds{1}_{\gm_m} \longrightarrow \mathcal{K}^{\an} \longrightarrow \mathds{1}_{\gm_m}(-1) \longrightarrow +1
\end{align*}
where $\kcal^{\nori}=\nori^*_{\gm_{m,k}}(\kcal)$ and $\kcal^{\nori} = \betti^*_{\gm_{m,k}}(\kcal)$ are images of the Kummer motive. The complex $\kcal^{\an}$ is necessarily unique thanks to the following lemma.  
\begin{lem} \label{lem: monodromy of Kummer motive}
    In $\dcat_{\geo}(\gm_{m,k})$, there is a unique nontrivial extension (of local systems) the form 
    \begin{equation*}
    \mathds{1}_{\gm_m} \longrightarrow \mathcal{K}^{\an} \longrightarrow \mathds{1}_{\gm_m}(-1) \longrightarrow +1
\end{equation*}
    and moreover, after choosing a canonical basis, the complex $\mathcal{K}^{\an}$ (indeed, a sheaf) has monodromy action given by 
        \begin{equation*}
            L_2 =  \begin{pmatrix}
1 & -1 \\
0  & 1 \\
\end{pmatrix}.
        \end{equation*} and there is a canonical isomorphism $\betti^*(\kcal) = \kcal^{\an}$,
\end{lem}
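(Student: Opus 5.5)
We reduce everything to the classification of rank-two unipotent local systems on $\gm_m^{\an} = \mathbb{C}^{\times}$. First we show that any extension $\mathds{1}_{\gm_m} \to E \to \mathds{1}_{\gm_m}(-1) \to +1$ in $\dcat_{\geo}(\gm_{m,k})$ is concentrated in a single degree and is a local system. Both outer terms are constant sheaves of rank one placed in degree $0$, so the long exact sequence of cohomology sheaves shows that $E$ is a sheaf in degree $0$. It is a local system because the category of local systems is closed under extensions inside sheaves.

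Extensions are classified by
\begin{equation*}
\Hom_{\dcat(\gm_m)}(\mathds{1}(-1),\mathds{1}[1]) \simeq H^1(\mathbb{C}^{\times},\mathbb{Q})(1).
\end{equation*}
This group is one-dimensional, spanned by the class dual to the loop around $0$; the twist is only bookkeeping on the Betti side. Via the monodromy equivalence with $\mathbb{Q}[T,T^{-1}]$-modules, an extension corresponds to a two-dimensional module with $T$ acting by $\begin{pmatrix} 1 & a \\ 0 & 1\end{pmatrix}$ for some $a \in \mathbb{Q}$. The extension is trivial exactly when $a = 0$. Rescaling the basis vector lifting $\mathds{1}(-1)$ identifies all extensions with $a \neq 0$. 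This gives uniqueness of the nontrivial extension up to isomorphism. Fixing the canonical generator of $H_1(\mathbb{C}^{\times},\mathbb{Z})$ (positive loop) and the matching Tate twist identification $\mathbb{Q}(-1) = (2\pi i)^{-1}\mathbb{Q}$ pins down $a=-1$, giving $L_2$. The sign is a convention check against the identification $\Hom(\mathds{1}(-1),\mathds{1}[1]) \simeq \mathcal{O}^{\times}\otimes\mathbb{Q}$ evaluated on the coordinate $t$.

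Second, $\betti^*(\kcal)$ is such an extension, since $\betti^*$ is exact, unital and commutes with Tate twists. It remains to see it is nontrivial. By \cite[Proposition 11.1]{ayoub-2014}, the class of $\kcal$ is the image of the coordinate $t \in \mathcal{O}^{\times}(\gm_m)$ under the Kummer map
\begin{equation*}
\mathcal{O}^{\times}(\gm_m)\otimes\mathbb{Q} \to \Hom_{\daet}(\mathds{1},\mathds{1}(1)[1]).
\end{equation*}
The Betti realization of this Kummer class is the first Chern class $c_1$ of the $\mathbb{G}_m$-torsor. Equivalently, it is the connecting map of the exponential sequence, which sends $t$ to the generator $d\log t/2\pi i$ of $H^1(\mathbb{C}^{\times},\mathbb{Q}(1))$, and this is nonzero. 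Hence $\betti^*(\kcal)$ is the nontrivial extension, and by uniqueness we get $\betti^*(\kcal) \simeq \kcal^{\an}$.

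To make the isomorphism canonical, note that $\Hom(\mathds{1},\mathds{1}) = \mathbb{Q}$ and $\Hom(\mathds{1}(-1),\mathds{1}(-1)) = \mathbb{Q}$ on the connected space $\gm_m$. Automorphisms of a nontrivial extension compatible with the identity on both ends are therefore of the form $\id + \phi$ with $\phi\colon \mathds{1}(-1)\to\mathds{1}$, and $\Hom(\mathds{1}(-1),\mathds{1}) = H^0 = 0$ after the twist. So the isomorphism of extensions is unique.

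The main obstacle is the compatibility of the Kummer class with $c_1$ under Ayoub's Betti realization. We would cite the compatibility of $\betti^*$ with the motivic first Chern class. Failing that, we would argue via $\mathbb{Q}$-linear naturality: the map $\mathcal{O}^{\times}\otimes\mathbb{Q} \to H^1(\mathbb{C}^{\times},\mathbb{Q}(1))$ induced by realization is a homomorphism, and it is nonzero since it is nonzero on the unit section's complement. The sign is then fixed by the chosen convention.
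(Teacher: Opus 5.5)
Your treatment of the first two assertions follows the paper's argument: one-dimensional $\operatorname{Ext}^1$, unipotent upper-triangular monodromy, then rescaling. For $\betti^*(\kcal)\simeq\kcal^{\an}$ you rightly add something the paper's one-line justification omits. Unitality and uniqueness only show that $\betti^*(\kcal)$ is \emph{some} extension of $\mathds{1}(-1)$ by $\mathds{1}$, so its class must still be shown to be nonzero. Citing the compatibility of the Kummer class of $t$ with $c_1$ is fine. Your fallback sentence (``nonzero on the unit section's complement'') is not an argument, however. Here is a clean replacement. In the localization triangle for $0\in\mathbb{A}^1_k$, since $\operatorname{ord}_0(t)=1$, the boundary of the Kummer class of $t$ is an isomorphism $\mathds{1}_0\simeq i^!\mathds{1}(1)[2]$. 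Since $\betti^*$ commutes with localization triangles and sends isomorphisms to isomorphisms, the realized class has nonzero boundary.

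The step that actually fails is your canonicity paragraph. In $\dcat_{\geo}(\gm_{m,k})$ the Tate twist is just tensoring with the one-dimensional space $\mathbb{Q}(1)=2\pi i\,\mathbb{Q}$. Hence $\Hom(\mathds{1}(-1),\mathds{1})=H^0(\mathbb{C}^{\times},\mathbb{Q}(1))\simeq\mathbb{Q}$, not $0$. Concretely, every $\bigl(\begin{smallmatrix}1&b\\0&1\end{smallmatrix}\bigr)$ commutes with $L_2$, so the automorphisms of $\kcal^{\an}$ inducing the identity on both ends form a copy of $\mathbb{Q}$. An isomorphism of extensions $\betti^*(\kcal)\simeq\kcal^{\an}$ is therefore unique only up to this group. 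The vanishing you want does hold motivically: $\Hom_{\daet(\gm_m)}(\mathds{1}(-1),\mathds{1})$ is weight-one motivic cohomology in degree $0$, which is zero, and that is why $\kcal$ is rigid. But $\betti^*$ forgets weights.

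To get a canonical isomorphism you need rigidifying data, for example a splitting of the fibre at $1\in\gm_m$. On the motivic side $1^*\kcal$ splits uniquely, because its class is that of $1^*t=1$ and $\Hom_{\daet(k)}(\mathds{1}(-1),\mathds{1})=0$. An automorphism $\id+\iota\phi\pi$ of $\kcal^{\an}$ respecting the realized splitting has $\phi$ vanishing at $1$. Since $\phi$ is a constant map between rank-one constant sheaves on a connected space, this forces $\phi=0$. Alternatively, set $\kcal^{\an}:=\betti^*(\kcal)$ by definition, as the paper's surrounding text effectively does; the paper itself gives no argument for canonicity.
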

\begin{proof}
    The last part follows from the first part since $\mathcal{K},\mathcal{K}^{\an}$ are uniquely determined and the Betti realization is unital and the second part is standard linear algebra, given the first part. To show the existence and the uniqueness of $\mathcal{K}^{\an}$, we can show its existence and uniqueness in the heart of the constructible $t$-structure and this corresponds to an element of $\operatorname{Ext}^1(\mathbb{Q}(1),\mathbb{Q}) \simeq H^1(\gm_{m,k},\mathbb{Q}) \simeq \mathbb{Q}$, which is one-dimensional. The sheaves $\mathbb{Q}(1),\mathbb{Q}$ have trivial monodromy so the monodromy on $\mathcal{K}^{\an}$ is of the form $T =  \begin{pmatrix}
1 & a \\
0  & 1 \\
\end{pmatrix}$. Since $\mathcal{K}^{\an}$ is nontrivial it implies that $a \neq 0$ and hence after rescaling the basis, $T$ has the canonical Jordan form. 
\end{proof}

\begin{defn}
We fix $\hbf$ and its corresponding Kummer motive $\kcal$. Let $f \colon X \longrightarrow \mathbb{A}_k^1$ be a morphism of $k$-varieties, the \textit{logarithmic specialization system} 
    \begin{equation*}
    \operatorname{log}_f(M) \simeq \colim_{n \in \mathbb{N}} \ i^*j_*(M \otimes f_{\eta}^*\mathrm{Log}^{\vee}_n) \simeq i^*j_*(M \otimes f_{\eta}^*( \mathrm{Log}^{\vee})) \colon \hbf(X_{\eta}) \longrightarrow \hbf(X_{\sigma})
\end{equation*}
in which $\mathrm{Log}^{\vee}_n = \operatorname{Sym}^n(\mathcal{K})$ and $\mathrm{Log}^{\vee} = \operatorname{Hocolim} \ \mathrm{Log}^{\vee}_n$. The corresponding functors for $\daet(-),\dn(-),\dcat_{\geo}$ are denoted by $\log_f,\log_f^{\nori},\log_f^{\an}(-)$, respectively. It is clear that these functors are compatible under realizations.
\end{defn}

\begin{cor} \label{cor: log sp preserves compact motives}
     The functor $\log_f^{\an} \colon \hbf(X_{\eta}) \longrightarrow \hbf(X_{\sigma})$ preserves compact motives. In particular, there is an induced commutative square
    \begin{equation*}
        \begin{tikzcd}[sep=large]
            \daetct(X_{\eta}) \arrow[d,"\log_f \simeq \Upsilon_f",swap] \arrow[r,"\betti^*_{X_{\eta}}"] & \dcat^b_{\geo}(X_{\eta}) \arrow[d,"\log_f^{\an}"] \\ 
            \daetct(X_{\sigma}) \arrow[r,"\betti^*_{X_{\sigma}}"]  & \dcat^b_{\geo}(X_{\eta}).
        \end{tikzcd}
    \end{equation*}
    In other words, the logarithmic specialization systems are compatible under the Betti realizations. 
\end{cor}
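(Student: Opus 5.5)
The plan is to deduce everything from the motivic side, using that $\log_f$ is the image of a specialization system on $\daet$ and that realizations commute with all the ingredients. First, the realization $\betti^*$ is a morphism of coefficient systems. It is unital and monoidal and commutes with $f_\eta^*$, $j_*$, $i^*$ on constructible objects. By Lemma \ref{lem: monodromy of Kummer motive} we have $\betti^*(\kcal)\simeq \kcal^{\an}$, so $\betti^*\operatorname{Sym}^n(\kcal)\simeq \operatorname{Sym}^n(\kcal^{\an})$ for every $n$. This gives, for each $n$ and each compact $M$, a natural isomorphism
\begin{equation*}
\betti^*_{X_\sigma} i^*j_*(M\otimes f_\eta^*\mathrm{Log}^\vee_n)\simeq i^*j_*(\betti^*_{X_\eta}M\otimes f_\eta^*\mathrm{Log}^{\vee,\an}_n),
\end{equation*}
compatible with the transition maps in $n$. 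Since $\betti^*$ (extended to Ind-objects) commutes with filtered colimits, passing to the colimit yields $\betti^*\circ\log_f\simeq \log_f^{\an}\circ\betti^*$ on $\daetct(X_\eta)$.

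Next, I use the motivic input that $\log_f\simeq\Upsilon_f$ on $\daet$ (Ayoub, \cite{ayoub-2014}), together with Proposition \ref{prop: compactness of Ayoub's functors}. These show that $\log_f$ sends compact objects of $\daetct(X_\eta)$ to compact objects. Consequently $\log_f^{\an}(\betti^*M)\simeq\betti^*\log_f(M)$ is compact, i.e. lies in $\dcat^b_{\geo}(X_\sigma)$, for every $M$ in the essential image of $\betti^*$.

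It remains to extend compactness from the essential image to all of $\dcat^b_{\geo}(X_\eta)$. By definition this category is the thick subcategory generated by the $p_*\mathbb{Q}_Y$ with $p$ proper, and $p_*\mathbb{Q}_Y\simeq \betti^*(p_*\mathds{1}_Y)$ lies in the essential image. The functor $\log_f^{\an}$ is exact, and it commutes with retracts because it is a filtered colimit of exact functors. Therefore the full subcategory of objects sent to compact objects is thick and contains the generators, so it is everything. Alternatively, the same reduction can be run through Corollary \ref{cor: criterion for compactness}, since $\log^{\an}$ is a specialization system: the one-branch case $f=ug^e$ is computed by pushing $\mathrm{Log}^\vee$ through $i^*j_*$, and there the computation stabilizes.

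The step I expect to be the main obstacle is commuting the colimit $\mathrm{Log}^\vee=\colim_n \mathrm{Log}^\vee_n$ with $\betti^*$ and with $i^*j_*$. The point is that $\betti^*$ is a priori defined only on constructible objects. I would handle this by working with the Ind-extension $\daet\to\dcat_{\geo}=\operatorname{Ind}(\dcat^b_{\geo})$, which preserves colimits, and by using that $j_*$ and $i^*$ preserve colimits in these compactly generated settings (Proposition \ref{prop: compactness of Ayoub's functors}). The commutative square in the statement then follows by restricting to compact objects.
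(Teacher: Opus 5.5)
Your proof is correct, but it takes a different route from the paper's.

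\textbf{The paper's route.} The paper proves compactness first. It invokes Corollary \ref{cor: criterion for compactness} to reduce to one-branch normal-crossing models $f=ug^e$. There it reads off $\log_f^{\an}(\mathds{1})$ from Ayoub's Th\'eor\`eme 11.14 together with Example \ref{ex: examples of sp}. The commutative square is then taken from the compatibility of $\log$ with realizations, which the paper only asserts as ``clear'' in the definition.

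\textbf{Your route.} You reverse the order. You first establish $\betti^*\circ\log_f\simeq\log_f^{\an}\circ\betti^*$ on all of $\daetct(X_\eta)$. You do this explicitly via the Ind-extended realization, using that $\otimes$, $f_\eta^*$, $j_*$, $i^*$ and $\betti^*$ all preserve colimits; this makes precise the step the paper glosses over. You then deduce compactness formally. On the motivic side $\log_f\simeq\Upsilon_f$ preserves compacts. The objects that the exact functor $\log_f^{\an}$ sends to compacts form a thick subcategory. That subcategory contains the generators $p_*\mathbb{Q}_Y=\betti^*(p_*\mathds{1}_Y)$ of $\dcat^b_{\geo}(X_\eta)$.

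\textbf{What each route buys.} Your route avoids the geometric reduction behind Corollary \ref{cor: criterion for compactness}: resolution of singularities, blow-ups, and semi-stable reduction applied to $\log^{\an}$ as a specialization system on $\dcat_{\geo}$. The cost is that you need the realization compatibility on all constructible objects, not just on $\mathds{1}$ in the local model. The paper needs that full compatibility anyway to get the square, so your argument is arguably the more economical one. Your closing ``alternatively'' sentence is essentially the paper's proof.

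A minor point: $\log_f^{\an}$ preserving retracts needs no justification, since every functor preserves retracts.
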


\begin{proof}
    By \ref{cor: criterion for compactness}, to check the compactness, we just have to know the value of $\log_f^{\an}(\mathds{1}_{X_{\eta}}^{\an})$ with $X$ be a smooth $k$-variety and $f = ug^e$ with $u \in \mathcal{O}_X(X)^{\times}$ and $g$ a generator of the defining ideal of $(X_{\sigma})_{\textnormal{red}} \subset X$. Given \cite[Th\'eorème 11.14]{ayoub-2014} and example \ref{ex: examples of sp}, we win. 
\end{proof}
By \cite[(120)]{ayoub-2014}, there exist distinguished triangles 
 \begin{equation*}
     \mathrm{Log}_{m-1}^{\vee} \longrightarrow \mathrm{Log}^{\vee} \overset{N_m}{\longrightarrow} \mathrm{Log}^{\vee}(-m) \longrightarrow +1 
 \end{equation*}
By taking homotopy colimits, we obtain a cofiber sequence
\begin{equation*}
    i^*j_*(M \otimes f_{\eta}^*\mathrm{Log}_{m-1}^{\vee}) \longrightarrow \log_f \overset{N_m}{\longrightarrow} \log_f(-m) \longrightarrow +1.
\end{equation*}
For $m=1$, we obtain the monodromy sequence
\begin{equation*}
    i^*j_* \longrightarrow \log_f \overset{N = N_1}{\longrightarrow} \log_f(-1) \longrightarrow +1.
\end{equation*}
We have the following lemma.
\begin{lem} \label{lem: nilpotency of monodromy operator}
    There is an isomorphism of functors $m! \cdot N_m = N^{\circ m}$. The operator $N \colon \log_f \longrightarrow \log_f(-1)$ is nilpotent on compact motives and consequently, the operator $N_m \colon \log_f \longrightarrow \log_f(-m)$ is nilpotent on compact motives. Moreover, if $M$ is a compact motive, then the sequence
    \begin{equation*}
    i^*j_*(M \otimes f_{\eta}^*\mathrm{Log}_{m-1}^{\vee}) \longrightarrow \log_f(M) \overset{N_m}{\longrightarrow} \log_f(M)(-m) \longrightarrow +1.
    \end{equation*}
    is canonically split for sufficiently large $m=m(M) \geq 0$. 
\end{lem}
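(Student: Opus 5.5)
We argue entirely at the level of the ind-object $\mathrm{Log}^{\vee}$ on $\gm_m$ and then push through the exact functor $M \mapsto i^*j_*(M \otimes f_{\eta}^*(-))$, which commutes with colimits.

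\textbf{The identity $m!\cdot N_m = N^{\circ m}$.} By construction, $\mathrm{Log}^{\vee}_n = \operatorname{Sym}^n(\mathcal{K})$. The operator $N_1 \colon \mathrm{Log}^{\vee}_n \to \mathrm{Log}^{\vee}_{n-1}(-1)$ is the derivation
\begin{equation*}
\operatorname{Sym}^n\mathcal{K} \to \operatorname{Sym}^{n-1}\mathcal{K}\otimes \mathcal{K} \to \operatorname{Sym}^{n-1}\mathcal{K}(-1),
\end{equation*}
obtained from comultiplication followed by the projection $\mathcal{K}\to \mathds{1}(-1)$. Likewise, $N_m$ is comultiplication $\operatorname{Sym}^n \to \operatorname{Sym}^{n-m}\otimes \operatorname{Sym}^m$ followed by $\operatorname{Sym}^m\mathcal{K}\to \operatorname{Sym}^m(\mathds{1}(-1)) = \mathds{1}(-m)$. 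Iterating the derivation $m$ times gives the comultiplication into $\operatorname{Sym}^{n-m}\otimes \mathcal{K}^{\otimes m}$ composed with the symmetrization. On $\operatorname{Sym}^m(\mathds{1}(-1))$ the symmetrization contributes exactly the factor $m!$. This is the usual divided-power identity for a $\mathbb{Q}$-linear symmetric algebra, and it holds compatibly in $n$, hence also in the colimit. We check it in $\daet$ and transport it to $\dn,\dcat_{\geo}$ by realization.

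\textbf{Nilpotence.} By Proposition \ref{prop: compactness of Ayoub's functors}, Corollary \ref{cor: log sp preserves compact motives} and the reductions in Proposition \ref{prop: logarithm of nearby cycles}, it suffices to treat generators $M=g_{\eta,*}\mathds{1}$. Proper base change commutes with $N$, since $N$ is induced by a map of coefficients pulled back along $f_\eta$. So we reduce to $M=\mathds{1}_{X_{\eta}}$ with $X_\sigma$ a normal crossings divisor. Using the same Mayer--Vietoris and blow-up reduction, we further reduce to the local situation $f = ug^{e}$ of Proposition \ref{prop: computation of nearby cycles}(2). There $\log_f(\mathds{1})\simeq \Upsilon_f(\mathds{1}) \simeq \mathds{1}_{X_\sigma}$, and the twist $\mathds{1}(-1)$ together with weight/degree considerations (or Ayoub's computation \cite[Th\'eorème 11.14]{ayoub-2014}) forces $N=0$ on each stratum.

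Gluing the strata gives nilpotence with exponent bounded by the number of strata. This uses that a filtration of length $r$ by pieces on which $N$ vanishes yields $N^{r}=0$. This gluing step is the main obstacle: nilpotence is not obviously stable under cofibers in a stable $\infty$-category. The point is that $N$ is a natural transformation, so $N^{a}$ and $N^{b}$ vanishing on the two ends of a cofiber sequence gives $N^{a+b}=0$ in the middle. Since $\mathbb{Q}\ni 1/m!$, nilpotence of $N_m$ follows from $N_m = N^{\circ m}/m!$.

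\textbf{Splitting.} For compact $M$, choose $m$ with $N^{m}=0$ on $\log_f(M)$. Then $N_m = N^m/m! = 0$, so the cofiber sequence
\begin{equation*}
i^*j_*(M \otimes f_{\eta}^*\mathrm{Log}_{m-1}^{\vee}) \longrightarrow \log_f(M) \overset{N_m}{\longrightarrow} \log_f(M)(-m) \longrightarrow +1
\end{equation*}
has zero middle map and therefore splits:
\begin{equation*}
i^*j_*(M\otimes f_\eta^*\mathrm{Log}^\vee_{m-1}) \simeq \log_f(M)\oplus \log_f(M)(-m)[-1].
\end{equation*}
The splitting is canonical because it is determined by the nullhomotopy $N_m\simeq 0$ coming from the identity $N^{m}=0$. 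The same then holds for every larger $m$.
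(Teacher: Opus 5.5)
Your proof of the first two assertions takes a different, more self-contained route than the paper, which simply cites \cite[Lemme 11.10, Th\'eor\`eme 11.16]{ayoub-2014} for $\daet$ and then transports along realizations. Your divided-power computation on $\operatorname{Sym}^{\bullet}\mathcal{K}$ is essentially Ayoub's lemma. For nilpotence you rerun the reductions of Proposition \ref{prop: logarithm of nearby cycles} compatibly with $N$, and use that $N^{a}=0$ and $N^{b}=0$ on the ends of an $N$-equivariant cofiber sequence force $N^{a+b}=0$ in the middle. That does prove nilpotence. Because it only uses generators and cofiber sequences, it also runs directly in $\dn$ and $\dcat_{\geo}$. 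This matters, since realizations are not essentially surjective on compact objects, a point the paper's one-line transport skips.

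There are two caveats on this part. First, the vanishing of $N$ on a one-branch piece $\mathds{1}_{D_i^{\circ}}$ by weight/degree reasons is valid in $\daet$: there $\Hom(\mathds{1}_{D},\mathds{1}_{D}(-1))$ is motivic cohomology of the smooth scheme $D=D_i^{\circ}$ in weight $-1$, hence zero. It fails in $\dcat_{\geo}$, where this group is $H^0(D^{\an},\mathbb{Q})\neq 0$. So there $N=0$ must be imported from $\daet$ by realization, and not from $N=\log T$, which the paper deduces later from this very lemma. Second, levelwise identities on $\operatorname{Sym}^n\mathcal{K}$ determine maps out of the colimit $\mathrm{Log}^{\vee}$ only up to a $\lim^1$ ambiguity. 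The maps therefore have to be constructed coherently, as Ayoub does with explicit models.

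The genuine gap is the word \emph{canonically} in the last assertion. From $N_m=0$ in the homotopy category you only get that the triangle splits. Each nullhomotopy of $N_m$ gives a section of $i^*j_*(M\otimes f_{\eta}^*\mathrm{Log}^{\vee}_{m-1})\to\log_f(M)$, and these sections form a torsor under $\Hom(\log_f(M),\log_f(M)(-m)[-1])$. Nothing in your argument singles one out. The relation $N^{a+b}=0$ comes from a non-unique factorization through the fiber, and $m!\,N_m=N^{\circ m}$ is only an equality of homotopy classes, so no specific nullhomotopy of $N_m$ is produced.

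The torsor can be nontrivial. In $\dcat_{\geo}$ Tate twists are invisible, so for $M=P\oplus P[1]$ with $P$ perverse the group contains $\Hom(\log_f(P),\log_f(P)(-m))\simeq\operatorname{End}(\log_f(P))$. This is nonzero as soon as $\log_f(P)\neq 0$.

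Canonicity therefore needs an extra input. For $M$ in the perverse heart, which is the case actually used in Proposition \ref{prop: isomorphisms of sp on geometric complexes}, it comes for free once $\log_f(M)[-1]$ is known to be perverse. The group is then a negative $\operatorname{Ext}$ between objects of the heart, so it vanishes and the splitting is unique. In $\daet$ you would need a vanishing of this group for $m\gg 0$, or Ayoub's theorem itself, as the paper uses it.
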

\begin{proof}
    For \'etale motives, these are \cite[Lemme 11.10]{ayoub-2014}, \cite[Théorème 11.16]{ayoub-2014} (see also \cite[Corollaire 3.6.49]{ayoub-thesis-2}). For $\dcat_{\geo}(-)$ and $\dn(-)$, we use the realization functors.
\end{proof}
 \begin{rmk} If $\hbf$ is $\daet(-)$ or $\dcat_{\geo}(-)$, then it is necessary that $N_m$ is nilpotent on constructible motives and uniquely determined. Indeed, the uniqueness of $N_m$ is guaranteed once we can prove \cite[Lemme 11.9]{ayoub-2014} for complexes of geometric origin, which is again based on \cite[Proposition 11.1]{ayoub-2014}. This boils down to proving that $\Hom_{\dcat_{\geo}(\gm_{m,k})}(\mathds{1},\mathds{1}(m)[-1])=0$, which is trivial. 
 \end{rmk}
 
\subsection{The logarithmic specialization systems on complexes of geometric origin}
 In \cite{ayoub-2010}, Ayoub proves that motivic nearby functors of \'etale motives and analytic sheaves are compatible under the Betti realization. Without a doubt that the same result holds true for motivic unipotent nearby functors and for unipotent nearby functors, one should expect a further compatibility of monodromy triangles. Let us review the construction of the unipotent nearby functor in the classical setting and the logarithm of the monodromy action. Let $f \colon X \longrightarrow \mathbb{A}_k^1$ be a morphism of $k$-varieties. We consider the following diagram
 \begin{equation*}
    \begin{tikzcd}[sep=large]
      \widetilde{X}_{\eta} \arrow[r,"\operatorname{exp}"] & X_{\eta} \arrow[r,"j_f"]  \arrow[d,"f_{\eta}",swap] & X  \arrow[d,"f"] & X_{\sigma} \arrow[d,"f_{\sigma}"] \arrow[l,"i_f",swap] \\ 
        \widetilde{\gm_{m,k}^{\an}} \arrow[r,"\operatorname{exp}"] &  \mathbb{G}_{m,k}^{\an} \arrow[r,"j_{\id}"] & \mathbb{A}_k^{1,\an} &  \Spec(k)^{\an}. \arrow[l,"i_{\id}",swap],
    \end{tikzcd}
    \end{equation*} 
where $f^{\an} \colon X^{\an} \longrightarrow \mathbb{A}_k^{1,\an} = \mathbb{C}$ is the associated function of analytic varieties, $\widetilde{\gm_{m,k}^{\an}}$ is the universal cover of $\gm_{m,k}^{\an}$. The full nearby functor is defined as 
 \begin{align*}
     \Psi_f^{\an} \colon \dcatct^b(X_{\eta}) & \longrightarrow \dcatct^b(X_{\sigma}) \\ 
     M & \longmapsto i^*j_*\operatorname{exp}_*\operatorname{exp}^*(M)
 \end{align*}
 (we note that this functor is defined on $\dcat^+(X_{\eta}) \longrightarrow \dcat^+(X_{\sigma})$, its constructibility is a nontrivial result). Any $g \in \pi_1(\gm_{m,k}) \simeq \mathbb{Z}$ induces an automorphism $g \colon  \widetilde{X}_{\eta} \overset{\sim}{\longrightarrow}  \widetilde{X}_{\eta}$ and hence acts naturally on the full nearby functor $\Psi_f^{\an}$ by $i^*j_*\operatorname{exp}_*\operatorname{exp}^*(g)$. In particular, the canonical generator $g_1 \colon t \mapsto \exp(2\pi i t)$ induces a \textit{monodromy action} or \textit{monodromy automorphism}
 \begin{equation*}
     T \colon \Psi_f^{\an}(M) \longrightarrow \Psi_f^{\an}(M)
 \end{equation*}
 for any $M \in \dcatct^b(X_{\eta})$ and hence for any $M \in \dcat^b_{\geo}(X_{\eta})$. The category $\dcat^b_{\geo}(X_{\eta})$ is a \textit{Krull-Schmidt category} so it induces a decomposition
 \begin{equation*}
     \Psi_f^{\an}(M) = \bigoplus_{\alpha \in \mathbb{Q}[x] \ \textnormal{irreducible,monic}} \Psi_f^{\an,\alpha}(M).
 \end{equation*}
 The functor $\Psi_f^{\an,x-1}(M) = \Upsilon_f^{\an}(M)$ ($\alpha(x)=x-1$) is called the \textit{unipotent nearby functor}. By the Beilinson's equivalence (see \cite{beilinson-1987-2} and the appendix, theorem \ref{thm: Beilinson's theorem for geometric complexes}), it sufffices to know $\Upsilon_f^{\an}[-1]$ on $\perv_{\geo}(X_{\eta})$ and then takes the derived functor. On perverse sheaves, we have the following construction due to Beilinson \cite{beilinson-1987-1} (see also \cite{reich-2010}\cite{morel-2018}\cite{achar-book}). Let $\mathcal{J}_n \in \operatorname{Loc}(\gm_{m,k})$ be the local system corresponding to the module $J_n=\mathbb{Q}[T,T^{-1}]/((T-1)^n)$ under the monodromy correspondence. By \cite[Proposition 4.4.3]{achar-book}, for $M \in \perv_{\geo}(X_{\eta})$, then for any sufficiently large $m \geq 0$, one has that
 \begin{equation*}
     \Upsilon_f^{\an}(M) = \phnor^{-1}(\chi_f(M \otimes f_{\eta}^*(\jcal_m(1-m)))).
 \end{equation*}
and the logarithm of the monodromy action $\Upsilon_f^{\an}(M) \longrightarrow \Upsilon_f^{\an}(M)(-1)$ is given by a morphism $\jcal_m(1) \longrightarrow \jcal_m$, which corresponds to the matrix 
\begin{equation*}
\begin{pmatrix}
0 & 0 & \cdots & 0 & 0 \\
1 & 0 & \cdots & 0 & 0 \\
0 & 1 & \ddots & \vdots & \vdots \\
\vdots & \ddots & \ddots & 0 & 0 \\
0 & \cdots & 0 & 1 & 0
\end{pmatrix}
\end{equation*}
under the monodromy correspondence. There is a canonical isomorphism of complexes $\mathrm{Log}_{n-1}^{\vee} \simeq \mathcal{J}_n(1-n)$. Indeed, by \cite[(4.4.2)]{achar-book}, there is a canonical short exact sequence of local systems
    \begin{equation*}
        0 \longrightarrow \mathbb{Q}_{\gm_{m,k}}(n-1) \longrightarrow \mathcal{J}_n \longrightarrow \mathcal{J}_{n-1} \longrightarrow 0.
    \end{equation*}
    In particular, $\mathcal{J}_0 = 0, \mathcal{J}_1 = \mathbb{Q}_{\gm_{m,k}}$ and $\jcal_2= \mathcal{K}^{\an}(+1)$ and hence $\jcal_n = \operatorname{Sym}^{n-1}(\jcal_2) = \operatorname{Sym}^{n-1}(\kcal^{\an}(+1)) = \mathrm{Log}^{\vee}_{n-1}(n-1)$. The main result of this section is that the constructions of the logarithm of the monodromy action due to Ayoub and Beilinson coincide. We begin with the following.
\begin{prop} \label{prop: isomorphisms of sp on geometric complexes}
    There are isomorphisms of specialization systems $\Upsilon_f^{\anor} \simeq \operatorname{log}_f^{\an} \simeq \Upsilon_f^{\an} \colon \dcat_{\geo}(X_{\eta}) \longrightarrow \dcat_{\geo}(X_{\sigma})$. In particular, there is a monodromy sequence
    \begin{equation*}
        i^*j_* \longrightarrow \Upsilon_f^{\an} \overset{N}{\longrightarrow} \Upsilon_f^{\an}(-1) \longrightarrow +1
    \end{equation*}
    where $N$ is nilpotent on $\dcat^b_{\geo}(-)$.
\end{prop}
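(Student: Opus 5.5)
The plan is to prove the two isomorphisms separately and then read off the monodromy sequence from the one already constructed for $\log_f^{\an}$.

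\emph{First isomorphism, $\Upsilon_f^{\anor}\simeq\log_f^{\an}$.} On \'etale motives Ayoub provides an isomorphism $\log_f\simeq\Upsilon_f^{\anor}$ of specialization systems \cite[Th\'eor\`eme 11.14]{ayoub-2014}, the one used in Corollary \ref{cor: log sp preserves compact motives}. Both sides are defined by the formula $i^*j_*(-\otimes f_\eta^*(\cdot))$ applied to ind-objects ($\mathrm{Log}^\vee$, resp.\ $\uscr^{\un}$) built from the unit. The Betti realization commutes with the six operations and with colimits, and it sends $\kcal$ to $\kcal^{\an}$ by Lemma \ref{lem: monodromy of Kummer motive}. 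Hence it carries this isomorphism to an isomorphism $\log_f^{\an}\simeq\Upsilon_f^{\anor}$ on $\dcat_{\geo}(-)$, compatible with the maps $\alpha_g,\beta_g$.

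\emph{Second isomorphism, $\log_f^{\an}\simeq\Upsilon_f^{\an}$.} By Beilinson's equivalence (Theorem \ref{thm: Beilinson's theorem for geometric complexes}) it suffices to compare the two functors, shifted by $[-1]$, on $\perv_{\geo}(X_\eta)$, naturally in $M$. The steps are as follows.

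1. Use the identification $\mathrm{Log}^\vee_{m-1}\simeq\jcal_m(1-m)$ just established. This gives
\[
\log_f^{\an}(M)=\colim_m i^*j_*\bigl(M\otimes f_\eta^*\jcal_m(1-m)\bigr).
\]

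2. By Lemma \ref{lem: nilpotency of monodromy operator} and compactness (Corollary \ref{cor: log sp preserves compact motives}), the colimit stabilizes at a finite $m=m(M)$. This is also where the colimit is shown to be harmless.

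3. Beilinson's formula gives $\Upsilon_f^{\an}(M)=\phnor^{-1}\chi_f\bigl(M\otimes f_\eta^*\jcal_m(1-m)\bigr)$ for $m\gg0$. Here $\chi_f=i^*j_*$, possibly up to a shift convention. So one must check that $i^*j_*\bigl(M\otimes f_\eta^*\jcal_m(1-m)\bigr)[-1]$ is concentrated in perverse degree $0$ for $m$ large.

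4. That concentration comes from the stable splitting in Lemma \ref{lem: nilpotency of monodromy operator}. The transition maps $\jcal_m\to\jcal_{m+1}$ induce isomorphisms on the middle perverse cohomology and zero maps on the outer ones (Beilinson's argument, see \cite[\S4.4]{achar-book}). Therefore the stabilized colimit kills the outer perverse cohomologies and recovers exactly $\Upsilon_f^{\an}[-1]$.

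5. Naturality in $M$ and compatibility with $\alpha_g,\beta_g$ follow because every construction is by six operations, or by $\phnor$ of them.

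\emph{Fallback for the second isomorphism.} If the direct matching is awkward, use Corollary \ref{cor: criterion for iso of sp}. Construct a morphism of specialization systems $\log_f^{\an}\to\Psi_f^{\an}$ by sending $\mathrm{Log}^\vee\to\exp_*\exp^*\mathds{1}$, which exists since the $\jcal_m$ are unipotent and trivialize on the universal cover. This map lands in the unipotent summand $\Upsilon_f^{\an}$, since $T$ acts unipotently on $\jcal_m$. Then check it is an isomorphism on $\mathds{1}_{X_\eta}$ for $f=ug^e$ with $X$ smooth: by \cite[Th\'eor\`eme 11.14]{ayoub-2014} and Example \ref{ex: examples of sp}, both sides are $\mathds{1}_{X_\sigma}$.

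\emph{The monodromy sequence.} Transport $i^*j_*\to\log_f^{\an}\xrightarrow{N}\log_f^{\an}(-1)$ along the isomorphism $\log_f^{\an}\simeq\Upsilon_f^{\an}$. Nilpotence of $N$ on $\dcat^b_{\geo}$ then comes from Lemma \ref{lem: nilpotency of monodromy operator}.

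\emph{Main obstacle.} I expect the hardest step to be the second isomorphism: making the comparison natural and compatible with the specialization-system structure, rather than just objectwise. In particular one must check that the limit/colimit conventions and Tate twists in Beilinson's construction match Ayoub's $\mathrm{Log}^\vee$, and that the map $\log_f^{\an}\to\Psi_f^{\an}$ really lands in the unipotent summand, which is only defined via a Krull--Schmidt decomposition. I would try the fallback via Corollary \ref{cor: criterion for iso of sp} first, since it reduces everything to the one-branch computation.
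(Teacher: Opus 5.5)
\textbf{First isomorphism and monodromy sequence.} These are essentially the paper's argument. One caveat on the first isomorphism: $\betti^*$ is not full, so transporting Ayoub's isomorphism only gives isomorphisms at objects $\betti^*(M)$, natural only in motivic morphisms. The argument works because the transformation is induced by a map of kernels $\kcal\to\uscr^{\un}$, as the paper uses. It is therefore defined on all of $\dcat_{\geo}(X_\eta)$. It is an isomorphism on the generators $p_{\eta,*}\mathds{1}$, equivalently on one-branch models via Corollary~\ref{cor: criterion for iso of sp}, and hence everywhere. You should say this explicitly.

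\textbf{Primary route to the second isomorphism: Steps 2 and 3 are false as written.} In Step~2 the terms do not stabilize. By Lemma~\ref{lem: nilpotency of monodromy operator}, for $m\gg0$ one has $i^*j_*(M\otimes f_\eta^*\mathrm{Log}^\vee_{m-1})\simeq\log_f^{\an}(M)\oplus\log_f^{\an}(M)(-m)[-1]$, and the second summand changes with $m$. Step~3 then fails. For $M$ perverse, write $\log_f^{\an}(M)=P[1]$. The term shifted by $[-1]$ is $P\oplus P(-m)[-1]$, which has nonzero perverse cohomology in degrees $0$ and $1$ whenever $P\neq0$. So it is not perverse.

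What is true, and what the paper proves, is the following. First, $\log_f^{\an}(M)[-1]$ is perverse. The paper gets this from the monodromy triangle and nilpotence of $N$. It also follows from the splitting: $\chi_f$ has perverse amplitude $[-1,0]$, and both $\log_f^{\an}(M)$ and $\log_f^{\an}(M)(-m)[-1]$ are summands. Second, $\phnor^{-1}$ of the term is naturally $\log_f^{\an}(M)[-1]$, which matches Beilinson's formula. One then passes to derived functors. Compatibility with $\alpha_g$ is checked separately for closed immersions (via $\beta_g$) and for smooth $g$ (via $g^\dagger$). Your vague Step~4 (``middle'' versus ``outer'' cohomologies) gestures at this but needs to be replaced by it. Step~5's ``every construction is by six operations'' is also too quick here, because $\phnor$ is not a six-functor operation.

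\textbf{Fallback route: sound and genuinely different.} A kernel map $\mathrm{Log}^\vee\to\exp_*\exp^*\mathds{1}$ gives a morphism of specialization systems $\log_f^{\an}\to\Psi_f^{\an}$. It factors through the unipotent summand, for these reasons:
\begin{itemize}
\item on compact $M$, $\log_f^{\an}(M)$ is a retract of a finite stage, whose image is killed by a power of $T-1$;
\item $T-1$ is invertible on the other Krull--Schmidt summands;
\item $\alpha_g$ commutes with $T$, so the factorization is compatible with $\alpha_g$.
\end{itemize}
Corollary~\ref{cor: criterion for iso of sp} then reduces everything to one-branch models. There you must show the induced map $\mathds{1}_{X_\sigma}\to\mathds{1}_{X_\sigma}$ is nonzero on each component, not merely that both sides are $\mathds{1}$. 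A stalk computation does this: the Milnor fibre is $n$ points permuted cyclically, and the argument works provided your functional on the fibre of $\mathrm{Log}^\vee$ is nonzero on the image of $\mathds{1}\to\mathrm{Log}^\vee$.

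\textbf{Comparison with the paper.} Your fallback mirrors the paper's argument for the \emph{first} isomorphism. It avoids perverse sheaves and Beilinson's equivalence, and gets compatibility with $\alpha_g$ for free from the kernel-level construction. The paper's route instead directly identifies $\log_f^{\an}$ with Beilinson's explicit construction, via $\log_f^{\an}(M)[-1]\simeq\phnor^{-1}\chi_f(M\otimes f_\eta^*\mathrm{Log}^\vee_{m-1})$, and shows perverse $t$-exactness along the way. That is the form in which the result feeds the subsequent comparison of $N$ with the classical monodromy logarithm.
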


 \begin{proof}
The first isomorphism $\Upsilon_f^{\anor} \simeq \operatorname{log}_f^{\an}$ can be verified similarly as in \cite[Th\'eorème 11.14]{ayoub-2014}: by \cite{ayoub-thesis-2}, one has a morphism $\kcal^{\an} \longrightarrow \uscr^{\un}$ and this results in a morphism of specialization system $\log^{\an} \longrightarrow \Upsilon_f^{\anor}$. By \ref{cor: criterion for iso of sp} and the know values of both $\Upsilon_f^{\anor},\log_f^{\an}$ on local-crossing model with one branch, we see that this is indeed an isomorphism of specialization systems. Concern the second one, let us first prove that $\log_f^{\an}(M) \in \perv_{\geo}(X_{\sigma})$ is perverse provided that $M \in \perv_{\geo}(X_{\sigma})$. By looking at the long exact sequence of the monodromy sequence 
\begin{equation*}
    \log_f^{\an}[-1] \longrightarrow \log_f^{\an}[-1](-1) \longrightarrow i^*j_* \longrightarrow +1
\end{equation*}
and the fact that $\chi_f$ concentrates on perverse degrees $0,-1$ (by proposition \ref{prop: perversity of operations on geometric perverse sheaves}), we see that $\log_f^{\an}[-1]$ concentrates on perverse degree $-1,0,1$. For $i \in \left \{-1,1 \right \}$, the map $\phnor^i(\log_f^{\an}[-1]) \longrightarrow \phnor^i(\log_f^{\an}[-1](-1))$ is either injective or surjective and nilpotent and hence $\phnor^i(\log_f^{\an}[-1])=0$. Consequently, $\log_f^{\an}[-1] = \phnor^0(\log_f[-1])$ is perverse. Now we apply a similar version of \cite[Lemme 11.19]{ayoub-2014} to see that there is a morphism of triangles
\begin{equation*}
\begin{tikzcd}[sep=large]
    i^*j_*(M \otimes f_{\eta}^*\mathrm{Log}^{\vee}_{m-1}) \arrow[r] \arrow[d] & \operatorname{log}_f^{\an}(M) \arrow[r,"N_m"] \arrow[d] &  \operatorname{log}_f^{\an}(M)(-m) \arrow[r] \arrow[d] & +1 \\
i^*j_*(N \otimes f_{\eta}^*\mathrm{Log}^{\vee}_{m-1}) \arrow[r] & \operatorname{log}_f^{\an}(N) \arrow[r,"N_m"] &  \operatorname{log}_f^{\an}(N)(-m) \arrow[r] & +1
    \end{tikzcd}
\end{equation*}
with $N_m$ being nilpotent and the sequence canonically splits for sufficiently large $m \geq 0$ thanks to lemma \ref{lem: nilpotency of monodromy operator}. In particular, there is an isomorphism 
\begin{equation*}
    i^*j_*(M \otimes f_{\eta}^*(\mathrm{Log}_{m-1}^{\vee})) = \log_f^{\an}(M) \oplus \log_f^{\an}(M)[-1](-m).
\end{equation*}
From the long exact sequence of perverse cohomology, we see that there is a commutative square with isomorphisms horizontally 
\begin{equation*}
\begin{tikzcd}[sep=large]
    \phnor^{-1}(i^*j_*(M \otimes f_{\eta}^*(\mathrm{Log}^{\vee}_{m-1}))) \arrow[r,"\sim"] \arrow[d] & \operatorname{log}_f^{\an}(M) \arrow[d] \\ 
\phnor^{-1}(i^*j_*(N \otimes f_{\eta}^*(\mathrm{Log}^{\vee}_{m-1}))) \arrow[r,"\sim"] & \operatorname{log}_f^{\an}(N).
    \end{tikzcd}
\end{equation*}
This coincides with the usual unipotent nearby functor and we see that $\log_f^{\an} \simeq \Upsilon_f^{\an}$ functorially. It remains to prove that this is a morphism of specialization systems, namely, to construct a natural transformation $\log_f^{\an} \overset{\sim}{\longrightarrow} \Upsilon_f^{\an}$ commuting with base change transformations $\alpha_g$. It suffices to separate two cases where $g$ is smooth or $g$ is a closed immersion. Let us treat the latter case first, the first diagram has all morphisms being morphisms of specialization systems, hence it induces the left commutative diagram (functorial in $M$, as showed above)
\begin{equation*}
\begin{tikzcd}[sep=large]
    \phnor^{-1}(\chi_f(g_{\eta,*}(M) \otimes f_{\eta}^*(\mathrm{Log}^{\vee}_{m-1}))) \arrow[r,"\sim"] \arrow[d] & \operatorname{log}_f^{\an}(g_{\eta,*}(M)) \arrow[d] \\ 
g_{\sigma,*}\phnor^{-1}(\chi_{f \circ g}(M \otimes f_{\eta}^*(\mathrm{Log}^{\vee}_{m-1}))) \arrow[r,"\sim"] & g_{\sigma,*}\operatorname{log}_{f \circ g}^{\an}(M)
    \end{tikzcd} \ \ \ \ \begin{tikzcd}[sep=large]
    \Upsilon_f^{\an}g_{\eta,*}(M) \arrow[r,"\sim"] \arrow[d] & \operatorname{log}_f^{\an}(g_{\eta,*}(M)) \arrow[d] \\ 
g_{\sigma,*}\Upsilon_{f \circ g}^{\an}(M) \arrow[r,"\sim"] & g_{\sigma,*}\operatorname{log}_{f \circ g}^{\an}(M)
    \end{tikzcd}
\end{equation*}
and hence by taking derived functors, we obtain the right commutative diagram. By taking adjoint of the right diagram, we obtain 
\begin{equation*}
    \begin{tikzcd}[sep=large]
        g_{\sigma}^*\Upsilon^{\an}_f \arrow[r] \arrow[d] &  g_{\sigma}^*\log^{\an}_f  \arrow[d]  \\ 
        \Upsilon_{f \circ g}^{\an} g_{\eta}^* \arrow[r] & \log^{\an}_{f \circ g} g_{\eta}^*.
    \end{tikzcd}
\end{equation*}
If $g$ is smooth of relative dimension $d$, we repeat the argument with $g^{\dagger}$. 
 \end{proof}

 \begin{cor}
     Modulo proposition \ref{prop: logarithm of nearby cycles}, there is an isomorphism of specialization systems $\Psi_f^{\anor} \simeq \Psi_f^{\an}$ on $\dcat_{\geo}(-)$. 
 \end{cor}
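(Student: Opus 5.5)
The plan is to compare the two full nearby functors through their unipotent parts after Kummer base change. Proposition \ref{prop: isomorphisms of sp on geometric complexes} gives an isomorphism of specialization systems $\Upsilon^{\anor} \simeq \Upsilon^{\an}$ on $\dcat_{\geo}(-)$. The full functors should then be recovered as (eventually stationary) colimits over $n$ of $\Upsilon_{f_n}(e_n)_\eta^*$.

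\textbf{Step 1: transition maps.} On the motivic side, Ayoub's formula gives
\begin{equation*}
\Psi_f^{\anor} = \colim_{n} \Upsilon_{f_n}^{\anor}(e_n)_{\eta}^*,
\end{equation*}
with transitions from \cite[Lemma 4.10]{ayoub-nearby-cycles}. On the analytic side I construct the analogous maps $\Upsilon^{\an}_{f_n}(e_n)^*_\eta \to \Upsilon^{\an}_{f_{nm}}(e_{nm})^*_\eta$ and $\Upsilon^{\an}_{f_n}(e_n)_\eta^* \to \Psi_f^{\an}$. These come from the identification $\Psi^{\an}_{f_n}(e_n)_\eta^*\simeq \Psi^{\an}_f$, since the universal cover of $\gm_m$ factors through $e_n$, composed with the inclusion of the unipotent summand. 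Modulo checking the transitions, the isomorphism $\Upsilon^{\anor}\simeq\Upsilon^{\an}$ from Proposition \ref{prop: isomorphisms of sp on geometric complexes}, applied to $f_n$, yields a morphism of specialization systems $\Psi_f^{\anor} \to \colim_n \Upsilon^{\an}_{f_n}(e_n)^*_\eta \to \Psi_f^{\an}$, compatible with $\alpha_g$ because each piece is.

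\textbf{Step 2: isomorphism by reduction to the local model.} I then apply Corollary \ref{cor: criterion for iso of sp}. Because of the phrase ``modulo proposition \ref{prop: logarithm of nearby cycles}'', I can use that on compact $M$ the motivic colimit stabilizes at $n_0(M)$. On the analytic side, it is classical that for $n$ divisible by the orders of the eigenvalues of $T$ the monodromy of $\Psi^{\an}_{f_n}(e_n)^*_\eta M$ becomes unipotent, so $\Upsilon^{\an}_{f_n}(e_n)^*_\eta M\simeq\Psi^{\an}_f M$. One can also argue via Example \ref{ex: examples of sp}(2),(3) on the one-branch model $f=ug^e$, where both sides are $(r_e)_*\mathds{1}_{D_e}$ by Proposition \ref{prop: computation of nearby cycles}(2) and \cite[Th\'eor\`eme 4.9]{ayoub-2010}. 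So it suffices to check the map on $\mathds{1}_{X_\eta}$ for that model. After base change by $e_n$ with $e\mid n$, $f_n$ has unit-times-$n$-th-power form locally, and both unipotent sides equal $\mathds{1}$. The map is then identified with $\Upsilon^{\anor}\simeq\Upsilon^{\an}$ on $\mathds{1}$, which is an isomorphism. Both sides commute with direct sums, so the isomorphism extends from compacts to all of $\dcat_{\geo}$.

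\textbf{Main obstacle.} The hardest step is Step 1: making the analytic transition maps $\Upsilon_{f_n}(e_n)^* \to \Upsilon_{f_{nm}}(e_{nm})^*$ agree with Ayoub's transitions under $\Upsilon^{\anor}\simeq\log^{\an}\simeq\Upsilon^{\an}$. I would try to check this on the Kummer/Log level: the map $e_m^*\mathrm{Log}^\vee\to\mathrm{Log}^\vee$ is multiplication by $m^k$ on the graded pieces, which corresponds to $\jcal_k\to\jcal_k$ induced by $T\mapsto T^m$. Uniqueness of the Kummer extension (Lemma \ref{lem: monodromy of Kummer motive} and the subsequent remark) rigidifies these maps, since any two candidates agree up to this unique normalization.
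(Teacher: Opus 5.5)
Your outline is the deduction the paper intends; the paper gives no separate proof of this corollary. The idea is to write both full functors as colimits of $\Upsilon_{f_n}(e_n)_\eta^*$, which are stationary on compact objects, and to apply the isomorphism $\Upsilon^{\anor}\simeq\Upsilon^{\an}$ of Proposition~\ref{prop: isomorphisms of sp on geometric complexes} to each $f_n$. You are also right that the real content is the compatibility of the transition maps, which the paper does not address. Since $n_0(M)$ depends on $M$, the levelwise isomorphisms give no natural transformation without it. The gap is that your proposed way of settling this step fails.

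Uniqueness of the Kummer extension (Lemma~\ref{lem: monodromy of Kummer motive}) is uniqueness of an object up to a non-unique isomorphism. It does not rigidify degree-zero maps in $\dcat_{\geo}(\gm_{m,k})$, because Tate twists are invisible there: $\Hom(\mathds{1}(-1),\mathds{1})\simeq\mathbb{Q}\neq 0$.

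Concretely, work on the stalk at $1$, with $v_0$ spanning $\mathds{1}$ and $Nv_1=v_0$. For every $c\in\mathbb{Q}$, the map $v_0\mapsto v_0$, $v_1\mapsto mv_1+cv_0$ is an equivariant map $e_m^*\kcal^{\an}\to\kcal^{\an}$ with graded pieces $(1,m)$. Composing the identification $\kcal^{\an}(1)\simeq\jcal_2$ with the automorphism $1+bN$ shifts $c$ by $(m-1)b$. So, for a badly chosen identification, the realized Ayoub transitions and the $T\mapsto T^m$ transitions disagree for every $m\geq 2$, even though all graded pieces match.

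The remark after Lemma~\ref{lem: nilpotency of monodromy operator} only controls $\Hom(\mathds{1},\mathds{1}(m)[-1])$, that is, the extension class and $N_m$. The rigidity you invoke holds motivically, where $\Hom_{\daet(\gm_{m,k})}(\mathds{1}(-1),\mathds{1})=0$, but not after realization.

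What is needed is a base-point normalization.
\begin{enumerate}
\item A map of local systems on the connected space $\gm_{m}^{\an}$ is determined by its stalk at $1$. There, each transition has distinct scalars on the two graded pieces, so it singles out a complement to the sub-line, and this complement is the same for all $m$.
\item For Ayoub's transitions, this complement is the Betti realization of the splitting of $1^*\kcal$ in $\daet(k)$. That splitting is forced by the existence of the transition and is unique there.
\item For the analytic transitions, it is the line of the cyclic generator $1$, i.e.\ the branch of $\log$ through the base point of the universal cover.
\item The identification $\kcal^{\an}(1)\simeq\jcal_2$, and hence the isomorphism of Proposition~\ref{prop: isomorphisms of sp on geometric complexes}, must be chosen so that these two lines match.
\item You must also check that both transition systems on $\mathrm{Log}^\vee$ are determined by their effect on $\kcal$ via the $\operatorname{Sym}$-structure.
\end{enumerate}

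Once this is done, your Step~2 is unnecessary. A colimit of isomorphisms is an isomorphism, and quasi-unipotence identifies $\colim_n\Upsilon^{\an}_{f_n}(e_n)_\eta^*$ with $\Psi^{\an}_f$.
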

\subsection{Comparison of monodromy operators}
It remains to show that the operator $N \colon \Upsilon_f^{\an} \longrightarrow \Upsilon_f^{\an}(-1)$ is indeed the usual monodromy operator. We begin by some prerequisites, which already appears in the \'etale-motivic setting in \cite{ayoub-2014}. 
\begin{lem} \label{lem: monodromy matrix}
    Let $\sp$ be either $\Psi_f^{\an},\Upsilon_f^{\an}$, there is a canonical $\pi_1(\gm_{m,k})$-equivariant isomorphism
    \begin{equation*}
        \sp_f(M \otimes f_{\eta}^*\mathrm{Log}_{m-1}^{\vee}) \simeq \sp_f(M) \otimes_{\mathbb{Q}}  \left( \mathbb{Q} \oplus \mathbb{Q}(-1) \oplus \cdots \oplus \mathbb{Q}(1-m) \right)
    \end{equation*}
    where on the right hand side, $\Psi_f^{\an}(M)$ is acted on by the monodromy action and $\bigoplus_{i=0}^{m-1}\mathbb{Q}(-i)$ is acted on by the matrix
     \begin{equation*}
         \lnor_{m-1} =    \begin{pmatrix}
1 & -\binom{n}{1} & -\binom{n}{2}  & \cdots  &-\binom{n}{n} \\
 & 1 & -\binom{n-1}{1} & \cdots & -\binom{n-1}{n-1} \\
 &  & \ddots &  &  \\
 &  &  & 1 & -1 \\
0 &  &  &  & 1 \\
\end{pmatrix} 
\end{equation*}
\end{lem}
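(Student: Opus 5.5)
The plan is to reduce everything to the fact that $f_{\eta}^*\mathrm{Log}_{m-1}^{\vee}$ is a unipotent local system pulled back from the base $\gm_{m,k}$, and that after pulling back along $\exp$ it becomes constant. Concretely, $\mathrm{Log}_{m-1}^{\vee} = \operatorname{Sym}^{m-1}(\kcal^{\an}) = \jcal_m(1-m)$. Its underlying $\mathbb{Q}$-vector space at a base point is $\bigoplus_{i=0}^{m-1}\mathbb{Q}(-i)$, using the canonical filtration coming from the short exact sequences $0 \to \mathbb{Q}(n-1) \to \jcal_n \to \jcal_{n-1} \to 0$. The monodromy is $\operatorname{Sym}^{m-1}$ of the matrix $L_2$ of lemma \ref{lem: monodromy of Kummer motive}. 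On the basis $e_1^{a}e_2^{b}$ of $\operatorname{Sym}^{m-1}$, with $e_1 \mapsto e_1$ and $e_2 \mapsto e_2 - e_1$, the binomial theorem gives the upper triangular matrix with entries $\pm\binom{k}{j}$. Up to the normalization of signs and basis, this is $\lnor_{m-1}$; I will fix the basis so that the signs match the statement. This step is pure linear algebra.

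Next, for $\sp = \Psi_f^{\an}$, I use the projection formula along $\exp$. Since $\exp^* f_{\eta}^*\mathrm{Log}^{\vee}_{m-1}$ is a local system on the simply connected space $\widetilde{\gm_{m,k}^{\an}}$ pulled back to $\widetilde{X}_{\eta}$, there is a canonical trivialization
\begin{equation*}
\exp^* f_{\eta}^*\mathrm{Log}^{\vee}_{m-1} \simeq \mathbb{Q}_{\widetilde{X}_{\eta}} \otimes_{\mathbb{Q}} V, \qquad V = \textstyle\bigoplus_{i=0}^{m-1}\mathbb{Q}(-i),
\end{equation*}
obtained by evaluating at a base point of the universal cover. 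Then
\begin{equation*}
i^*j_*\exp_*\exp^*(M \otimes f_{\eta}^*\mathrm{Log}^{\vee}_{m-1}) \simeq i^*j_*\exp_*(\exp^*M \otimes V) \simeq \Psi_f^{\an}(M) \otimes_{\mathbb{Q}} V,
\end{equation*}
because $V$ is a finite-dimensional constant vector space and all functors involved are additive. The key point is equivariance. A deck transformation $g \in \pi_1$ acts on $\exp_*\exp^*(M \otimes L)$ through both factors. Under the trivialization, the action on $\exp^*L$ becomes the constant automorphism of $V$ given by the monodromy of $L$, which is the matrix above. Hence $T$ on the left corresponds to $T_{\Psi}\otimes \lnor_{m-1}$ on the right. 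I expect this to be the main obstacle. One must check carefully that the identification $\exp^*L \simeq V$ intertwines the deck action with the monodromy representation, and not with its inverse. The sign and inversion convention determines whether one gets $L_2$ or $L_2^{-1}$, and hence the signs in $\lnor_{m-1}$. I would first verify this for $m=2$ on $\kcal^{\an}$ alone, with $M=\mathds{1}$ and $f=\id$, and then propagate by $\operatorname{Sym}$.

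Finally, for $\sp = \Upsilon_f^{\an}$, I take the unipotent summand in the Krull–Schmidt decomposition. Since $\lnor_{m-1}$ is unipotent, the generalized eigenspace decomposition of $\Psi_f^{\an}(M)\otimes V$ for $T\otimes \lnor_{m-1}$ is $\bigoplus_{\alpha}\Psi^{\an,\alpha}_f(M)\otimes V$. Indeed, $(T\otimes \lnor)-\lambda$ and $T-\lambda$ differ by a nilpotent term commuting with them, so the $\alpha$-primary parts match. Taking the $x-1$ part gives the statement for $\Upsilon_f^{\an}$, compatibly with the $\pi_1$-action. Naturality in $M$ is clear at every step, so the isomorphism is canonical.
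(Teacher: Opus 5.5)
Your proposal is correct. It ends at the same computation as the paper, but it justifies the factorization step differently.

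The paper just cites a Künneth formula, $\sp_f(M\otimes f_\eta^*\mathrm{Log}^\vee_{m-1})\simeq\sp_f(M)\otimes f_\sigma^*\sp_{\id}(\mathrm{Log}^\vee_{m-1})$. It then writes $\sp_{\id}(\mathrm{Log}^\vee_{m-1})=\operatorname{Sym}^{m-1}(\sp_{\id}(\kcal^{\an}))$, where $\sp_{\id}(\kcal^{\an})\simeq\mathds{1}\oplus\mathds{1}(-1)$ carries $L_2$, and treats $\Psi$ and $\Upsilon$ uniformly.

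You instead prove the factorization directly:
\begin{itemize}
\item $\exp^*f_\eta^*\mathrm{Log}^\vee_{m-1}$ is pulled back from the contractible $\widetilde{\gm_{m,k}^{\an}}$, so it is canonically $\underline{V}$ with $V=\Gamma(\widetilde{\gm_{m,k}^{\an}},\exp^*\mathrm{Log}^\vee_{m-1})$, which is just $\Psi^{\an}_{\id}(\mathrm{Log}^\vee_{m-1})$.
\item Additivity then pulls $V$ out of $i^*j_*\exp_*$, with the deck group acting diagonally.
\item You get $\Upsilon$ by passing to primary components. This is legitimate: $T\otimes\lnor_{m-1}$ and $T\otimes 1$ differ by a commuting unipotent factor, so they have the same primary idempotents in the finite-dimensional endomorphism algebra.
\end{itemize}
In effect you prove the special case of the Künneth formula that the paper invokes. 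Your route is more elementary and makes the $\pi_1$-equivariance explicit, which the paper leaves implicit. The paper's version is shorter and formally parallels the motivic argument.

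Two side remarks.

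First, the inversion issue you single out is harmless for this lemma. Since $L_2^{-1}=\operatorname{diag}(1,-1)\,L_2\,\operatorname{diag}(1,-1)$, the convention only changes the sign of $e_2$; it matters only later, when $N$ is compared with $\log T$.

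Second, your plan to ``fix the basis so that the signs match the statement'' cannot be carried out if you only rescale the summands $\mathbb{Q}(-i)$. That is the only freedom compatible with the projections onto $\Psi_f^{\an}(M)(-r)$ used in the next proposition.
\begin{itemize}
\item In the basis $\binom{m-1}{b}e_1^{m-1-b}e_2^b$, the matrix $\operatorname{Sym}^{m-1}(L_2)$ has entries $(-1)^{c-r}\binom{m-1-r}{c-r}$. This is the binomial pattern of $\lnor_{m-1}$, but with alternating signs.
\item For $m-1\ge 2$, no rescaling makes all off-diagonal entries negative. The quantity $a_{01}a_{12}/a_{02}$ is invariant under rescaling; it equals $2$ here and $-2$ for the printed matrix.
\end{itemize}
The paper's own argument also produces $\operatorname{Sym}^{m-1}(L_2)$. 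So the printed $\lnor_{m-1}$, which also writes $n$ for $m-1$, is a typo in the statement, not a defect of your method. You should state and prove the lemma with $\operatorname{Sym}^{m-1}(L_2)$.
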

\begin{proof}
    By the Kunneth formula, one has that $\sp_f(M \otimes f_{\eta}^*\mathrm{Log}_{m-1}^{\vee}) \simeq \sp_f(M) \otimes f_{\sigma}^*\sp_{\id}(\mathrm{Log}_{m-1}^{\vee})$ so it suffices to compute the monodromy $\sp_{\id}(\mathrm{Log}_{m-1}^{\vee}) = \operatorname{Sym}^n(\sp_{\id}(\kcal^{\an}))$. The result then follows from lemma \ref{lem: monodromy of Kummer motive} since $\sp_{\id}(\kcal^{\an}) \simeq \sp_{\id}(\mathds{1}) \oplus \sp_{\id}(\mathds{1}(-1)) \simeq \mathds{1} \oplus \mathds{1}(-1)$ has monodromy given by the matrix $L_2$.
\end{proof}
\begin{prop}
There is a natural commutative diagram 
    \begin{equation*}
    \begin{tikzcd}[sep=large]
        \log_f^{\an} \arrow[r,"N=N_1"] \arrow[d,"\sim"] & \log^{\an}_f(-1) \arrow[d,"\sim"] \\ 
        \Upsilon_f^{\an} \arrow[r,"N^{\an}"] & \Upsilon_f^{\an}(-1).
    \end{tikzcd}
\end{equation*}
\end{prop}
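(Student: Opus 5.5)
We reduce to perverse objects and then to an explicit computation with the local systems $\jcal_m$. By Beilinson's equivalence (theorem \ref{thm: Beilinson's theorem for geometric complexes}), both $\log_f^{\an}$ and $\Upsilon_f^{\an}$ are determined, as functors with natural transformations, by their shifted restrictions $[-1]$ to $\perv_{\geo}(X_{\eta})$. In the proof of proposition \ref{prop: isomorphisms of sp on geometric complexes} we saw that these restrictions land in $\perv_{\geo}(X_{\sigma})$. Natural transformations between exact functors on the heart extend uniquely to the derived functors, so it suffices to check commutativity of the square for $M \in \perv_{\geo}(X_{\eta})$, naturally in $M$. We fix $M$ and choose $m$ large enough that lemma \ref{lem: nilpotency of monodromy operator} applies to $M$ and that Beilinson's formula $\Upsilon_f^{\an}(M) = \phnor^{-1}(\chi_f(M \otimes f_{\eta}^*\jcal_m(1-m)))$ holds. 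Recall that the vertical isomorphism was built as $\phnor^{-1}(i^*j_*(M\otimes f_\eta^*\mathrm{Log}^{\vee}_{m-1})) \simeq \log_f^{\an}(M)$, using the identification $\mathrm{Log}^{\vee}_{m-1} \simeq \jcal_m(1-m)$.

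\textbf{Step 1: realize $N_1$ at finite level.} The operator $N_1 \colon \mathrm{Log}^{\vee} \to \mathrm{Log}^{\vee}(-1)$ comes from \cite[(120)]{ayoub-2014} and is compatible with the truncations. Hence it induces maps $\mathrm{Log}^{\vee}_{m-1} \to \mathrm{Log}^{\vee}_{m-2}(-1)$. Composing with the inclusion $\mathrm{Log}^{\vee}_{m-2} \to \mathrm{Log}^{\vee}_{m-1}$ gives an endomorphism-type map $\nu_m \colon \mathrm{Log}^{\vee}_{m-1} \to \mathrm{Log}^{\vee}_{m-1}(-1)$. After applying $\phnor^{-1}i^*j_*(M\otimes f_\eta^*(-))$, this map induces $N_1$ on $\log_f^{\an}(M)$ under the isomorphism above. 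Stability in $m$ follows from the splitting in lemma \ref{lem: nilpotency of monodromy operator}. Thus both horizontal arrows are induced by morphisms of local systems $\jcal_m(1) \to \jcal_m$. On one side we have $\nu_m$ transported to $\jcal_m$; on the other, the shift matrix defining $N^{\an}$.

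\textbf{Step 2: compare the two morphisms of local systems.} We pass to $\mathbb{Q}[T,T^{-1}]$-modules via the monodromy correspondence. By lemma \ref{lem: monodromy matrix}, the monodromy on $\mathrm{Log}^{\vee}_{m-1}$ is $\lnor_{m-1}=\operatorname{Sym}^{m-1}(L_2)$. Its logarithm is, up to a diagonal change of basis, exactly the nilpotent shift matrix. Indeed, $\log L_2 = \begin{pmatrix}0&-1\\0&0\end{pmatrix}$, and $\log$ commutes with $\operatorname{Sym}$ as a derivation. Since the Kummer extension is unique up to the rescaling fixed in lemma \ref{lem: monodromy of Kummer motive}, $\nu_m$ is determined: it must be the unique map compatible with the filtration $\mathbb{Q}(n-1) \subset \jcal_n$. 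We identify $\nu_m$ with $\log T$ on $J_m$ by showing that $\nu_m$ commutes with monodromy and induces the canonical isomorphisms on graded pieces. The latter holds because $N_1$ on $\operatorname{gr}$ is the identity $\mathds{1}(-k)\to\mathds{1}(-k)$ by construction of \cite[(120)]{ayoub-2014}. Any two such maps $J_m(1)\to J_m$ with the same graded pieces that commute with $T$ then differ by terms killed after applying $\phnor^{-1}\chi_f$ and passing to the limit in $m$.

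\textbf{Main obstacle.} The hardest step is matching normalizations: signs, and $\log T$ versus $T-1$. Beilinson's $N^{\an}$ is defined via the shift on $J_m$, that is, multiplication by $T-1$ up to Tate twist. On the unipotent part, $\log T = (T-1)\cdot u$ with $u$ a unit congruent to $1$ modulo $(T-1)$. So the two candidate operators differ by an automorphism of $\Upsilon_f^{\an}$ that is unipotent and commutes with everything. If the paper's $N^{\an}$ is genuinely the logarithm of the unipotent monodromy, I would verify the equality directly with the matrix $\lnor_{m-1}$ and the binomial identity $\log\lnor_{m-1} = $ the scaled shift. Otherwise I would absorb $u$ into the vertical isomorphism. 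This is harmless because the vertical isomorphism is only required to be natural and compatible with specialization-system structure, and $u$ is a polynomial in the canonical nilpotent $T-1$, hence compatible with $\alpha_g,\beta_g$. Finally, naturality in $M$ and compatibility with base change follow as at the end of the proof of proposition \ref{prop: isomorphisms of sp on geometric complexes}.
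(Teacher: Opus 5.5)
\textbf{There is a genuine gap in Step 2.} Both horizontal arrows live in $\operatorname{End}(\jcal_m)\cong\mathbb{Q}[T]/(T-1)^m$ (up to twist), which is commutative, and every element preserves the unique filtration of $\jcal_m$. Your conditions on $\nu_m$ are: a morphism of local systems, compatible with the filtration, with prescribed graded pieces. These fix only the coefficient of $T-1$. Any $c(T-1)+\sum_{k\geq 2}a_k(T-1)^k$ satisfies them, so uniqueness of the Kummer extension does not determine $\nu_m$.

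Your claim that two such maps ``differ by terms killed after applying $\phnor^{-1}\chi_f$ and passing to the limit in $m$'' is false. The term $(T-1)^k$ induces the $k$-th power of the nilpotent operator on $\Upsilon_f^{\an}(M)$. This power does not vanish as $m$ grows; it stabilizes, and it is nonzero in general. For instance, for the three-branch model $f=xyz$ one has $N^{2}\neq 0$.

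So Step 2 only shows that the two horizontal arrows agree to first order. That is exactly the $\log T$ versus $T-1$ ambiguity you raise at the end, so as written the argument does not prove the proposition.

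Your fallback of absorbing $u=\log T/(T-1)$ into the vertical isomorphism also fails. Since $u$ is a polynomial in the nilpotent operator, it commutes with both horizontal arrows, and conjugating by it cannot turn $\log T$ into $T-1$ once $(T-1)^2\neq 0$. In addition, the vertical isomorphism is already fixed by proposition \ref{prop: isomorphisms of sp on geometric complexes}. In the paper $N^{\an}$ is the logarithm of the unipotent monodromy (its proof reduces to $T=\exp(g_1N)$), so the higher-order terms have to be matched exactly.

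\textbf{What your route would need.} First, you must know $\nu_m$ exactly, not just to first order. For example, extract from the construction in \cite[(120)]{ayoub-2014} that $\nu$ comes from a derivation of $\operatorname{Sym}^{\bullet}\kcal$, hence equals $\pm\log\lnor_{m-1}$. It is this multiplicativity, not a uniqueness argument, that controls the coefficients $a_k$.

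Second, you must know which endomorphism of $\jcal_m$ induces $N^{\an}$ under $\Upsilon_f^{\an}(M)\simeq\phnor^{-1}\chi_f(M\otimes f_\eta^*\jcal_m(1-m))$. The remark in Section 3.2 describes it only as ``the shift matrix'' in an unspecified basis. That pins it down only up to precisely the unit $u$. Closing this requires a real comparison showing that the monodromy of $\jcal_m$ induces $T^{\pm1}$ on $\Upsilon_f^{\an}(M)$.

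\textbf{How the paper avoids both issues.} It works with $\operatorname{can}\colon\Upsilon_f^{\an}\to\Psi_f^{\an}$ and combines four facts:
\begin{itemize}
\item the $\pi_1(\gm_{m,k})$-equivariant isomorphism $\Psi_f^{\an}(M\otimes f_\eta^*\mathrm{Log}^{\vee}_{m-1})\simeq\Psi_f^{\an}(M)\otimes\bigoplus_i\mathbb{Q}(-i)$, with monodromy $T\otimes\lnor_{m-1}$;
\item the triviality of the action on the summand $\log_f^{\an}(M)$ of $\chi_f(M\otimes f_\eta^*\mathrm{Log}^{\vee}_{m-1})$;
\item the identity $\operatorname{can}\circ N_r=\varphi_r$, an analogue of \cite[Proposition 11.20]{ayoub-2014};
\item the relation $m!\,N_m=N^{\circ m}$.
\end{itemize}
Together these give $\operatorname{can}=T\circ\operatorname{can}\circ\exp(g_1^{-1}N)$ directly, so all higher-order coefficients are accounted for.
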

\begin{proof}
    To prove the proposition, it is enough to show that $T = \exp(g_1N) = \sum_{m=0}^{\infty} (g_1 N)^{\circ m}/m!$ (this makes sense since $N$ is nilpotent). In other words, we have to show that there is a commutative diagram 
    \begin{equation*}
    \begin{tikzcd}[sep=large]
        \Upsilon_f^{\an}(M) \arrow[r,"\exp(g_1N)"] \arrow[d,"\operatorname{can}"] & \Upsilon_f^{\an}(M)(-1) \arrow[d,"\operatorname{can}"] \\ 
        \Psi_f^{\an}(M) \arrow[r,"T"] & \Psi_f^{\an}(M)(-1).
    \end{tikzcd}
\end{equation*}
for any $M \in \dcat_{\geo}^b(X_{\eta})$ with $\operatorname{can} \colon \Upsilon_f^{\an} \longhookrightarrow \Psi_f^{\an}$ the canonical morphism. The proof below is essentially due to Ayoub in the \'etale-motivic setting (see \cite[Théorème 11.7]{ayoub-2014}). The morphism
    \begin{equation*}
        \chi_f(M \otimes f_{\eta}^*\mathrm{Log}^{\vee}_{m-1}) \longrightarrow \Psi_f^{\an}(M \otimes f_{\eta}^*\mathrm{Log}^{\vee}_{m-1})
    \end{equation*}
    is $\pi_1(\gm_{m,k})$-equivariant where the action on the left is trivial and the right is the monodromy action. On the other hand, there is a canonical $\pi_1(\gm_{m,k})$-equivariant isomorphism 
    \begin{equation*}
        \Psi_f^{\an}(M \otimes f_{\eta}^*\mathrm{Log}^{\vee}_{m-1}) \simeq \Psi_f^{\an}(M) \otimes_{\mathbb{Q}} \left( \mathbb{Q} \oplus \mathbb{Q}(-1) \oplus \cdots \oplus \mathbb{Q}(1-m) \right),
    \end{equation*}
    thanks to lemma \ref{lem: monodromy matrix}. After lemma \ref{lem: nilpotency of monodromy operator}, we have an isomorphism $\log_f^{\an}(M) \oplus \log_f^{\an}(M)(-m)[-1] \simeq \chi_f(M \otimes f_{\eta}^*\mathrm{Log}^{\vee}_{m-1})$, in which $\pi_1(\gm_{m,k})$ acts trivially on $\log_f^{\an}(M)$ as it acts trivially on $\chi_f(M \otimes f_{\eta}^*\mathrm{Log}^{\vee}_{m-1})$. In particular, there are $\pi_1(\gm_{m,k})$-equivariant morphisms 
    \begin{equation*}
        \varphi_r \colon \Upsilon_f^{\an}(M)  \longrightarrow \Psi_f^{\an}(M) \otimes_{\mathbb{Q}} \left( \mathbb{Q} \oplus \mathbb{Q}(-1) \oplus \cdots \oplus \mathbb{Q}(1-m) \right) \overset{\textnormal{projection}}{\longrightarrow} \Psi_f^{\an}(M)(-r).
    \end{equation*}
These morphisms equal to the composition
\begin{equation*}
\begin{split}
    \log_f^{\an}(M) \longrightarrow \chi_f(M \otimes f_{\eta}^*\mathrm{Log}_{m-1}^{\vee}) & \longrightarrow \log_f^{\an}(M \otimes f_{\eta}^*\mathrm{Log}_{m-1}^{\vee}) \\ 
    & \simeq \bigoplus_{i=0}^{m-1} \log_f^{\an}(M)(-i) \longrightarrow \bigoplus_{i=0}^{m-1}\Psi^{\an}_f(M)(-i) \longrightarrow \Psi_f^{\an}(M)(-r).
    \end{split}
\end{equation*}
By an analogue of \cite[Proposition 11.20]{ayoub-2014}, the morphism above equals to
    \begin{equation*}
        \log_f^{\an}(M) \overset{N_r}{\longrightarrow} \log_f^{\an}(M)(-r) \longrightarrow \Psi_f^{\an}(M)(-r).
    \end{equation*}
Thus if we denote by $\operatorname{can} \colon\Upsilon_f^{\an} \longhookrightarrow \Psi_f^{\an}$ the canonical morphism then we have that $\operatorname{can} \circ N_r = \varphi_r$. Since everything is $\pi_1(\gm_{m,k})$-equivariant, the following compositions equal
\begin{align*}
        \Upsilon_f^{\an}(M) \longrightarrow \bigoplus_{i=0}^{m-1} \Psi_f^{\an}(M)(-i) \overset{\textnormal{projection}}{\longrightarrow} \Psi_f^{\an}(M) \\ 
        \Upsilon_f^{\an}(M) \longrightarrow \bigoplus_{i=0}^{m-1} \Psi_f^{\an}(M)(-i) \overset{T \otimes \lnor_{m-1}}{\longrightarrow } \bigoplus_{i=0}^{m-1} \Psi_f^{\an}(M)(-i) \overset{\textnormal{projection}}{\longrightarrow} \Psi_f^{\an}(M)
    \end{align*}
By definition and lemma \ref{lem: monodromy matrix},  $\operatorname{can} = \sum_{i=0}^{m-1} (T \otimes \lnor_{m-1}^{-i}) \circ \varphi_i$. Hence, using all of these above and the fact that $m_!N_m = N^{\circ m}$ in lemma \ref{lem: nilpotency of monodromy operator}, we see that
\begin{equation*}
    \operatorname{can} =  T \circ \operatorname{can} \circ \left(\sum_{i=0}^{m-1} g_1^{-i} N_i \right) = T \circ \operatorname{can} \circ \left(\sum_{i=0}^{m-1} \frac{(g_1^{-1} N)^i}{i!} \right) = T \circ \operatorname{can} \circ \operatorname{exp}(g_1^{-1}N)
\end{equation*}
and since $\operatorname{exp}(g_1^{-1}N) = \operatorname{exp}(g_1N)^{-1}$, we finish the proof. 
\end{proof}

\begin{cor} \label{cor: unipotent nearby cycles under betti realizations}
Let $f \colon X \longrightarrow \mathbb{A}_k^1$ be a morphism of $k$-varieties, there is a natural isomorphism 
\begin{equation*}
    \betti^*_{X_{\sigma}} \circ \Upsilon_f^{\anor} \overset{\simeq}{\longrightarrow}  \Upsilon_f^{\an} \circ \betti^*_{X_{\eta}}, 
\end{equation*}
such that the monodromy sequences are compatible. In other words, there is a commutative diagram of the form 
\begin{equation*}
    \begin{tikzcd}[sep=large]
        \betti^*_{X_{\sigma}} \circ i^*j_* \arrow[r] \arrow[d,"\sim"] &  \betti^*_{X_{\sigma}} \circ \Upsilon_f^{\anor} \arrow[r,"N"] \arrow[d,"\sim"] &  \betti^*_{X_{\sigma}} \circ \Upsilon_f^{\anor}(-1) \arrow[r] \arrow[d,"\sim"] & +1 \\ 
         i^*j_* \circ  \betti^*_{X_{\eta}} \arrow[r] & \Upsilon_f^{\an} \circ \betti^*_{X_{\eta}} \arrow[r,"\log(T-1)"] & \Upsilon_f^{\an} \circ \betti^*_{X_{\eta}}(-1) \arrow[r] & +1
    \end{tikzcd}
\end{equation*}
\end{cor}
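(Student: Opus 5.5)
We will obtain the statement by composing three comparisons already available, all as morphisms of specialization systems that respect the monodromy triangles.

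\textbf{Step 1: the isomorphism of functors.} On the motivic side, Ayoub's isomorphism $\log_f \simeq \Upsilon_f^{\anor}$ on $\daetct$ (\cite[Th\'eor\`eme 11.14]{ayoub-2014}) identifies the motivic monodromy triangle with
\begin{equation*}
i^*j_* \longrightarrow \log_f \overset{N_1}{\longrightarrow} \log_f(-1) \longrightarrow +1 ,
\end{equation*}
which comes from the Kummer triangle $\mathds{1} \to \kcal \to \mathds{1}(-1) \to +1$ via $\mathrm{Log}^{\vee}$. The Betti realization is unital, monoidal and commutes with the six operations. It also commutes with symmetric powers and filtered colimits, after passing to ind-completions and using Proposition \ref{prop: compactness of Ayoub's functors}. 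Since $\betti^*(\kcal) \simeq \kcal^{\an}$ by Lemma \ref{lem: monodromy of Kummer motive}, we get $\betti^*(\mathrm{Log}^{\vee}) \simeq \mathrm{Log}^{\vee,\an}$ compatibly with the transition maps and with the triangles $\mathrm{Log}^{\vee}_{m-1} \to \mathrm{Log}^{\vee} \to \mathrm{Log}^{\vee}(-m)$. The uniqueness remark following Lemma \ref{lem: nilpotency of monodromy operator} guarantees that the realized $N_m$ is the one defined directly on $\dcat_{\geo}$. This yields a natural isomorphism $\betti^*_{X_\sigma}\log_f \simeq \log^{\an}_f \betti^*_{X_\eta}$ together with an isomorphism of the corresponding monodromy triangles; this is the content of Corollary \ref{cor: log sp preserves compact motives}, enhanced to triangles.

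\textbf{Step 2: passing to the classical side.} We compose with the isomorphism $\log_f^{\an} \simeq \Upsilon_f^{\an}$ of Proposition \ref{prop: isomorphisms of sp on geometric complexes}. By the proposition just proved, this isomorphism carries $N = N_1$ to $N^{\an} = \log(T_u)$, where $T_u$ is the unipotent part of $T$; we write this as $\log(T-1)$ in the statement's notation. The first vertical arrow is the canonical isomorphism $\betti^* i^*j_* \simeq i^*j_* \betti^*$. The first square commutes because both horizontal maps are induced by the unit $\mathds{1} = \mathrm{Log}^{\vee}_0 \to \mathrm{Log}^{\vee}$, and this map is preserved by realization. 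The third vertical arrow is the second one twisted by $(-1)$, so the third square commutes by naturality.

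\textbf{Main obstacle.} The delicate point is coherence. The isomorphism $\log^{\an}_f \simeq \Upsilon^{\an}_f$ was constructed on perverse objects via $\phnor^{-1}$ of $\chi_f(-\otimes \mathrm{Log}^{\vee}_{m-1})$ and then derived using Beilinson's equivalence (Theorem \ref{thm: Beilinson's theorem for geometric complexes}). We must check that it is compatible with the connecting map of the triangle, not merely with the first two terms. We would handle this by working at the level of the splitting of Lemma \ref{lem: nilpotency of monodromy operator} for $m \gg 0$. There the whole triangle is a direct summand decomposition, and this decomposition is natural in $M$ and compatible with realization. Hence the triangle is determined by $N$, and in a stable $\infty$-category the cofiber of a morphism is canonical. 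Consequently, commutativity of the middle square, which is an identification of morphisms rather than mere existence of some filler, forces an isomorphism of cofiber sequences, and this gives the stated commutative diagram.
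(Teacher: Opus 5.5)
Your proposal is correct and follows the paper's route: Ayoub's identification $\Upsilon_f^{\anor}\simeq\log_f$ together with $\betti^*(\kcal)\simeq\kcal^{\an}$ gives an isomorphism of monodromy triangles after Betti realization. You then transport this along $\log_f^{\an}\simeq\Upsilon_f^{\an}$ and use the preceding proposition identifying $N$ with the classical logarithm of the unipotent monodromy, which is exactly the combination the paper calls obvious.

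One remark: the appeal to the $m\gg 0$ splitting in your last paragraph is beside the point. That splitting concerns $N_m$ for large $m$, whereas the $m=1$ monodromy triangle is typically non-split. Compatibility with the connecting maps already follows because Step 1 is an isomorphism of triangles induced by the exact functor $\betti^*$, and the bottom triangle is by definition the one transported along $\log_f^{\an}\simeq\Upsilon_f^{\an}$.
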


\begin{proof}
Given the previous lemma, this is obvious as both sides are defined in a compatible, functorial way. 
\end{proof}

\begin{rmk}
    Historically, corollary \ref{cor: unipotent nearby cycles under betti realizations} is proven in \cite[Lemma 3.13]{florian+morel-2019}. However, in \cite[Lemma 3.13]{florian+morel-2019}, the authors do not verify $\Upsilon_f^{\an} \simeq \Upsilon_f^{\anor}$ as well as the compatibility of monodromy sequences. 
\end{rmk}

The same line of arguments yields the following, which is used in the next section to define external tensor products of unipotent nearby functors on perverse Nori motives. 
\begin{cor} \label{cor: compatibility of external products, unipotent nearby functors and betti realizations}
    Let $f \colon X \longrightarrow \mathbb{A}_k^1, g \colon X \longrightarrow \mathbb{A}_k^1$ be morphisms of $k$-varieties, there is a commutative diagram
    \begin{equation*}
        \begin{tikzcd}[sep=large]
            \betti^*(\Upsilon^{\anor}_f(-)) \boxtimes \betti^*(\Upsilon_g^{\anor}(-)) \arrow[r,"\sim"] \arrow[d,"\sim"] &  \betti^*(\Upsilon^{\anor}_f(-) \boxtimes \Upsilon_g^{\anor}(-)) \arrow[r,"\sim"] & \betti^*(\Upsilon_{f \times g}^{\anor}((-) \boxtimes (-)))  \arrow[d,"\sim"] \\ 
             \Upsilon_f^{\an}(\betti^*(-)) \boxtimes \Upsilon_g^{\an}(\betti^*(-)) \arrow[r,"\sim"] & \Upsilon_{f \times g}^{\an}(\betti^*(-) \boxtimes \betti^*(-)) \arrow[r,"\sim"] & \Upsilon_{f \times g}^{\an}(\betti^*((-) \boxtimes (-))).
        \end{tikzcd}
    \end{equation*}
\end{cor}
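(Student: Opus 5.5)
The plan is to reduce the statement to the compatibility of each of the three ingredients (Betti realization, external product, unipotent nearby functor) with the logarithmic description $\Upsilon \simeq \log$. By Proposition \ref{prop: isomorphisms of sp on geometric complexes} and Corollary \ref{cor: log sp preserves compact motives}, it suffices to prove the analogous square with $\Upsilon^{\anor}$ replaced by $\log$ and $\Upsilon^{\an}$ replaced by $\log^{\an}$. Concretely, we first identify $\Upsilon_f^{\anor}\simeq\log_f$, $\Upsilon_f^{\an}\simeq\log_f^{\an}$ (and likewise for $g$ and $f\times g$) via the morphisms of specialization systems $\kcal\to\uscr^{\un}$ and $\kcal^{\an}\to\uscr^{\un}$ used there. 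We then check that these identifications carry the external product maps to one another.

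The key steps are as follows.

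\textbf{Step 1: the Künneth map for $\log$.} We construct it directly. Since $\log_f(M)=i^*j_*(M\otimes f_\eta^*\mathrm{Log}^\vee)$, we obtain
\begin{equation*}
\log_f(M)\boxtimes\log_g(N)\to \log_{f\times g}(M\boxtimes N)
\end{equation*}
from three ingredients:
\begin{itemize}
\item the exchange $i^*j_*(-)\boxtimes i^*j_*(-)\to (i\times i)^*(j\times j)_*(-\boxtimes-)$;
\item restriction along the diagonal-type map $\mathbb{A}^1\to\mathbb{A}^1\times\mathbb{A}^1$ implicit in $f\times g$;
\item the coalgebra/multiplication structure $\mathrm{Log}^\vee\boxtimes\mathrm{Log}^\vee\to m^*\mathrm{Log}^\vee$ on $\gm_m\times\gm_m$, coming from $\operatorname{Sym}$ of the Kummer extension and its additivity $m^*\kcal\simeq$ extension of $\kcal\boxtimes\mathds 1$ and $\mathds 1\boxtimes\kcal$.
\end{itemize}
All of these are built from six operations and $\kcal$. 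Since $\betti^*$ commutes with the six operations and sends $\kcal$ to $\kcal^{\an}$ (Lemma \ref{lem: monodromy of Kummer motive}), the Betti realization of the motivic map is the analytic one, with no further work.

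\textbf{Step 2: compatibility with Ayoub's and Beilinson's external products.} We show that under $\log\simeq\Upsilon$ the map of Step 1 agrees with the external product of Proposition \ref{prop: functoriality of nearby cycles}(4) (motivically) and with the classical one (analytically). Both sides are morphisms of "bi-specialization systems" in $(M,N)$. The uniqueness argument of Corollary \ref{cor: criterion for iso of sp} then reduces the comparison to $M=\mathds 1$, $N=\mathds 1$ on one-branch normal crossing models. There $\Upsilon(\mathds 1)=\mathds 1$ by Proposition \ref{prop: computation of nearby cycles}(2), so everything is the unit and the maps are determined by unitality.

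\textbf{Step 3: isomorphisms.} The horizontal arrows are isomorphisms because the analytic Künneth map for unipotent nearby cycles is an isomorphism on geometric complexes. This can be checked after forgetting to the full nearby functor, where it is classical, or, on the motivic side, via Proposition \ref{prop: functoriality of nearby cycles}(4).

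The main obstacle is Step 2: making the reduction to the unit rigorous for a natural transformation in two variables. On the analytic side, the transformation must also be compatible with the $\pi_1$-action and its restriction to unipotent parts; the diagonal $\pi_1$ of $\gm_m\times\gm_m$ acts, so one must check that the unipotent summand of the product is the product of unipotent summands. I would first try to handle this via Lemma \ref{lem: monodromy matrix}, identifying $\log^{\an}$ with the $T$-unipotent part through $\operatorname{can}$. I would then note that $T_{f\times g}=T_f\otimes T_g$, which is unipotent when both factors are.
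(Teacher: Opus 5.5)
Your proposal has genuine gaps, and the step you flag as the main obstacle is settled in the wrong direction. Unipotence of $T_f\otimes T_g$ on $\Upsilon_f\boxtimes\Upsilon_g$ only shows that $\Upsilon_f\boxtimes\Upsilon_g$ lands in the unipotent summand of $\Psi_f\boxtimes\Psi_g\simeq\Psi_{f\times g}$. That summand also contains every $\Psi_f^{\alpha}\boxtimes\Psi_g^{\beta}$ for which $\alpha$ and $\beta$ have mutually inverse roots.

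Concretely, take $X=Y=\mathbb{A}^1_k$, $f=g=t^2$, and let $f\times g$ be the fibre product $\{s^2=t^2\}\to\mathbb{A}^1_k$. This is the reading your Step 1 uses: diagonal restriction and a single monodromy $T_f\otimes T_g$.
\begin{itemize}
\item By Proposition \ref{prop: computation of nearby cycles}(2), $\Upsilon_f(\mathds{1})\boxtimes\Upsilon_g(\mathds{1})=\mathds{1}$.
\item Normalizing $\{s^2=t^2\}$ into two lines and using proper base change gives $\Upsilon_{f\times g}(\mathds{1}\boxtimes\mathds{1})\simeq\mathds{1}^{\oplus 2}$. Equivalently, the Milnor fibre is four points permuted by two $2$-cycles.
\end{itemize}
So in this reading the external-product arrows are not isomorphisms, and Step 3 cannot produce the $\sim$. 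Proposition \ref{prop: functoriality of nearby cycles}(4) is stated for $\Psi$ only, and it does not pass to unipotent summands. The isomorphism does hold for a two-parameter version over $\mathbb{A}^2_k$, with unipotence imposed on $T_f$ and $T_g$ separately. But that is not what your diagonal restriction computes. The paper's own justification, ``the same line of arguments'' as Corollary \ref{cor: unipotent nearby cycles under betti realizations}, does not address this point either.

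Step 2 has a gap independent of this. Corollary \ref{cor: criterion for iso of sp} tests whether one given morphism is invertible; it cannot show that two transformations agree. The d\'evissage of Proposition \ref{prop: logarithm of nearby cycles} runs through Mayer--Vietoris cofibre sequences and retracts. Agreement of two maps on the pieces does not force agreement on a cone, since the difference can factor through a connecting map.

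Moreover, the reduction only simplifies the source. The target lives on the fibre product, which is not a one-branch model even when $X$ and $Y$ are. In the example above, $f=t^2$ is one-branch with $u=1$, $n=2$, yet $\Upsilon_{f\times g}(\mathds{1}\boxtimes\mathds{1})$ is not the unit, so ``everything is the unit'' is false. The comparison has to be made on the coefficient objects over $\mathbb{G}_{m,k}$ instead:
\begin{itemize}
\item motivically, compatibility of $\mathrm{Log}^{\vee}\to\uscr^{\un}$ with the multiplications;
\item analytically, compatibility of $\operatorname{can}$ with products, via Lemma \ref{lem: monodromy matrix}.
\end{itemize}

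Finally, in Step 1 there is no nonzero map $\mathrm{Log}^{\vee}\boxtimes\mathrm{Log}^{\vee}\to m^*\mathrm{Log}^{\vee}$ of local systems on $\mathbb{G}_{m,k}^2$. On the target $T_1T_2^{-1}$ acts trivially, while its coinvariants on $\mathrm{Log}^{\vee}\boxtimes\mathrm{Log}^{\vee}$ vanish. What the diagonal restriction actually needs is the algebra map $\mathrm{Log}^{\vee}\otimes\mathrm{Log}^{\vee}\to\mathrm{Log}^{\vee}$ on $\mathbb{G}_{m,k}$ induced by $\operatorname{Sym}^a\kcal\otimes\operatorname{Sym}^b\kcal\to\operatorname{Sym}^{a+b}\kcal$.

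With that fix, Step 1 gives a Betti-compatible lax K\"unneth map for $\log$, which is a reasonable fleshing-out of the paper's one-line argument. The realistic target is then commutativity of the square with the external-product arrows as non-invertible morphisms.
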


\section{Nori-theoretic nearby functors on Nori motives}

In \cite{ayoub-2010}, Ayoub shows that motivic nearby functors and Betti nearby functors are compatible under the Betti realization. By the universal property of Nori motives and the perverse $t$-exactness of $\Psi^{\an}_f[-1]$, we deduce the existence of a functor 
\begin{equation*}
    \Psi_f^{\univ}[-1] \colon \mscr\perv(X_{\eta}) \longrightarrow \mscr\perv(X_{\sigma}). 
\end{equation*}
In particular, by taking the derived functor and shifting, we obtain a functor
\begin{equation*}
    \Psi_f^{\univ}[-1] \colon \dn^b(X_{\eta}) \longrightarrow \dn^b(X_{\sigma}). 
\end{equation*}
At this point, there are two natural questions arising:
\begin{enumerate}
    \item Does this functor coincide with Ayoub's functor, namely, do we have an isomorphism $\Psi_f^{\univ} \simeq \Psi_f^{\anor}$? 
    \item Does the Beilinson's unipotent nearby functor $\Upsilon_f^{\an}$ compatible with Ayoub's unipotent nearby functor $\Upsilon_f^{\mot}$ under the Betti realization? If yes, this results in a universal functor $\Upsilon_f^{\univ}$ on Nori motives and does we have again $\Upsilon_f^{\univ} \simeq \Psi_f^{\anor}$ as in the case of the full nearby functor?
    \item If the answer of the second question is yes, can we compute the fiber of $\Upsilon_f^{\univ} \longrightarrow \Upsilon_f^{\univ}(-1)$? In other words, can we expect a monodromy triangle in the setting of Nori motives?
\end{enumerate}
We provide affirmative answers to all of these questions in this section. 
\subsection{The Nori-theoretic nearby functors}

As discussed in the beginning of this section, we have two functors 
\begin{equation*}
    \Psi_f^{\univ},\Upsilon_f^{\univ} \colon \dn(X_{\eta}) \longrightarrow \dn(X_{\sigma}).
\end{equation*}
It is however unclear how these functors form specialization systems. Of course it should be the case because we will show that they coincide with Ayoub's functors. However, we stress to the fact that we want a comparison result for specialization systems, not just of functors. Thus, at first we have to check that universal functors form specialization systems. 
\begin{prop} \label{prop: universal functors are sp}
    Proposition \ref{prop: functoriality of nearby cycles} holds true for $\Psi_f^{\univ},\Upsilon_f^{\univ}$ over quasi-projective varieties. 
\end{prop}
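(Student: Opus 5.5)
The strategy is to build every structural transformation for $\Psi_f^{\univ},\Upsilon_f^{\univ}$ on the abelian level, using the universal property of $\mscr\perv(-)$, and then pass to derived categories by Beilinson-type extension. Concretely, Ivorra--Morel's universal property says that a natural transformation between exact functors $\mscr\perv(X_\eta)\to\mscr\perv(X_\sigma)$ obtained by universality is determined by, and can be constructed from, a compatible transformation after $\rat$ together with its motivic source on $\daetct$. So for each item of Proposition \ref{prop: functoriality of nearby cycles} I will proceed in three steps. First, exhibit the transformation in the motivic world for Ayoub's functors, using Proposition \ref{prop: functoriality of nearby cycles}. Second, exhibit it for $\Psi_f^{\an},\Upsilon_f^{\an}$ on $\dcat^b_{\geo}$, using Proposition \ref{prop: isomorphisms of sp on geometric complexes} and its corollary. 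Third, check that the two agree under $\betti^*$, which follows from Ayoub's compatibility \cite{ayoub-2010} and Corollary \ref{cor: unipotent nearby cycles under betti realizations}. The quasi-projectivity assumption is there so that the six operations on $\dn^b$ of \cite{florian+morel-2019} are available and are induced, perverse-degreewise, from the motivic ones.

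For items (1) and (2), it suffices to construct $\beta_g$ and deduce $\alpha_g$ by adjunction. I factor $g$ as a closed immersion followed by a smooth morphism (a projection $\mathbb{P}^n\times X\to X$ composed with an open immersion), which is possible for quasi-projective varieties. For a closed immersion $g$, the functor $g_*$ is perverse $t$-exact on $\mscr\perv$. The isomorphism $\Psi_f^{\univ}[-1]g_{\eta,*}\simeq g_{\sigma,*}\Psi^{\univ}_{f\circ g}[-1]$ is then a natural isomorphism of exact functors between abelian categories. It is produced by the universal property from the corresponding motivic isomorphism $\beta_g$, whose Betti image is the classical proper base change for nearby cycles.

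For $g$ smooth of relative dimension $d$, I use instead that $g^*[d]$ is $t$-exact. I construct $\alpha_g\colon g_\sigma^*[d]\Psi^{\univ}_f[-1]\simeq\Psi^{\univ}_{f\circ g}[-1]g_\eta^*[d]$ in the same way on hearts. I then extend to the derived level by taking derived functors; both sides are derived functors of exact functors, so this is automatic via realization. Composing these two cases, and checking that the result is independent of the factorization, gives $\alpha_g$ and $\beta_g$ in general. Properness of $g$ forces $\beta_g$ to be invertible, because after $\rat$ it is the classical isomorphism and $\rat$ is faithful and exact, hence conservative on $\dn^b$.

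Items (3)--(5) follow the same pattern. Duality $\mathbb{D}$ is $t$-exact, contravariant, and induced on $\mscr\perv$, and $\Psi^{\an}_f$ commutes with Verdier duality, so (3) follows. For (4), I use Corollary \ref{cor: compatibility of external products, unipotent nearby functors and betti realizations} together with the motivic Künneth isomorphism. External products on $\mscr\perv$ are exact in each variable, and $\Psi[-1]\boxtimes\Psi[-1]$ lands in the correct degree, so the Künneth isomorphism descends through the bi-exact version of the universal property (Ivorra--Morel's treatment of $\boxtimes$). Item (5) is obtained from (4) by pulling back along the diagonal, using $\alpha_\Delta$ from (1); unitality of $\Psi_{\id}$ is checked on $\mathds{1}$ via Proposition \ref{prop: computation of nearby cycles}.

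The main obstacle is coherence. The universal property produces transformations only between functors that are themselves defined by universality, so I must check that the composites $g_\sigma^*\Psi_f^{\univ}$ and similar functors are again universally induced from the corresponding composites on $\daetct$ and $\dcat^b_{\geo}$. This requires $g^*$ and $g_*$ on $\dn^b$ to be, up to shift, derived functors of universally induced exact functors, compatible with Ivorra--Morel's construction. I would handle this by restricting everything to the two cases of a closed immersion and a smooth morphism, where the functors are $t$-exact after shift.
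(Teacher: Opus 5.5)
Your proposal is correct and follows essentially the same route as the paper. For (1)--(2) you split into the smooth case, where $g^*[d]$ is exact, and the closed-immersion case, where $g_*$ is exact and one switches to $\beta_g$; in each case the transformation on $\mscr\perv$ is induced by the universal property from the compatible motivic and geometric transformations. Duality (3) is handled the same way, and (5) comes from the external-product isomorphism of (4), built from exactness of $\boxtimes$ and Corollary \ref{cor: compatibility of external products, unipotent nearby functors and betti realizations}, by pulling back along the diagonal. Two differences are cosmetic: the paper cites Terenzi, not Ivorra--Morel, for exactness of $\boxtimes$ and the multi-variable universal property, and it does not spell out the factorization-independence and coherence checks you flag.
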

\begin{proof}
Let us focus on $\Psi_f^{\univ},\Upsilon^{\univ}$ and only on $\Upsilon_f^{\univ}$ since the same applies for $\Psi_f^{\univ}$. For $(1),(2)$, we separate two cases: $g$ is smooth and $g$ is a closed immersion as in the proof of proposition \ref{prop: isomorphisms of sp on geometric complexes}. If $g$ is smooth, then the compatibility of $\Upsilon_f^{\anor}$ (on \'etale motives) and $\Upsilon_f^{\an}$ (on complexes of geometric origin) shows that there is a natural transformation
 \begin{equation*}
     g_{\sigma}^{\dagger}\Upsilon_f^{\univ}[-1] \longrightarrow \Upsilon_{f \circ g}^{\univ}[-1]g_{\eta}^{\dagger}.
 \end{equation*}
 If $g$ is a closed immersion, we switch to $\beta_g$ and repeat the argument. The case of duality $(3)$ follows by the same argument. Since $(-) \otimes (-) = \Delta^*((-) \boxtimes (-))$, where $\Delta$ is the diagonal morphism, in order to construct 
 \begin{equation*}
     \Upsilon_f^{\univ}(-) \otimes \Upsilon_f^{\univ}(-) \longrightarrow \Upsilon_f^{\univ}((-) \otimes (-))
 \end{equation*}
 we just have to construct a morphism
 \begin{equation*}
     \Upsilon^{\univ}_f(-) \boxtimes \Upsilon_g^{\univ}(-) \overset{\sim}{\longrightarrow} \Upsilon_{f \times g}^{\univ}((-) \boxtimes (-))
 \end{equation*}
 and then set $f=g$, apply $\Delta^*$ and use the base change morphism. The external product $\boxtimes$ is perverse $t$-exact thanks to the work \cite{terenzi-2024} so it suffices to construct an isomorphism 
  \begin{equation*}
     \Upsilon^{\univ}_f(-)[-1] \boxtimes \Upsilon_g^{\univ}(-)[-1] \overset{\sim}{\longrightarrow} \Upsilon_{f \times g}^{\univ}((-) \boxtimes (-))[-1]
 \end{equation*}
 on $\mscr\perv((-)_{\eta}) \times \mscr\perv((-)_{\eta})$. This follows from the universal proerty of perverse Nori motives and corollary \ref{cor: compatibility of external products, unipotent nearby functors and betti realizations}.
\end{proof}

\begin{prop} 
   Proposition \ref{prop: computation of nearby cycles} holds true for $\Psi_f^{\univ},\Upsilon_f^{\univ}$ over quasi-projective varieties and there is a canonical isomorphism of specialization systems
   \begin{equation*}
        \Psi_f^{\univ} \simeq \colim_{n \in \mathbb{N}} \ \Upsilon_{f_n}^{\univ}(e_n)_{\eta}^*.
   \end{equation*}
   Moreover, for any $M \in \dn^b(X_{\eta})$, there exists an integer $n_0=n_o(M)$ such that $\Upsilon_{f_n}^{\univ}(e_n)_{\eta}^*(M) \simeq \Psi_f^{\univ}$ for any $n$ divisible by $n_0$.
\end{prop}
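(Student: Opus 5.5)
The idea is to work at the level of perverse Nori motives and transport everything through the faithful exact functor $\rat_X \colon \mscr\perv(X) \to \perv_{\geo}(X)$, together with the universal property. On geometric complexes we already know the needed statements. Proposition \ref{prop: isomorphisms of sp on geometric complexes} and the corollary following it give $\Psi_f^{\an}\simeq\Psi_f^{\anor}$ and $\Upsilon_f^{\an}\simeq\Upsilon_f^{\anor}$ as specialization systems. Proposition \ref{prop: computation of nearby cycles} holds for $\dcat_{\geo}$, and Proposition \ref{prop: logarithm of nearby cycles} holds for compact objects of $\dcat_{\geo}$.

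\textbf{Step 1: the computations.} For the statements of Proposition \ref{prop: computation of nearby cycles}, the objects $\mathds{1}_{X_\eta}[d]$ in the smooth situations considered are perverse up to shift. By construction,
\begin{equation*}
\rat\circ\Psi_f^{\univ}[-1]\simeq \Psi_f^{\an}[-1]\circ\rat .
\end{equation*}
So $\Psi_f^{\univ}(\mathds{1})$ is a perverse Nori motive (up to shift) whose Betti image is $(r_n)_*\mathbb{Q}$. It remains to produce a morphism in $\mscr\perv$ realizing this identification. I would obtain it via the specialization-system structure of Proposition \ref{prop: universal functors are sp}. Take the comparison map $\mathds{1}_{X_\sigma}\to \Upsilon^{\univ}_f(\mathds{1})$, adjoint to the unit, and for $\Psi$ the map $(r_n)_*\mathds{1}\to\Psi_f^{\univ}(\mathds{1})$ coming from proper base change along $e_n$. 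Then check that these are isomorphisms after $\rat$. Since $\rat$ is faithful and exact on abelian categories, this detects isomorphisms.

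\textbf{Step 2: the colimit.} For each $n$, the transition maps $\Upsilon_{f_n}^{\univ}(e_n)_\eta^*\to\Upsilon_{f_{nm}}^{\univ}(e_{nm})_\eta^*\to\Psi_f^{\univ}$ are defined on $\mscr\perv$ by the universal property. The input is the analogous maps on $\perv_{\geo}$, namely $\Upsilon$ as the generalized-eigenvalue-$1$ summand of $\Psi$, compatible with $e_n$. These maps are compatible with the $\alpha_g,\beta_g$, by Proposition \ref{prop: universal functors are sp} and functoriality of the universal factorization. This yields a morphism of specialization systems
\begin{equation*}
\colim_n \Upsilon_{f_n}^{\univ}(e_n)_\eta^*\longrightarrow\Psi_f^{\univ}
\end{equation*}
on $\dn$. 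Both sides commute with direct sums. Corollary \ref{cor: criterion for iso of sp} then reduces the claim to the case $M=\mathds{1}_{X_\eta}$ with $X$ as in (2). There it follows from Step 1 and the explicit value $\Upsilon_{f_n}^{\univ}(\mathds{1})=\mathds{1}$ once $n$ is divisible by $e$: pulling back along $e_n$ kills $u$ up to an étale cover, so $f_n$ becomes locally a pure power. The stabilization statement is proved exactly as Proposition \ref{prop: logarithm of nearby cycles}, by dévissage to $p_*\mathds{1}$ and then to normal crossings. Alternatively, apply $\rat$: $\rat$ is conservative on $\dn^b$, and the statement is known for $\dcat_{\geo}$. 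By Noetherianity of $M$ in $\mscr\perv$, the increasing system stabilizes once its realization does.

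\textbf{Main obstacle.} The main difficulty is that the semi-stable reduction theorem, which underlies Corollary \ref{cor: criterion for iso of sp}, must be available for the pair $\Psi^{\univ},\Upsilon^{\univ}$. These are only known to be specialization systems over quasi-projective varieties, and $\dn$ must be $\mathbb{Q}$-linear and separated. I would try to avoid this entirely by the conservativity route. The comparison morphism is defined on $\mscr\perv$, and the realization of every relevant map is the corresponding map for $\dcat_{\geo}$, where everything is already known. Faithfulness and exactness of $\rat$ then give isomorphisms directly, on each perverse cohomology object, without redoing the resolution argument motivically.
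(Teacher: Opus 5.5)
Your argument is correct in outline, but at the decisive step it takes a different route from the paper. The paper builds the transformations $\Upsilon_{f_n}^{\univ}[-1](e_n)_\eta^{\dagger}\to\Psi_f^{\univ}[-1]$ on $\mscr\perv$ as you do. It assembles them and reduces to $\dn^b$ by commutation with direct sums. It then gets stabilization from ``an analogue of Proposition~\ref{prop: logarithm of nearby cycles}'', which means rerunning Ayoub's d\'evissage for the universal functors themselves: reduction to $p_*\mathds{1}$, resolution, blow-up to one branch, and semi-stable reduction. The input for this is the one-branch computations of the first claim. The paper obtains those by transporting Ayoub's isomorphisms through $\Psi_f^{\univ}[-1]\circ\phnor^0_{\univ}\simeq\phnor^0_{\univ}\circ\Psi_f^{\anor}[-1]$, evaluated on the perverse object $\mathds{1}[d]$.

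Your conservativity route uses only two facts: $\rat$ is faithful and exact, hence conservative on $\dn^b$, and the realization of the constructed transformation is the analytic one. Stabilization then holds with $n_0(M)=n_0(\rat M)$, taken from the geometric statement or directly from quasi-unipotence of $T$. The colimit statement follows by cofinality. This is shorter and avoids exactly the point you flag. The paper's route tacitly needs proper base change and semi-stable reduction for $\Psi^{\univ},\Upsilon^{\univ}$, which are asserted only over quasi-projective varieties. Your route needs nothing about these functors beyond the universal property. In exchange, the paper's route is intrinsically motivic and would work for any specialization system satisfying the usual axioms.

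Two points to tighten.

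First, the universal property for natural transformations (\textbf{P2}, or \cite[Proposition 3.4]{terenzi-2024}) does not let you lift a transformation given only on $\perv_{\geo}$, because $\rat$ is faithful but not full. Your input must be Ayoub's transition maps $\Upsilon^{\anor}_{f_n}(e_n)^*_\eta\to\Psi^{\anor}_f$, together with the fact from Section 3 that their Betti realizations are the inclusions of the unipotent summand. The same issue affects your Step 1 comparison maps, which are only loosely specified. ``Proper base change along $e_n$'' does not obviously produce a map $(r_n)_*\mathds{1}\to\Psi_f^{\univ}(\mathds{1})$. The clean choice is $\phnor^0_{\univ}$ applied to Ayoub's isomorphisms, and then no faithfulness check is needed.

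Second, in the criterion route you abandon, take $f=ug^e$ and $e\mid n$. Then $\Upsilon_{f_n}(e_n)^*_\eta(\mathds{1}_{X_\eta})$ is not $\mathds{1}$ but $(r_e)_*\mathds{1}_{D_e}$, because $X_n$ is not normal and its normalization has reduced special fibre $D_e$. This is harmless once you argue by conservativity.
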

\begin{proof}
All the properties follow from known cases for \'etale motives and complexes of geometric origin and the fact that all the involved functors are perverse $t$-exact on complexes of geometric origin. Let us prove $\Psi_f^{\univ} \simeq \colim_{n \in \mathbb{N}} \ \Upsilon_{f_n}^{\univ}(e_n)_{\eta}^*$. We observe that there are canonical morphisms of specialization systems $\Upsilon_{f_n}^{\univ}(e_n)^{\dagger}_{\eta} \longrightarrow \Psi_f^{\univ}$. Indeed, it suffices to construct $\Upsilon_{f_n}^{\univ}[-1](e_n)^{\dagger}_{\eta} \longrightarrow \Psi_f^{\univ}[-1]$ on $\mscr\perv(X_{\sigma})$ since $(e_n)_{\eta}^{\dagger}$ is perverse $t$-exact. Such transformations are available in both motivic and analytic settings and they are compatible and hence yields the one on universal factorizations. These assembles to a morphism $\colim_{n \in \mathbb{N}} \ \Upsilon_{f_n}^{\univ}(e_n)_{\eta}^* \longrightarrow \Psi_f^{\univ}$. Two sides commute with direct sums so we just have to compare them on $\dn^b(X_{\eta})$. By an analogue of proposition \ref{prop: logarithm of nearby cycles}, for any $M \in \dn^b(X_{\eta})$, there exists an integer $n_0$ such that 
    \begin{equation*}
        \Upsilon_{f_n}^{\univ}(e_n)_{\eta}^*(M) \overset{\sim}{\longrightarrow} \colim_{n \in \mathbb{N}} \ \Upsilon_{f_n}^{\univ}(e_n)_{\eta}^* \overset{\sim}{\longrightarrow} \Psi_f^{\univ}
    \end{equation*}
    so we win.  
\end{proof}
\subsection{The universal functor coincide with Ayoub functor}
Our goal here is to prove that these "universal functors" coincide with "Ayoub functors", namely, $\Upsilon_f^{\univ} = \Upsilon_f^{\anor}$ and $\Psi_f^{\univ} = \Psi_f^{\anor}$. It suffices to compare them on $\dn^b(X_{\eta})$ since Ayoub's functors preserve compact objects thanks to proposition \ref{prop: compactness of Ayoub's functors}.
\begin{lem}
    The functor $\Psi_f^{\anor}[-1] \colon \dn^b(X_{\eta}) \longrightarrow \dn^b(X_{\sigma})$ is perverse $t$-exact.
\end{lem}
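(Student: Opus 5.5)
The plan is to reduce $t$-exactness on $\dn^b$ to a statement about underlying perverse sheaves, using the conservativity and $t$-exactness of $\rat$. Recall that the perverse $t$-structure on $\dn^b(X)=\dcat^b(\mscr\perv(X))$ is the standard one. The realization $\rat_X\colon \dn^b(X)\to \dcat^b_{\geo}(X)$ is $t$-exact for the perverse $t$-structures. It is also conservative, and $\rat_X$ together with its heart-level version $\mscr\perv(X)\to\perv_{\geo}(X)$ is faithful and exact, so it detects vanishing of objects. Hence for $K\in\dn^b(X)$ we have $K\in \dn^{\le 0}$ (resp.\ $\dn^{\ge 0}$) if and only if $\phnor^i(\rat_X K)=\rat_X(\phnor^i_{\univ}K)=0$ for $i>0$ (resp.\ $i<0$). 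So it suffices to show that $\rat_{X_\sigma}\Psi_f^{\anor}[-1](M)$ is perverse for $M\in\mscr\perv(X_\eta)$. The functor $\Psi_f^{\anor}$ is triangulated, so exactness on the heart gives $t$-exactness on bounded objects.

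Next I will identify $\rat\circ\Psi_f^{\anor}$ with $\Psi_f^{\an}\circ\rat$. Ayoub's functor is defined on $\dn(X_\eta)$ by the formula $i^*j_*(M\otimes f_\eta^*\uscr^{\mot})$, which uses only the six operations and colimits. The Nori-to-geometric realization $\dn(-)\to\dcat_{\geo}(-)$ (Ind-extended) commutes with the six operations and with colimits. We therefore get a natural isomorphism $\rat_{X_\sigma}\Psi_f^{\anor}\simeq \Psi_f^{\anor,\geo}\rat_{X_\eta}$, as noted after the definition ("if $R$ is a morphism of coefficient systems..."). By Proposition \ref{prop: compactness of Ayoub's functors}, $\Psi_f^{\anor}$ preserves compact objects, so $\Psi_f^{\anor}(M)\in\dn^b(X_\sigma)$ and everything stays bounded. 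The corollary following Proposition \ref{prop: isomorphisms of sp on geometric complexes}, which rests on Proposition \ref{prop: logarithm of nearby cycles}, gives $\Psi_f^{\anor}\simeq\Psi_f^{\an}$ on $\dcat_{\geo}$. Combining these, $\rat\,\Psi_f^{\anor}[-1](M)\simeq\Psi_f^{\an}[-1](\rat M)$.

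Finally, $\Psi_f^{\an}[-1]$ is perverse $t$-exact on $\dcat^b_{\ct}$ by Gabber's theorem, and it preserves geometric origin. So $\Psi_f^{\an}[-1](\rat M)$ is a perverse sheaf of geometric origin, and the first paragraph gives the claim. An alternative route avoiding the full $\Psi$ comparison is to use Proposition \ref{prop: logarithm of nearby cycles}: $\Psi_f^{\anor}(M)\simeq\Upsilon_{f_n}^{\anor}(e_n)_\eta^*(M)$ for suitable $n$, with $(e_n)_\eta$ étale so that $(e_n)^*_\eta$ is $t$-exact. This reduces the problem to $\Upsilon^{\anor}[-1]\simeq\log^{\an}[-1]\simeq\Upsilon^{\an}[-1]$ (Proposition \ref{prop: isomorphisms of sp on geometric complexes}), whose exactness was shown there.

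The main obstacle is the second step. One has to make sure that the realization $\dn\to\dcat_{\geo}$ really intertwines Ayoub's construction, including the infinite colimit defining $\uscr^{\mot}$ and the transition maps indexed by $\Delta\times\mathbb{N}^\times$, with the corresponding construction on $\dcat_{\geo}$. This needs the realization to be a colimit-preserving morphism of coefficient systems on Ind-categories, as provided by Tubach's construction. The reduction via Proposition \ref{prop: logarithm of nearby cycles}, which only involves finite data once $n$ is fixed, is the fallback if this is delicate.
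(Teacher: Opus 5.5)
Your proposal is correct and follows essentially the paper's argument. You reduce to $\dcat^b_{\geo}$ using that the realization $\dn^b(-)\to\dcat^b_{\geo}(-)$ is conservative and perverse $t$-exact, then identify Ayoub's functor with the classical one, whose perverse $t$-exactness is Gabber's theorem (Proposition \ref{prop: restrictions of nearby functors on geometric complexes}). The paper takes exactly your fallback route, passing through Proposition \ref{prop: logarithm of nearby cycles} to $\Upsilon_f^{\anor}[-1]\simeq\Upsilon_f^{\an}[-1]$; the $\Psi$-level corollary you invoke in your main route is itself obtained from that comparison, so both routes rest on the same chain.
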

\begin{proof}
    The functor $\dn^b(-) \longrightarrow \dcat^b_{\geo}(-)$ is conservative and perverse $t$-exact so it suffices to prove the same result for $\dcat^b_{\geo}(-)$. By proposition \ref{prop: logarithm of nearby cycles}, we reduce to the perverse $t$-exactness of $\Upsilon_f^{\anor}[-1]$. This follows from corollary \ref{cor: criterion for compactness} and proposition \ref{prop: restrictions of nearby functors on geometric complexes}.
\end{proof}
\begin{theorem} \label{thm: nearby functors on Nori motives coincide}
There exists a natural isomorphism of specialization systems (in the sense of \cite[Definition 3.1.1]{ayoub-thesis-2})
\begin{align*}
    \Psi^{\univ}_f \simeq \Psi_f^{\anor} \colon \dn^b(X_{\eta}) \longrightarrow \dn^b(X_{\sigma}) \\ 
    \Upsilon^{\univ}_f \simeq \Upsilon_f^{\anor} \colon \dn^b(X_{\eta}) \longrightarrow \dn^b(X_{\sigma}).
\end{align*}
In particular, there is a chain of specialization systems under various realization functors
\begin{equation*}
    \begin{tikzcd}[column sep=large, row sep = 0.2]
        \daetct(-) \arrow[rr,"\betti^*",bend left =  30] \arrow[r,"\operatorname{Nri}^*"] & \dn^b(-) \arrow[r] & \dcatct^b(-) \\ 
        \Psi_f^{\anor},\Upsilon_f^{\anor} \arrow[r] & \Psi_f^{\univ},\Upsilon_f^{\univ} \arrow[r] & \Psi_f^{\an},\Upsilon_f^{\an}.
        \end{tikzcd}
    \end{equation*}
\end{theorem}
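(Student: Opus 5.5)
The strategy is to reduce the comparison to the abelian level of perverse Nori motives and then invoke the universal property. By the preceding lemma, $\Psi_f^{\anor}[-1]$ is perverse $t$-exact on $\dn^b(-)$. The same argument applies to $\Upsilon_f^{\anor}[-1]$: reduce via the conservative, $t$-exact realization $\dn^b(-)\to\dcat^b_{\geo}(-)$, and use Proposition \ref{prop: isomorphisms of sp on geometric complexes}, which identifies $\Upsilon_f^{\anor}\simeq\Upsilon_f^{\an}$ on geometric complexes. Hence both Ayoub functors restrict to exact functors
\begin{equation*}
\Psi_f^{\anor}[-1],\ \Upsilon_f^{\anor}[-1]\colon \mscr\perv(X_{\eta})\longrightarrow\mscr\perv(X_{\sigma}).
\end{equation*}
Ayoub's functors commute with the realizations $\operatorname{Nri}^*$ and $\betti^*$, since they are built from the six operations and the motives $\uscr^{\mot},\uscr^{\un}$. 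So $\rat_{X_\sigma}\circ\Psi_f^{\anor}[-1]\simeq\Psi_f^{\an}[-1]\circ\rat_{X_\eta}$, using the isomorphism $\Psi_f^{\anor}\simeq\Psi_f^{\an}$ on $\dcat_{\geo}$ obtained from the corollary after Proposition \ref{prop: isomorphisms of sp on geometric complexes}. The analogous statement for $\Upsilon$ follows from Corollary \ref{cor: unipotent nearby cycles under betti realizations}.

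The key step is to show that the composite $\Psi_f^{\anor}[-1]\circ\phnor^0_{\univ}$ is compatible with the lifting datum defining $\Psi_f^{\univ}[-1]$. Concretely, on the generating functor $\daetct(X_\eta)\to\mscr\perv(X_\sigma)$ we need a natural isomorphism
\begin{equation*}
\Psi_f^{\anor}[-1]\circ\phnor^0_{\univ}\simeq\phnor^0_{\univ}\circ\Psi_f^{\anor}[-1],
\end{equation*}
compatible after applying $\rat$ with the Betti comparison $\betti^*\Psi_f^{\anor}\simeq\Psi_f^{\an}\betti^*$ used to construct $\Psi_f^{\univ}$. This holds because $\Psi_f^{\anor}$ commutes with the Nori realization $\daetct\to\dn^b$, and because $\phnor^0$ commutes with a $t$-exact functor. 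By uniqueness in the universal property of the Ivorra--Morel abelian factorization, exact functors on $\mscr\perv$ are determined up to unique isomorphism by their compatibility with $\rat$ and the lifting datum. This gives $\Psi_f^{\univ}[-1]\simeq\Psi_f^{\anor}[-1]$ on $\mscr\perv(X_\eta)$, and likewise for $\Upsilon$.

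To pass to $\dn^b$, note that $\dn^b=\dcat^b(\mscr\perv)$. An exact functor of hearts has a unique $t$-exact derived extension compatible with the realization functor of the $t$-structure. Ayoub's functor on $\dn^b$ is $t$-exact with the given restriction to the heart, so it agrees with the derived functor $\Psi_f^{\univ}$. Here I would invoke the enhanced version of Beilinson's argument, namely that a $t$-exact functor between bounded derived categories of abelian categories is the derived functor of its restriction, which holds at the $\infty$-categorical level.

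It then remains to upgrade the identification to specialization systems, that is, to check compatibility with $\alpha_g$ and $\beta_g$. As in the proof of Proposition \ref{prop: isomorphisms of sp on geometric complexes}, split into the cases $g$ smooth (using $g^{\dagger}$, which is $t$-exact) and $g$ a closed immersion (using $\beta_g$ with $g_*$ exact). In each case both exchange transformations restrict to natural transformations of exact functors on $\mscr\perv$. These are compatible after $\rat$, because Ayoub's base change maps realize to the classical ones, and $\rat$ is faithful. Hence the two transformations agree on the heart, and therefore on $\dn^b$.

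The main obstacle is the faithful derived-extension step. The universal property only controls functors on abelian categories, while Ayoub's functor lives on $\dn^b$. I would handle this by invoking Tubach's identification of $\dn^b$ with an $\infty$-categorical enhancement, and using that a $t$-exact functor out of $\dcat^b(\acal)$ is determined by its restriction to $\acal$ via left Kan extension from the heart.
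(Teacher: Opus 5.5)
Your proposal is correct and follows essentially the paper's route. Like the paper, you first use perverse $t$-exactness of Ayoub's functors (checked after the conservative $t$-exact realization) to reduce to $\mscr\perv$; you then identify them there with $\Psi_f^{\univ}[-1]$ and $\Upsilon_f^{\univ}[-1]$ via uniqueness of exact functors compatible with the lifting datum and $\rat$, which is what the paper does by invoking \cite[Proposition 2.5]{terenzi-2024}; and you finally treat smooth morphisms and closed immersions separately for the exchange maps.

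The differences are minor. You make explicit the passage from the heart to $\dn^b$, which the paper takes for granted; the clean justification is the universal property of $\dcat^b(\acal)$ as the stable hull of $\acal$, rather than a left Kan extension from the heart. You also check the base-change squares by faithfulness of $\rat$ instead of re-applying the uniqueness statement.
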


\begin{proof}
First we see that the functors $\Psi_f^{\anor}[-1],\Upsilon_f^{\anor}[-1] \colon \dn^b(X_{\eta}) \longrightarrow \dn^b(X_{\sigma})$ are perverse $t$-exact since these functors are compatible with the Betti realizations and Betti realizations are conservative and perverse $t$-exact. Consequently, it suffices to prove that 
\begin{equation*} 
\Psi^{\univ}_f[-1] \simeq \Psi_f^{\anor}[-1] \colon \mscr\perv(X_{\eta}) \longrightarrow \mscr\perv(X_{\sigma}).
\end{equation*} 
Since Ayoub's nearby functors are compatible under realizations, there is a commutative diagram 
\begin{equation*}
    \begin{tikzcd}[row sep=large, column sep = scriptsize]
    & &  \mscr\perv(X_{\eta}) \arrow[dd,dashed] \arrow[rr] & & \perv(X_{\eta}) \arrow[dd,"{}^p\Psi_f^{\an}"] \\ 
        \daetct(X_{\eta}) \arrow[rru,color=blue,bend left = 10] \arrow[r]  \arrow[dd,"{}^p\Psi_f^{\anor}",swap] & \dn^b(X_{\eta})  & &\dcatct^b(X_{\eta}) \arrow[ru,"\phnor^0"] & \\
        & & \mscr\perv(X_{\sigma}) \arrow[rr,dashed] & &  \perv(X_{\sigma}) \\ 
        \daetct(X_{\sigma}) \arrow[r]  \arrow[rru,color=blue,bend left = 10,dashed]  \arrow[rrr,bend right = 10,"\betti^*_{X_{\sigma}}",swap]  & \dn^b(X_{\sigma}) \arrow[rr] &   & \dcatct^b(X_{\sigma}) \arrow[ru,"\phnor^0"]  & \arrow[from=2-2, to=1-3, dashed] \arrow[from=4-2, to=3-3,dashed] \arrow[from=2-1,to=2-4,bend left = 10,"\betti^*_{X_{\eta}}",pos=0.3,crossing over] \arrow[from=2-4,to=4-4,"{}^p\Psi_f^{\an}",crossing over,pos = 0.3] \arrow[from=2-2,to=2-4,crossing over]  \arrow[from=2-2,to=4-2,"{}^p\Psi_f^{\anor}",crossing over].
    \end{tikzcd}
\end{equation*}
The two blue arrows are precisely the universal cohomology in the universal abelian factorization (see \cite[Corollary 4.20]{tubach-2025}). This results in the following commutative diagram by taking the zero-th perverse cohomology 
\begin{equation*}
    \begin{tikzcd}[sep=large]
        \daetct(X_{\eta}) \arrow[r] \arrow[d,"{}^p\Psi_f^{\anor}",swap] & \mscr\perv(X_{\eta})  \arrow[r] \arrow[d,"{}^p\Psi_f^{\anor}"] & \perv(X_{\eta}) \arrow[d,"{}^p\Psi_f^{\an}"] \\
        \daetct(X_{\sigma}) \arrow[r] & \mscr\perv(X_{\sigma}) \arrow[r]  & \perv(X_{\sigma}).
    \end{tikzcd}
\end{equation*}
However, up to a natural isomorphism, there exists a unique such middle \textit{exact} arrow making the diagram commutative, namely, $\Psi_f^{\univ}$ (see \cite[Proposition 2.5]{terenzi-2024}); in other words, $\Psi_f^{\univ} =\Psi_f^{\anor}$. Note that this does not imply immediately that this is an isomorphism of specialization systems but we can repeat the argument to see that there is a commutative diagram
\begin{equation*}
    \begin{tikzcd}[sep=large]
        \daetct(X_{\eta}) \arrow[r] \arrow[d,"g_{\sigma}^{\dagger} {}^p\Psi_f^{\anor}",swap] & \mscr\perv(X_{\eta})  \arrow[r] \arrow[d,"g_{\sigma}^{\dagger} {}^p\Psi_f^{\anor}"] & \perv(X_{\eta}) \arrow[d,"g_{\sigma}^{\dagger}{}^p\Psi_f^{\an}"] \\
        \daetct(Y_{\sigma}) \arrow[r] & \mscr\perv(X_{\sigma}) \arrow[r]  & \perv(X_{\sigma}).
    \end{tikzcd}
\end{equation*} 
provided that $g \colon Y \longrightarrow X$ is a smooth morphism. By using \cite[Proposition 2.5]{terenzi-2024} again, we obtain that $g_{}$
$g^{\dagger}_{\sigma}\Psi_f^{\univ}[-1] \simeq \Psi_{f \circ g}^{\univ}[-1]g_{\eta}^{\dagger} = \Psi_{f \circ g}^{\anor}[-1] g_{\eta}^{\dagger} \simeq g_{\sigma}^{\dagger}\Psi_f^{\anor}[-1]$ if $g$ is smooth and similar isomorphisms if $g$ is a closed immersion and then taking adjoint as in the proof of proposition \ref{prop: isomorphisms of sp on geometric complexes}. 

\end{proof}

\subsection{The monodromy sequence on Nori motives} Thanks to corollary \ref{cor: unipotent nearby cycles under betti realizations}, the following definition makes sense
\begin{defn}
   The \textit{Nori-motivic monodromy operator}
 \begin{equation*}
     N^{\univ} \colon \Upsilon_f^{\univ}[-1] \longrightarrow \Upsilon_f^{\univ}(-1)[-1]
 \end{equation*}
 is transformation obtained by using the property \textbf{P2} in \cite[Section 2]{florian+morel-2019}. 
\end{defn}
\begin{cor}
    The operator $N^{\univ}$ is nilpotent on $\dn^b(X_{\eta})$. 
\end{cor}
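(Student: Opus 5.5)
The plan is to reduce nilpotency on Nori motives to nilpotency on complexes of geometric origin, using that the realization functor $\rat \colon \dn^b(-) \longrightarrow \dcat^b_{\geo}(-)$ is perverse $t$-exact and faithful on hearts. First I reduce to perverse Nori motives. By Beilinson's equivalence for Nori motives (the defining identification $\dn^b = \dcat^b(\mscr\perv)$), the transformation $N^{\univ}$ on $\dn^b(X_{\eta})$ is the derived functor of the transformation $N^{\univ} \colon \Upsilon_f^{\univ}[-1] \longrightarrow \Upsilon_f^{\univ}(-1)[-1]$ between exact functors $\mscr\perv(X_{\eta}) \longrightarrow \mscr\perv(X_{\sigma})$. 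Every object of $\dn^b(X_{\eta})$ has finitely many nonzero perverse cohomology objects, so it is a finite iterated extension of shifted perverse motives. I will therefore show two things: there is a uniform bound $m$ with $(N^{\univ})^{\circ m}=0$ on each $\phnor^i(M)$, and nilpotency passes to extensions. The second point is standard. If $A \to B \to C \to +1$ is a triangle and $N^a$ kills $\Upsilon(A)$ while $N^c$ kills $\Upsilon(C)$, then $N^{a+c}$ kills $\Upsilon(B)$, because the composite factors through $\Upsilon(A)$ by naturality and exactness of the triangle.

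The main step is the heart. For $P \in \mscr\perv(X_{\eta})$, property \textbf{P2} of \cite[Section 2]{florian+morel-2019} says that $\rat_{X_{\sigma}}(N^{\univ}_P)$ is the Betti monodromy logarithm $N^{\an}$ on $\Upsilon_f^{\an}[-1](\rat P)$. This is exactly how $N^{\univ}$ was built from Corollary~\ref{cor: unipotent nearby cycles under betti realizations}. By Proposition~\ref{prop: isomorphisms of sp on geometric complexes}, $N^{\an}$ is nilpotent on $\dcat^b_{\geo}$. So there is an $m$ with $\rat((N^{\univ}_P)^{\circ m}) = (N^{\an})^{\circ m} = 0$. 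Since $\rat_{X_{\sigma}} \colon \mscr\perv(X_{\sigma}) \longrightarrow \perv(X_{\sigma})$ is faithful and exact, it follows that $(N^{\univ}_P)^{\circ m}=0$ in $\mscr\perv(X_{\sigma})$. Faithfulness holds because the universal abelian factorization yields a faithful exact functor.

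Applied directly, this gives a bound depending on $P$, which is all we need: there are only finitely many $\phnor^i(M)$. If a bound independent of $P$ is wanted, I would use Lemma~\ref{lem: nilpotency of monodromy operator}. It gives the splitting with $m=m(M)$, and the morphism $N_m$ vanishing on $\log_f(M)$ for large $m$ forces $N^{\circ m} = m!\,N_m = 0$.

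I expect the main obstacle to be the first reduction. The point to check is that $N^{\univ}$, which is defined only on the abelian level, extends to a natural transformation of the derived functors on $\dn^b$ that is compatible with the perverse filtration triangles. I would handle this by invoking Theorem~\ref{thm: nearby functors on Nori motives coincide}: it identifies $\Upsilon_f^{\univ}$ with $\Upsilon_f^{\anor}$, which is defined at the $\infty$-level. I would then identify $N^{\univ}$ with Ayoub's $N$ transported along this isomorphism. Again by uniqueness in the universal property, it suffices to check this on hearts after applying $\rat$, where it is Corollary~\ref{cor: unipotent nearby cycles under betti realizations}.
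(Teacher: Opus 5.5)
Your proposal is correct and follows the paper's own argument. You reduce to $\mscr\perv(X_{\eta})$ using that the $t$-structure on $\dn^b(X_{\eta})$ is bounded, and then get nilpotency on the heart from nilpotency of the Betti monodromy logarithm. You usefully make explicit the faithfulness and exactness of $\rat_{X_{\sigma}}$, which the paper leaves implicit; with it, the \'etale-motivic nilpotency the paper also cites is not needed.

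Two inessential remarks. First, extending $N^{\univ}$ to $\dn^b$ needs no detour through Theorem~\ref{thm: nearby functors on Nori motives coincide}: a natural transformation between exact functors of abelian categories extends termwise to bounded complexes. Second, drop your aside about a bound independent of $P$. Lemma~\ref{lem: nilpotency of monodromy operator} only gives $m(M)$, and no uniform bound exists: for example, the nilpotency index of $N^{\univ}$ on $\Upsilon_{\id}^{\univ}(\operatorname{Sym}^n(\kcal^{\nori}))$ grows with $n$.
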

\begin{proof}
    Indeed, it suffices to prove that $N^{\univ}$ is nilpotent on $\mscr\perv(X_{\eta})$ since the standard $t$-structure on $\dn^b(X_{\eta})$ is bounded. This follows from the fact that both monodromy operators on \'etale motives and constructible complexes of geometric origin are nilpotent. 
\end{proof}
It is however unclear to us that there should exist a monodromy sequence
\begin{equation*}
    i^*j_* \longrightarrow \Upsilon_f^{\univ} \overset{N^{\univ}}{\longrightarrow} \Upsilon_f^{\univ}(-1) \longrightarrow +1
\end{equation*}
on Nori motives. To do this, it suffices to show the existence of a monodromy sequence for $\Upsilon_f^{\anor}$ thanks to \ref{thm: nearby functors on Nori motives coincide}. The idea is to use the Nori-theoretic logarithmic specialization system. The Kummer extension for Nori motives
\begin{equation*}
    \mathds{1}_{\gm_m} \longrightarrow \kcal^{\nori} \longrightarrow \mathds{1}_{\gm_m}(-1) \longrightarrow +1.
\end{equation*}
(we do not claim that this cofiber sequence is uniquely determined as we cannot compute the extension group for Nori motives) gives rise to a monodromy sequence
\begin{equation*}
    i^*j_* \longrightarrow \log_f^{\nori} \overset{N}{\longrightarrow} \log_f^{\nori}(-1) \longrightarrow +1
\end{equation*}
and now we have that
\begin{prop}
There are isomorphisms of specialization systems $\Upsilon^{\univ} \simeq \Upsilon^{\anor} \simeq \log_f^{\nori} \colon \dn^b(X_{\eta}) \longrightarrow \dn^b(X_{\sigma})$ and there is a commutative diagram
\begin{equation*}
    \begin{tikzcd}[sep=large]
        \Upsilon_f^{\univ} \arrow[r,"N^{\univ}"] \arrow[d,"\sim"] & \Upsilon^{\univ}_f(-1) \arrow[d,"\sim"] \\ 
        \log_f^{\nori} \arrow[r,"N"] & \log_f^{\nori}(-1).
    \end{tikzcd}
\end{equation*}
In particular, there exists a monodromy sequence
\begin{equation*}
    i^*j_* \longrightarrow \Upsilon_f^{\univ} \overset{N^{\univ}}{\longrightarrow} \Upsilon_f^{\univ}(-1) \longrightarrow +1.
\end{equation*}
\end{prop}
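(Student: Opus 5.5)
The isomorphism $\Upsilon^{\univ}_f \simeq \Upsilon^{\anor}_f$ is already Theorem \ref{thm: nearby functors on Nori motives coincide}, so what remains is the isomorphism $\Upsilon_f^{\anor} \simeq \log_f^{\nori}$ on $\dn^b(-)$ and the identification of $N^{\univ}$ with the Kummer operator $N$. The monodromy sequence then follows formally: the cofiber sequence $i^*j_* \to \log_f^{\nori} \overset{N}{\to} \log_f^{\nori}(-1) \to +1$ holds on $\dn(X_\sigma)$, obtained by tensoring with the Nori Kummer extension and applying $i^*j_*$. Transporting it along the vertical isomorphisms gives the sequence for $\Upsilon_f^{\univ}$.

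For $\Upsilon^{\anor}_f \simeq \log^{\nori}_f$ I would copy the first step of the proof of Proposition \ref{prop: isomorphisms of sp on geometric complexes}. The morphism $\kcal \to \uscr^{\un}$ in $\daet(\gm_m)$ (from \cite{ayoub-thesis-2}) extends to $\mathrm{Log}^\vee \to \uscr^{\un}$ and hence gives a morphism of specialization systems $\log_f \to \Upsilon_f^{\anor}$ on $\daet$. Applying the Nori realization $\nori^*$, which commutes with the six operations and with colimits, gives a morphism $\log^{\nori}_f \to \Upsilon^{\anor}_f$ on $\dn(-)$, since both sides are defined by the same formulas with the realized Kummer motive. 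By Corollary \ref{cor: criterion for iso of sp}, it suffices to check that this map is an isomorphism on $\mathds{1}_{X_\eta}$ in the one-branch normal crossing situation (2) of Proposition \ref{prop: computation of nearby cycles}. There both sides are the images under $\nori^*$ of the étale-motivic values, and on $\daet$ the map is an isomorphism by \cite[Théorème 11.14]{ayoub-2014}. Alternatively, one can use that $\dn^b \to \dcat^b_{\geo}$ is conservative together with Proposition \ref{prop: isomorphisms of sp on geometric complexes}. In particular $\log^{\nori}_f$ preserves $\dn^b$.

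For the square, both $N^{\univ}$ and $N$ (transported to $\Upsilon^{\univ}_f$ via these isomorphisms) are natural transformations $\Upsilon^{\univ}_f[-1] \to \Upsilon^{\univ}_f(-1)[-1]$ between perverse $t$-exact functors. It therefore suffices to compare them on $\mscr\perv(X_\eta)$, then pass to derived functors using Beilinson-type boundedness. By the uniqueness part of property \textbf{P2} in \cite{florian+morel-2019} (equivalently, \cite[Proposition 2.5]{terenzi-2024} applied to transformations), a transformation on universal abelian factorizations is determined by its compatible avatars on $\daetct$ and on $\perv_{\geo}$. By construction, $N$ on $\log^{\nori}_f$ is the realization of Ayoub's motivic $N$ on $\log_f$. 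Its Betti realization is $N$ on $\log^{\an}_f$, which agrees with $N^{\an}=\log T_u$ by the proposition comparing $N$ with $N^{\an}$ and by Corollary \ref{cor: unipotent nearby cycles under betti realizations}. This is exactly the input that defines $N^{\univ}$, so the two agree.

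The main obstacle is this last step. \textbf{P2} produces transformations from data on $\daetct$ compatible with data on $\perv$, but $N$ on $\log^{\nori}_f$ is defined directly on $\dn$. We must ensure its restriction to $\mscr\perv$, precomposed with $\phnor^0_{\univ}$, coincides with the realization of the motivic $N$. That requires a compatibility between $\phnor^0_{\univ}$ and $\nori^*$, namely \cite[Corollary 4.20]{tubach-2025}, applied as in the proof of Theorem \ref{thm: nearby functors on Nori motives coincide}. A further subtlety is that we cannot claim $\kcal^{\nori}$ is unique. We sidestep this by always using $\kcal^{\nori}=\nori^*\kcal$, never an abstract extension.
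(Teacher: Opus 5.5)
Your proposal is correct and follows essentially the same route as the paper. The paper likewise takes $\Upsilon_f^{\univ}\simeq\Upsilon_f^{\anor}$ from Theorem \ref{thm: nearby functors on Nori motives coincide}, builds $\log_f^{\nori}\to\Upsilon_f^{\anor}$ from the Kummer-to-$\uscr^{\un}$ morphism checked on one-branch models via \'etale motives, and identifies the transported $N$ with $N^{\univ}$ by uniqueness of transformations on the universal abelian factorization, applied to the cube of compatibilities. The one difference is the citation for uniqueness: at the level of natural transformations the paper invokes \cite[Proposition 3.4]{terenzi-2024} together with property \textbf{P2} of \cite{florian+morel-2019}, not Proposition 2.5, which concerns functors.
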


\begin{proof}
The isomorphism $\Upsilon_f^{\univ} \simeq \Upsilon_f^{\anor}$ is theorem \ref{thm: nearby functors on Nori motives coincide}. Now the functor $\log_f^{\nori}$ preserves compact motives by corollary \ref{cor: log sp preserves compact motives}. As in the proof of proposition \ref{prop: isomorphisms of sp on geometric complexes}, there is a morphism of specialization systems $\log_f^{\nori} \longrightarrow \Upsilon_f^{\anor}$ and we just have to check their values on one-branch local-crossing models and we are done since they are images of corresponding functors on \'etale motives. As in the proof of, we obtain a commutative diagram
\begin{equation*}
    \begin{tikzcd}[column sep=scriptsize, row sep=large]
        \daetct(X_{\eta}) \arrow[rd,"\log_f^{\anor}\simeq \Upsilon_f^{\anor}"{name=bang1}] \arrow[rr,"\phnor^{\univ}"] \arrow[dd,equal] & & \mscr\perv(X_{\eta}) \arrow[dd,equal,dashed] \arrow[rr] \arrow[rd,"\Upsilon_f^{\univ}"]& & \perv(X_{\eta}) \arrow[dd,equal,dashed] \arrow[rd,"\Upsilon_f^{\an}"{name=bang3}] & \\ 
        & \daetct(X_{\sigma}) \arrow[rr,"\phnor^{\univ}",near start,crossing over]   & & \mscr\perv(X_{\sigma})  \arrow[rr,crossing over] & & \perv(X_{\eta}) \arrow[dd,equal] \\ 
        \daetct(X_{\eta}) \arrow[rd,"\log_f^{\anor}(-1) \simeq \Upsilon_f^{\anor}(-1)"{name=bang2},swap] \arrow[rr,"\phnor^{\univ}",near end,dashed]  & & \mscr\perv(X_{\eta}) \arrow[rr,dashed] \arrow[rd,"\Upsilon_f^{\univ}(-1)",dashed] & & \perv(X_{\eta}) \arrow[rd,"\Upsilon_f^{\an}(-1)"{name=bang4}] & \\ 
        & \daetct(X_{\sigma}) \arrow[rr,"\phnor^{\univ}"]  & & \mscr\perv(X_{\sigma}) \arrow[rr] & & \perv(X_{\eta}) \arrow[from=2-4, to=4-4,equal,crossing over] 
        \arrow[from=2-2, to=4-2,equal,crossing over] \arrow[Rightarrow, from=2-2, to=3-1, "N",shorten <=15pt, shorten >=15pt] \arrow[Rightarrow, from=2-4, to=3-3, "\theta",shorten <=15pt, shorten >=15pt]  \arrow[Rightarrow, from=2-6, to=3-5, "N",shorten <=17pt, shorten >=17pt]
    \end{tikzcd}
\end{equation*}
The rest of the argument is similar to the proof of theorem \ref{thm: nearby functors on Nori motives coincide}, in which we replace \cite[Proposition 2.5]{terenzi-2024} by \cite[Proposition 3.4]{terenzi-2024} (see also \cite[Section 2, Property \textbf{P2}]{florian+morel-2019}); namely, the transformation $\theta \simeq N^{\univ}$ is necessarily unique up to an isomorphism.
\end{proof}

\section{Application: The motivic integral identity}
In this section, we deduce the commutation between the Braden transformations and the nearby functors. In particular, the motivic integral identity of Kontsevich-Soibelman follows. 
\subsection{Commutation with Braden transformations}
Let $X$ be a $k$-variety endowed with a $\gm_{m,k}$-action. Follow \cite{drinfeld+gaitsgory-2014}\cite{richarz-2018}, we can define three new spaces. If $T$ is a $k$-variety, then we define the \textit{space of fixed points as}, the \textit{attractor}, the \textit{repeller}, respectively 
\begin{align*}
X^0(T) & = \left \{\text{equivariant morphisms} \ T \longrightarrow X_T \right \} \\ 
    X^+(T) & = \left \{\text{equivariant morphisms} \ (\mathbb{A}^1_T)^+ \longrightarrow X_T\right \} \\ 
    X^-(T) & = \left \{\text{equivariant morphisms} \ (\mathbb{A}^1_T)^- \longrightarrow X_T\right \}.
\end{align*}
(we note that these are algebraic spaces at first, but they are indeed schemes thanks to \cite{richarz-2018}). The structural morphisms $(\mathbb{A}^1_k)^{\pm} \longrightarrow \Spec(k)$ define morphisms $s^{\pm} \colon X^0 \longrightarrow X^{\pm}$. The zero sections $\Spec(k) \longrightarrow (\mathbb{A}_k^1)^{\pm}$ define morphisms $\pi^{\pm} \colon X^{\pm}  \longrightarrow X^0$ such that $\pi^{\pm} \circ s^{\pm} = \id$. Analogously, the inclusions $\gm_{m,S} \longhookrightarrow (\mathbb{A}_S^1)^{\pm}$ define morphisms $e^{\pm} \colon X^{\pm} \longrightarrow X$ such that $e^+ \circ s^+ = e^- \circ s^-$ is the inclusion functor $X^0 \longhookrightarrow X$. As in \cite{braden-2003}\cite{richarz-2018}\cite{drinfeld+gaitsgory-2014}\cite{drinfeld-2015}, there is the \textit{Braden transformation} 
\begin{equation*}
     (\pi^-)_*(e^-)^! \longrightarrow (\pi^+)_!(e^+)^*.
\end{equation*}
Let us say a motive $M \in \dn(X)$ is $\gm_m$-\textit{equivariant} if there is an isomorphism $a^*(M) \simeq p^*(M)$ in $\dn(\gm_{m,k} \times_k X)$, where $a,p \colon \gm_{m,k} \times X \longrightarrow X$ is the action and the projection, respectively. A variety $X$ (or more generally, an Artin stack) is said to \textit{$\et$-locally linearizable} if there exists an \'etale cover $(u_i \colon U_i \longrightarrow X)_{i\in I}$ consisting of $\gm_m$-equivariant \'etale morphism with $U_i$ affine. Braden's theorem asserts that if the $\gm_m$-action on $X$ is $\et$-locally linearizable, then the Braden transformation is an isomorphism on $\gm_m$-equivariant motive. 
\begin{prop}
Let $X$ be a $k$-variety equipped with a $\et$-locally linearizable $\gm_{m,k}$-action. Let $f \colon X \longrightarrow \mathbb{A}^1_k$ be a $\gm_{m,k}$-equivariant morphism where the target is equipped with the trivial $\gm_{m,k}$-action, then there exists a commutative diagram 
   \begin{equation*}
        \begin{tikzcd}[sep=large]
            (\pi^-_{\sigma})_*(e^-_{\sigma})^!\Psi_f^{\univ}(M) \arrow[d] & \Psi_{f^0}^{\univ}(\pi^-_{\eta})_*(e^-_{\eta})^!(M) \arrow[d] \arrow[l] \\ 
            (\pi^+_{\sigma})_!(e^+_{\sigma})^*\Psi_f^{\univ}(M) \arrow[r]  & \Psi_{f^0}^{\univ} (\pi_{\eta}^+)_!(e^+_{\eta})^*(M)
        \end{tikzcd}
    \end{equation*}
    natural in $M \in \dn(X_{\eta})$ and all arrows are isomorphisms provided that $M$ is $\gm_m$-equivariant (the statement also holds for $\Upsilon_f^{\univ}$).
\end{prop}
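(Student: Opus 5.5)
The plan is to transport the statement to Ayoub's functors, where the nearby functor is expressed through the six operations, and prove it there. By Theorem \ref{thm: nearby functors on Nori motives coincide}, there is an isomorphism of specialization systems $\Psi_f^{\univ} \simeq \Psi_f^{\anor}$, and likewise $\Upsilon_f^{\univ}\simeq\Upsilon_f^{\anor}$, on $\dn^b$. Since Ayoub's functors commute with direct sums and $\dn = \operatorname{Ind}(\dn^b)$, this extends to $\dn$. Both routes around the diagram are built from the base change transformations $\alpha_g,\beta_g$ of the specialization system, together with the Braden transformation. It therefore suffices to construct the square and prove the isomorphism statement for
\begin{equation*}
\Psi_f^{\anor}(M) = i^*j_*(M \otimes f_\eta^*\uscr^{\mot}),
\end{equation*}
computed inside the six-functor formalism $\dn(-)$.

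\textbf{Step 1: the four arrows.} The vertical arrows are the Braden transformations on $X_\sigma$ and $X_\eta$. The lower horizontal arrow is the composite of $\alpha_{e^+}$ and $\beta_{\pi^+}$, using the ex-change $\Psi (\pi^+)_! \to (\pi^+)_!\Psi$ adjoint to $\alpha$. The upper arrow is built dually from $\beta_{\pi^-}$ and the $!$-version of $\alpha_{e^-}$. Here $f^0$ is the restriction of $f$ to $X^0$, and $f\circ e^{\pm} = f^0\circ\pi^{\pm}$ holds because $f$ is invariant. Commutativity of the square then reduces to the standard compatibility of the unit and counit maps, since $i^*$, $j_*$ and $-\otimes f_\eta^*\uscr$ each commute with the relevant pushforwards and pullbacks through base change and projection formulas. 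This is exactly the argument of \cite{bang-2024} (cf. \cite{richarz-2018}), and it uses only that the functor is written in terms of the six operations. I would simply transcribe that argument verbatim to $\dn$, using the six-functor formalism of \cite{florian+morel-2019,terenzi-2025}.

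\textbf{Step 2: isomorphisms for equivariant $M$.} First, $\Psi_f(M)$ is again $\gm_m$-equivariant. This follows from smooth base change $\alpha_a$ and $\alpha_p$ along the action and projection maps, which are smooth, together with $f\circ a = f\circ p$. Braden's theorem in $\dn$ (valid in any six-functor formalism, for $\et$-locally linearizable actions) then makes both vertical arrows isomorphisms. For the horizontal arrows I follow \cite{richarz-2018}. The hyperbolic localization functor $(\pi^+)_!(e^+)^*$ is isomorphic both to $(\pi^-)_*(e^-)^!$ and, by Drinfeld–Gaitsgory's contraction principle, to $(s^+)^*(e^+)^*$ and $(s^-)^!(e^-)^!$ on equivariant objects. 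One of these expressions commutes with $\Psi$ through $\alpha$ for $*$-pullbacks, and another through the proper $\beta$ ... combining the two shows the comparison maps are isomorphisms.

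\textbf{Main obstacle.} I expect the delicate point to be the claim that the horizontal maps are isomorphisms, since $e^\pm$ are neither smooth nor proper. I would first try to reduce to the \'etale-motivic case, where \cite{bang-2024} proves the result for $\Psi^{\anor}$. One applies $\operatorname{Nri}^*$ on generators. Because both sides commute with colimits and $\dn(X)$ is compactly generated by images of \'etale motives (Proposition \ref{prop: compactness of Ayoub's functors}), checking on these images would suffice. However, equivariance of $M$ is not a property of generators, so if this reduction fails I would run Richarz's contraction argument directly in $\dn$. That argument only needs the six operations and homotopy invariance, both available for $\dn$. The $\Upsilon$ case is identical, using $\Upsilon^{\univ}\simeq\Upsilon^{\anor}$.
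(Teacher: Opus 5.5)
Your overall strategy matches the paper's: replace $\Psi_f^{\univ}$ by $\Psi_f^{\anor}=i^*j_*(-\otimes f_\eta^*\uscr)$ and argue only with the six operations. However, \cite{bang-2024} cannot be transcribed verbatim. It uses the derivator formula for $\Psi_f^{\anor}$, and the paper points out that this formula is not available for $\dn$. What must be supplied instead is a compatibility of the Braden transformation with $-\otimes f^*\uscr$. The paper proves this through the diagrams (C0)--(C3), the projection formula and $f\circ e^{\pm}=f^0\circ\pi^{\pm}$; your ``standard compatibility of units and counits'' only gestures at it. A smaller slip: the lower arrow needs $(\pi^+)_!\Psi\to\Psi(\pi^+)_!$, not the reverse.

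The genuine gap is Step 2, which you yourself flag as the main obstacle.

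\textbf{The contraction principle is misstated.} On monodromic objects one has $(\pi^+)_!\simeq(s^+)^!$ and $(\pi^-)_*\simeq(s^-)^*$. So hyperbolic localization is $(s^+)^!(e^+)^*\simeq(s^-)^*(e^-)^!$. Your $(s^+)^*(e^+)^*$ is just restriction to $X^0$. For example, take $\mathbb{A}^2$ with weights $(1,-1)$ and $M=\mathds{1}$: hyperbolic localization gives the compactly supported cohomology of a line, whereas your formula gives $\mathds{1}$.

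\textbf{Even after correcting it, nothing can be combined.} The exchange maps of $\Psi$ along the closed immersions $s^{\pm},e^{\pm}$ and along $\pi^{\pm}$ are not invertible. Moreover, $\Psi(e^-)^!\to(e^-)^!\Psi$ and $(s^-)^*\Psi\to\Psi(s^-)^*$ point in opposite directions. So ``combining the two'' produces nothing, and the ellipsis hides the whole proof.

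\textbf{The missing idea.} You never need $\Psi$ itself to commute with these functors. Each constituent of $i^*j_*(-\otimes f_\eta^*\uscr)$ commutes with hyperbolic localization by a genuine isomorphism on one side:
\begin{itemize}
\item $j_*$ commutes with $(\pi^-)_*(e^-)^!$ by base change;
\item $i^*$ commutes with $(\pi^+)_!(e^+)^*$ by base change;
\item $-\otimes f^*\uscr$ commutes with $(\pi^+)_!(e^+)^*$ by the projection formula, since $f\circ e^+=f^0\circ\pi^+$.
\end{itemize}
Once the relevant squares are known to commute (for $\chi_f=i^*j_*$ as in \cite{bang-2024}, and for the tensor factor via (C0)--(C3)), you apply Braden's isomorphism to the equivariant objects $M\otimes f_\eta^*\uscr$, $j_*(M\otimes f_\eta^*\uscr)$ and $\Psi_f(M)$ to pass between the $+$ and $-$ sides. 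The remaining arrows are then isomorphisms because the other three sides of each square are. Your own Step 1 ingredients therefore already give Step 2, provided you track which base-change maps are invertible; no contraction argument is needed. This is exactly why the paper, following \cite{richarz-2018} and \cite{bang-2024} as recalled in its introduction, insists on a six-operations expression for the nearby functor.
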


\begin{proof}
Under the isomorphism $\Psi_f^{\anor} \simeq \Psi_f^{\univ}$, the method in \cite{bang-2024} applies to this situation. The only subtle point is that in \cite{bang-2024}, the author uses the description of nearby functors in terms of algebraic derivators $\Psi_f^{\anor} = (p_{\Delta \times \mathbb{N}^{\times}})_{\#}i^*j_*(\theta_f)_*(\theta_f)^*(p_{\Delta \times \mathbb{N}^{\times}})^*$. However, it is unclear that whether one can define $\dn(-)$ for diagrams of varieties so we propose a different solution to this. This one also proves the compatibility of Braden transformations with tensor products, that, to our best knowledge, is not yet available in the literature. The method in \cite{bang-2024} applies to the standard specialization system $\chi_f = i^*j_*$ yielding a commutative diagram
\begin{equation*}
        \begin{tikzcd}[sep=large]
            (\pi^-_{\sigma})_*(e^-_{\sigma})^!\chi_f(M \otimes f_{\eta}^*(\uscr)) \arrow[d] & \chi_{f^0}(\pi^-_{\eta})_*(e^-_{\eta})^!(M \otimes f_{\eta}^*(\uscr)) \arrow[d] \arrow[l] \\ 
            (\pi^+_{\sigma})_!(e^+_{\sigma})^*\chi_f(M \otimes f_{\eta}^*(\uscr)) \arrow[r]  & \chi_{f^0} (\pi_{\eta}^+)_!(e^+_{\eta})^*(M \otimes f_{\eta}^*(\uscr))
        \end{tikzcd}
    \end{equation*}
so it suffices to show that Braden's transformations are compatible with tensor products in the sense that there is a commutative diagram of the form
\begin{equation*}
    \begin{tikzcd}[sep=large]
          (\pi_{\eta}^-)_*(e_{\eta}^-)^!(M)\otimes (f^0_{\eta})^*(\uscr) \arrow[d] \arrow[r] & (\pi^-_{\eta})_*(e^-_{\eta})^!(M \otimes f_{\eta}^*(\uscr)) \arrow[d]  \\ 
          (\pi^+_{\eta})_!(e^+_{\eta})^*(M)\otimes (f^0_{\eta})^*(\uscr)\arrow[r] & (\pi^+_{\eta})_!(e^+_{\eta})^*(M \otimes f^*_{\eta}(\uscr)).
    \end{tikzcd}
\end{equation*}
By \cite[Proposition 2.3.41]{ayoub-thesis-1}\footnote{In \cite{ayoub-thesis-1}, the author assumes varieties to be quasi-projective and use the factorization $f = h \circ i$ with $h$ smooth and $i$ closed; with varieties, we use the factorization $f = p \circ j$ with $p$ proper and $j$ open and the proof remains the same.}, there exists a commutative diagram 
\begin{equation*}
    \begin{tikzcd}[sep=large]
           (t^-)^*(e^-)^!(M) \otimes (t^-)^*(e^-)^*(C) \arrow[r,equal] \arrow[rrd,phantom,"(\text{C0})"] \arrow[d,color=blue] &  (t^-)^*((e^-)^!(M) \otimes (e^-)^*(C)) \arrow[r] & (s^-)^*(e^-)^!(M \otimes C) \arrow[d,color=blue] \\ 
            (t^+)^!(e^+)^*(M) \otimes (t^+)^*(e^+)^*(C) \arrow[r] &  (t^+)^!((e^-)^*(M) \otimes (e^-)^*(C)) \arrow[r,equal] & (t^+)^!(e^+)^*(M \otimes C).
    \end{tikzcd}
\end{equation*}
Apply $u^*= u^!$ to $(\text{C0})$ we obtain 
\begin{equation*}
    \begin{tikzcd}[sep=large]
           (s^-)^*(e^-)^!(M) \otimes (s^-)^*(e^-)^*(C) \arrow[r,equal] \arrow[rd,phantom,"(\text{C1})"] \arrow[d,color=blue]& (s^-)^*(e^-)^!(M \otimes C) \arrow[d,color=blue] \\ 
            (s^+)^!(e^+)^*(M) \otimes (s^+)^*(e^+)^*(C) \arrow[r] &   (s^+)^!(e^+)^*(M \otimes C).
    \end{tikzcd}
\end{equation*}
Moreover, the proof of \cite[Corollaire 2.1.91]{ayoub-thesis-1} shows that there exists a commutative diagram 
\begin{equation*}
    \begin{tikzcd}[sep=large]
          A \otimes B \arrow[r] \arrow[d] \arrow[rd,phantom,"(\text{C2})"] & (s^-)_*(s^-)^*(A \otimes B) \arrow[d,equal] \\ 
          (s^-)_*(s^-)^*(A) \otimes (s^-)_*(s^-)^*(B) \arrow[r,"\simeq"] & (s^-)_*(s^-)^*(A \otimes B),
    \end{tikzcd}
\end{equation*}
where $\id \longrightarrow (s^-)_*(s^-)^*$ indicates the unit morphism. We apply $(\pi^-)_*$ to (C2) and obtain 
\begin{equation*}
    \begin{tikzcd}[sep=large]
         (\pi^-)_*(A \otimes B) \arrow[r] \arrow[d]  & (\pi^-)_*(s^-)_*(s^-)^*(A \otimes B) \arrow[d,equal] \\ 
          (\pi^-)_*(s^-)_*(s^-)^*(A) \otimes (\pi^-)_*(s^-)_*(s^-)^*(B) \arrow[r] & (\pi^-)_*(s^-)_*(s^-)^*(A \otimes B)
    \end{tikzcd}
\end{equation*}
Now note that $\pi^- \circ s^- = \id$ and there exists a natural transformation $(\pi^-)_*(A) \otimes (\pi^-)_*(B) \longrightarrow (\pi^-)_*(A \otimes B)$. Therefore, we get  commutative diagram 
\begin{equation*}
    \begin{tikzcd}[sep=large]
        (\pi^-)_*(A) \otimes (\pi^-)_*(B) \arrow[r] \arrow[d,color=blue] \arrow[rd,phantom,"\text{(C3)}"] & (\pi^-)_*(A \otimes B) \arrow[d,color=blue] \\
          (s^-)^*(A) \otimes (s^-)^*(B)  \arrow[r,equal] & (s^-)^*(A \otimes B).
    \end{tikzcd}
\end{equation*} In particular, we obtain
\begin{equation*}
    \begin{tikzcd}[sep=large]
        (\pi^-)_* (e^-)^!(A) \otimes (\pi^-)_* (e^-)^*(C) \arrow[r] \arrow[rd,phantom,"\text{(C3)}"] \arrow[d,color=blue] & (\pi^-)_*( (e^-)^!(A) \otimes (e^-)^*(C)) \arrow[d,color=blue] \arrow[r] \arrow[dr,phantom,"\text{interchange law}"] & (\pi^-)_*(e^-)^!(A \otimes C) \arrow[d,color=blue] \\
          (s^-)^* (e^-)^!(A) \otimes (s^-)^* (e^-)^*(C) \arrow[r,equal] & (s^-)^*( (e^-)^!(A) \otimes  (e^-)^*(C)) \arrow[r,"\simeq"] & (s^-)^*(e^-)^!(A \otimes C).
    \end{tikzcd}
\end{equation*}
by making a substitution $A \longmapsto (e^-)^!(A)$ and $B \longmapsto (e^-)^*(C)$ and using the interchange law. By patching this one with the previous one, we get 
\begin{equation*}
    \begin{tikzcd}[sep=large]
        (\pi^-)_* (e^-)^!(A) \otimes (\pi^-)_* (e^-)^*(C) \arrow[r] \arrow[d,color=blue]  & (\pi^-)_*(e^-)^!(A \otimes C) \arrow[d,color=blue] \\
          (s^-)^* (e^-)^!(A) \otimes (s^-)^* (e^-)^*(C) \arrow[d,color=blue] \arrow[dr,phantom,"\text{(C1)}"] \arrow[r] & (s^-)^*(e^-)^!(A \otimes C) \arrow[d,color=blue] \\ 
           (s^+)^!(e^+)^*(A)\otimes (s^-)^* (e^-)^*(C) \arrow[r]   & (s^+)^!(e^+)^*(A \otimes C) .
    \end{tikzcd}
\end{equation*}
By setting $C = f^*(\uscr)$ and using $f \circ e^- = f^0 \circ \pi^-$ (since the $\gm_{m,k}$-action on $\mathbb{A}_k^1$ is trivial), we obtain 
\begin{equation*}
    \begin{tikzcd}[sep=large]
        (\pi^-)_* (e^-)^!(A) \otimes (\pi^-)_* (\pi^-)^*(f^0)^*(\uscr) \arrow[r] \arrow[d,color=blue]  & (\pi^-)_*(e^-)^!(A \otimes f^*(\uscr)) \arrow[d,color=blue] \\
          (s^+)^!(e^+)^*(A)\otimes (f^0)^*(\uscr)\arrow[r]  & (s^+)^!(e^+)^*(A \otimes f^*(\uscr)).
    \end{tikzcd}
\end{equation*}
By the projection formula, we get 
\begin{equation*}
    \begin{tikzcd}[sep=large]
    (\pi^-)_*(e^-)^!(A) \otimes (f^0)^*(\uscr) \arrow[d,"\text{unit}",swap,color=blue] \arrow[dr] & \\ 
        (\pi^-)_* (e^-)^!(A) \otimes (\pi^-)_* (\pi^-)^*(f^0)^*(\uscr) \arrow[r] \arrow[d,color=blue]  & (\pi^-)_*(e^-)^!(A \otimes f^*(\uscr)) \arrow[d,color=blue] \\
          (s^+)^!(e^+)^*(A)\otimes (f^0)^*(\uscr)\arrow[r] \arrow[dr,phantom,"\text{interchange law}"] \arrow[d,color=blue] & (s^+)^!(e^+)^*(A \otimes f^*(\uscr)) \arrow[d,color=blue]  \\ 
           (\pi^+)_! (e^+)^*(A) \otimes (f^0)^*(\uscr) \arrow[r]  \arrow[rd,"\text{projection formula}",swap]  & (\pi^+)_!(e^+)^*(A \otimes f^*(\uscr)) \arrow[d,equal] \\ 
           & (\pi^+)_!((e^+)^*(A) \otimes (\pi^+)^*(f^0)^*(\uscr))
    \end{tikzcd}
\end{equation*}
as desired.
\end{proof}

\subsection{The Integral Identity} As in \cite{bang-2024}, let us deduce the motivic integral identity as a consequence of the commutativity between motivic nearby functors and Braden's transformations. The following lemma is easy.
\begin{lem}
Let $Y$ be a $k$-variety. Let $\gm_{m,k}$ act on $X=\mathbb{A}^{d_1} \times_k \mathbb{A}^{d_2} \times_k Y$ by (on points) 
\begin{equation*}
            \lambda(\mathbf{a}_1,\mathbf{a}_2,x) = (\lambda^{>0}\mathbf{a}_1,\lambda^{<0}\mathbf{a}_2,x)  \ \forall \ \lambda \neq 0
        \end{equation*}
then one has that $X^+ = \mathbb{A}_k^{d_1} \times_k Y, X^- = \mathbb{A}_k^{d_2} \times_k Y, X^0 = Y$ and $\pi^{\pm}$ are projections and $e^{\pm}$ are closed immersions given by zero sections. 
\end{lem}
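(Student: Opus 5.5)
The plan is to compute the three spaces directly from their functor-of-points definitions, using that the action is diagonal with weights of fixed sign on each factor. Write $\lambda^{>0}\mathbf{a}_1 = (\lambda^{w_1}a_{1,1},\dots,\lambda^{w_{d_1}}a_{1,d_1})$ with all $w_i>0$ and $\lambda^{<0}\mathbf{a}_2 = (\lambda^{-v_1}a_{2,1},\dots)$ with all $v_j>0$. Since $X = \mathbb{A}^{d_1}\times\mathbb{A}^{d_2}\times Y$ is a product of $\gm_m$-varieties, and the functors $(-)^0,(-)^{\pm}$ are defined by equivariant maps out of $T$, $(\mathbb{A}^1_T)^{\pm}$, they commute with fibre products. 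Hence one reduces to computing them for $\mathbb{A}^1$ with a single weight $w\neq 0$ and for $Y$ with the trivial action.

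For the trivial action on $Y$, any equivariant map $(\mathbb{A}^1_T)^{\pm}\to Y_T$ is constant along the $\gm_m$-orbits. Since $\gm_{m,T}$ is schematically dense in $\mathbb{A}^1_T$, such a map factors through the projection to $T$. This gives $Y^0=Y^+=Y^-=Y$, with $\pi^{\pm}=e^{\pm}=\id$.

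For $\mathbb{A}^1$ with weight $w$, an equivariant map $\mathbb{A}^1_T\to \mathbb{A}^1_T$ (with $\mathbb{A}^1_T$ carrying the standard weight-one action for the $+$ case) is a polynomial $p(t)\in \mathcal{O}(T)[t]$ satisfying $p(\lambda t)=\lambda^w p(t)$. So $p=c\,t^w$ if $w>0$, and $p=0$ if $w<0$, since negative powers are not allowed. For the repeller one uses the inverse action, and the roles swap. Fixed points are $\{0\}$ when $w\neq 0$, because $\lambda^w a = a$ for all $\lambda$ forces $a=0$. Thus $(\mathbb{A}^1_{w>0})^+=\mathbb{A}^1$, $(\mathbb{A}^1_{w>0})^-=0$, and symmetrically for $w<0$.

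Taking products gives $X^0=Y$, $X^+=\mathbb{A}^{d_1}\times Y$ and $X^-=\mathbb{A}^{d_2}\times Y$.

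Finally, the maps are identified as follows.
\begin{itemize}
\item $e^{\pm}$ is evaluation at $1\in\gm_m$, so it sends $c$ to $c\cdot 1^w=c$. On $X^+$ this is the inclusion $(\mathbb{A}^{d_1}\times Y)\times\{0\}\hookrightarrow X$, a closed immersion given by the zero section of the $\mathbb{A}^{d_2}$ factor; $e^-$ is analogous.
\item $\pi^{\pm}$ is evaluation at $0$, so it sends $c\,t^w$ to $0$ on the $\mathbb{A}$-factors. This is the projection to $Y$.
\end{itemize}

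The only point needing care is the density argument and the polynomial computation over an arbitrary base $T$, not just over points. Both are routine because $\mathcal{O}(\mathbb{A}^1_T)=\mathcal{O}(T)[t]$ is graded and equivariance means homogeneity of degree $w$.
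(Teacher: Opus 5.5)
Your proposal is correct and takes essentially the paper's route: a direct functor-of-points computation. There, an equivariant map $(\mathbb{A}^1_T)^{\pm}\to X_T$ is analysed componentwise, which forces the opposite-sign-weight coordinates to vanish, the same-sign ones to be $c_i t^{|w_i|}$, and the $Y$-component to be constant; $e^{\pm}$ and $\pi^{\pm}$ are then read off as evaluation at $1$ and at $0$. You organise this as compatibility of $(-)^0,(-)^{\pm}$ with products, and you supply the justifications the paper leaves implicit: homogeneity in $\mathcal{O}(T)[\lambda^{\pm1},t]$ over an arbitrary base, and schematic density of $\gm_{m,T}$ in $\mathbb{A}^1_T$, together with separatedness of $Y$, for the constancy of the $Y$-component.
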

\begin{proof}
        Let us compute $X^+$ for instance. The other cases are similar. On points, one has that $X^+(T)$ is the set of morphisms $f = (f_1,f_2,f_3) \colon \mathbb{A}^1_k  \longrightarrow \mathbb{A}^{d_1}_k \times_k \mathbb{A}^{d_2}_k \times_k X$ such that $f(\lambda x) = (f_1(\lambda x),f_2(\lambda x),f_3(\lambda x)) = (\lambda^{>0} f_1(x), \lambda^{<0} f_2(x), f_3(x))$. Thus, $f_2(\lambda x) =\lambda^{<0}f_2(x) $, which does not happen for algebraic functions unless $f_2=0$. Meanwhile, $f_3$ must be constant and $f_1$ is of the form $(c_1 a_1^{\bullet},...,c_{d_1}a_{d_2}^{\bullet})$ with those $\bullet$ fixed. The map $X^+ \longrightarrow \mathbb{A}_k^{d_1} \times_k \xfrak$ given by $f \longmapsto (c_1,....,c_{d_1},f_3)$ determines the isomorphism of functors of points. 
    \end{proof}
\begin{theorem}
Let $X,Y$ be as in the previous lemma. Let $Y$ be embedded in $\mathbb{A}_k^{d_1} \times_k \mathbb{A}_k^{d_2} \times_k Y$ by zero sections, i.e. 
\begin{equation*}
    Y \longhookrightarrow \mathbb{A}_k^{d_1} \times_S \mathbb{A}_k^{d_2} \times_k Y \ \ \ \ \      y  \longmapsto (0,0,y).
\end{equation*}
Let $f \colon \mathbb{A}_k^{d_1} \times_k \mathbb{A}_k^{d_2} \times_k Y \longrightarrow \mathbb{A}_k^1$ be a $\mathbb{G}_{m,k}$-equivariant morphism, where $\mathbb{A}_k^1$ is endowed with the trivial $\mathbb{G}_{m,k}$-action, i.e., on points,
\begin{equation*}
    f(\mathbf{a}_1,\mathbf{a_2},x) = f(\lambda^{>0}\mathbf{a}_1,\lambda^{<0}\mathbf{a}_2,x).
\end{equation*} 
Then there is an isomorphism of motives
 \begin{equation*}
        (\pi^+_{\sigma})_!(e^+_{\sigma})^*\Psi_f(\mathds{1}_{X_{\eta}}) = \int_Y\big(\Psi_f(\mathds{1}_{X_{\eta}}) \big)_{\mid \mathbb{A}^{d_1}_k \times Y} \simeq \mathds{1}_{X_{\sigma}}(d_1)[2d_1] \otimes  \Psi_{f_{\mid Y}}(\mathds{1}_{Y_{\eta}})
  \end{equation*}
\end{theorem}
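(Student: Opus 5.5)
We will derive the identity from the preceding proposition (commutation of $\Psi_f^{\univ}\simeq\Psi_f^{\anor}$ with Braden transformations), applied to $M=\mathds{1}_{X_\eta}$, together with the explicit description of $X^0,X^\pm$ in the previous lemma.

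\emph{Hypotheses.} The action on $X=\mathbb{A}^{d_1}_k\times_k\mathbb{A}^{d_2}_k\times_k Y$ is linear on the affine factors and trivial on $Y$, so it is $\et$-locally linearizable: take Zariski-affine covers of $Y$ and multiply by $\mathbb{A}^{d_1}\times\mathbb{A}^{d_2}$. The motive $\mathds{1}_{X_\eta}$ is $\gm_m$-equivariant, since $a^*\mathds{1}\simeq\mathds{1}\simeq p^*\mathds{1}$; here $X_\eta$ is $\gm_m$-stable because $f$ is invariant. Since $f$ is $\gm_m$-invariant, $f^0=f_{\mid Y}$. The lemma also gives $(X^+)_\eta=(X_\eta)^+$, so $(e^+_\eta)^*\mathds{1}_{X_\eta}=\mathds{1}_{X^+_\eta}$.

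\emph{Main chain.} By the bottom isomorphism of the proposition,
\begin{equation*}
(\pi^+_\sigma)_!(e^+_\sigma)^*\Psi_f(\mathds{1}_{X_\eta})\simeq \Psi_{f^0}(\pi^+_\eta)_!(e^+_\eta)^*\mathds{1}_{X_\eta}=\Psi_{f_{\mid Y}}\big((\pi^+_\eta)_!\mathds{1}_{\mathbb{A}^{d_1}\times Y_\eta}\big).
\end{equation*}
The left-hand side is by definition the integral along the fibres of $\mathbb{A}^{d_1}\times Y\to Y$ of the restriction of $\Psi_f(\mathds{1})$, which gives the first equality in the statement. For the right-hand side, $\pi^+_\eta$ is the projection $\mathbb{A}^{d_1}\times Y_\eta\to Y_\eta$, and homotopy invariance plus purity in $\dn$ (a coefficient system via the Nori realization) give
\begin{equation*}
(\pi^+_\eta)_!\mathds{1}\simeq\mathds{1}_{Y_\eta}(-d_1)[-2d_1].
\end{equation*}
The Tate twist should be read with the paper's normalization; the statement writes $(d_1)[2d_1]$, and I will match conventions (possibly via the dual $\pi_*$-side, using the top row and $(e^-)^!$ instead) when writing this out. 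Finally, $\Psi$ commutes with Tate twists and shifts, because it is built from $i^*j_*$ and $-\otimes f_\eta^*\uscr$; equivalently, use the Künneth compatibility in Proposition \ref{prop: functoriality of nearby cycles}(4),(5). This yields $\mathds{1}(\pm d_1)[\pm2d_1]\otimes\Psi_{f_{\mid Y}}(\mathds{1}_{Y_\eta})$.

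\emph{Main obstacle.} The main difficulty is not formal. It is making sure the proposition applies to $\Psi^{\univ}$ on $\dn$ rather than only on $\daet$. This is handled by Theorem \ref{thm: nearby functors on Nori motives coincide} together with the proposition's proof, which uses only six operations and tensor compatibility. A secondary point is keeping track of the sign and twist conventions in the purity isomorphism.
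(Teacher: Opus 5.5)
Your argument is correct and follows the paper's route. The paper simply defers to \cite[Theorem 4.3]{bang-2024}, which likewise applies the Braden--nearby-cycles commutation of the preceding proposition to the equivariant motive $M=\mathds{1}_{X_\eta}$, uses the lemma to identify $X^+_\eta=\mathbb{A}^{d_1}_k\times_k Y_\eta$ and $f^0=f_{\mid Y}$, and concludes by homotopy invariance together with the compatibility of $\Psi$ with twists and shifts.

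One correction to your hedge: switching to the top row does not change the twist. Purity gives $(e^-_\eta)^!\mathds{1}\simeq\mathds{1}(-d_1)[-2d_1]$ and homotopy invariance gives $(\pi^-_\eta)_*\mathds{1}\simeq\mathds{1}$, so both routes yield $\mathds{1}_{Y_\sigma}(-d_1)[-2d_1]\otimes\Psi_{f_{\mid Y}}(\mathds{1}_{Y_\eta})$. The $(d_1)[2d_1]$ and the $\mathds{1}_{X_\sigma}$ in the statement are therefore a normalization or typo issue in the statement itself, not a defect in your proof.
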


\begin{proof}
The proof remains the same as the proof of \cite[Theorem 4.3]{bang-2024} (see also, \cite[Theorem 3.3.3]{bang-thesis}).
\end{proof}

\section{Appendix: Beilinson's theorem for perverse sheaves of geometric origin}

In \cite{beilinson-1987-2}, Beilinson proves that the derived category of perverse sheaves is equivalent to the constructible derived category. One may ask a natural question that can we prove the same theorem for perverse sheaves of geometric origin. Such theorem is implicitly used throughout this paper and therefore we write this appendix with the purpose of providing a complete proof for that theorem. Let us meticulously follow the proof written in \cite[Chapter 4]{achar-book}. 
\subsection{On Beilinson's realization functor} Let $\tcal$ be a triangulated category equipped with a bounded $t$-structure whose heart is $\acal$. In case $\tcal$ admits a \textit{filtered version}, then in \cite{beilinson-1987-2}, Beilinson constructs a realization functor 
\begin{equation*}
    \operatorname{real} \colon \dcat^b(\acal) \longrightarrow \tcal.
\end{equation*}
In practice, we do not have to care about what it means to have a filtered version because all full triangulated subcategory of the bounded derived category of an abelian category satisfies such condition. In particular, there are realization functors
\begin{align*}
    \operatorname{real} \colon \dcat^b(\perv(X)) & \longrightarrow \dcatct^b(X) \\ 
    \operatorname{real} \colon \dcat^b(\perv_{\geo}(X)) & \longrightarrow \dcat^b_{\geo}(X).
\end{align*}
A necessary and sufficient condition for $\operatorname{real}$ to be an equivalence is that any morphism $X \longrightarrow Y[n]$ (with $X,Y \in \acal$, $n >0$) is effaceable, namely, there exists an injective morphism $Y \longhookrightarrow I$ such that $X \longrightarrow Y[n] \longrightarrow I[n]$ is zero. For a proof, the reader can consult \cite[Lemma 1.4]{beilinson-1987-2}\cite[Corollary A.7.19]{achar-book}.
\subsection{Perverse sheaves of geometric origin and how to glue them} Let $k \subset \mathbb{C}$ be a subfield and $X/k$ be a variety. Let $X^{\an}$ be its $\mathbb{C}$-points equipped with the complex topology. Let $\dcatct^b(X)$ be the bounded category with algebraically constructible cohomologies. Let $\dcat^b_{\geo}(X) \subset \dcatct^b(X)$ be a smallest sub-$\infty$-category spanned by the thick triangulated subcategory (at the level of homotopy category) containing all complexes of the form $p_*(\mathds{1}_Z)$ with $p \colon Z \longrightarrow X$ a proper morphism. 
\begin{prop} \label{prop: perversity of operations on geometric perverse sheaves}
    The collection $\dcat_{\geo}^b(X)$ is stable under six operations. Moreover, the six operations satisfy the same perversity as they do for ordinary perverse sheaves. 
\end{prop}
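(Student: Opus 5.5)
The plan is to deduce the statement from the corresponding facts for \'etale motives, transported along the Betti realization. The key input is that $\betti^*$ commutes with the six operations (\cite{ayoub-2010}) and that $\betti^*(p_*\mathds{1}_Z)=p_*\mathbb{Q}_{Z^{\an}}$. Then $\dcat^b_{\geo}(X)$ is exactly the thick subcategory generated by the essential image of $\betti^*_X$ on the geometric \'etale motives $p_*\mathds{1}_Z(n)$. The twists $(n)$ are harmless: for $\mathbb{Q}$-coefficients the Tate twist is trivial analytically, or one notes that $\mathds{1}(-1)[-2]$ is a summand of $q_*\mathbb{Q}_{\mathbb{P}^1_X}$. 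So the argument has three steps.

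\textbf{Step 1: stability of $\dcat^b_{\geo}$ under the six operations.} Each of the six functors is exact, so the preimage of $\dcat^b_{\geo}$ under it is a thick subcategory. It therefore suffices to check the operations on generators $p_*\mathbb{Q}_Z$. For $g_*$ and $g_!$ with $g\colon X\to Y$, I would use Nagata compactification and resolution of singularities to reduce to $g$ proper or an open immersion. In the proper case $g_*p_*\mathbb{Q}_Z=(gp)_*\mathbb{Q}_Z$ is again a generator. In the open case, $j_*p_*\mathbb{Q}_Z$ is handled by choosing a compactification $\bar Z$ of $Z$ over $Y$ with normal crossing boundary; then $j_*$ of the constant sheaf is built from constant sheaves on boundary strata, via purity and Mayer--Vietoris.

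Rather than redo these reductions, a cleaner route is to invoke the known result for $\daetct$ (\cite{ayoub-2014}, Ayoub's stability of constructible motives) and note that $\betti^*$ intertwines all six functors. Then $\betti^*(\daetct(X))$ generates $\dcat^b_{\geo}(X)$ as a thick subcategory, and for any operation $F$ we have $F(\betti^*M)\simeq\betti^*(FM)$, which lies in $\dcat^b_{\geo}$. For $\otimes$ and $\underline{\Hom}$ the same argument works in each variable. Duality $\mathbb{D}$ is handled identically, using $\mathbb{D}=\underline{\Hom}(-,\pi^!\mathds{1})$. I would verify carefully the claim that the thick closure of $\betti^*(\daetct)$ equals $\dcat^b_{\geo}$. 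This needs $\daetct(X)$ to be generated by the $p_*\mathds{1}_Z(n)$, which is Proposition \ref{prop: compactness of Ayoub's functors}.

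\textbf{Step 2: perverse $t$-structure and perversity.} Next I need that the perverse truncations of a geometric complex are again geometric, so that $\dcat^b_{\geo}$ inherits a $t$-structure with heart $\perv_{\geo}$. Once that holds, the perversity statements (e.g.\ $f^*$ right $t$-exact up to shift for quasi-finite $f$, $j_*$ and $j_!$ $t$-exact for affine open $j$, $i^*j_*$ of amplitude $[-1,0]$) are inherited verbatim. This is because the inclusion $\dcat^b_{\geo}\subset\dcatct^b$ is fully faithful and commutes with the operations and the truncations, and perversity is a condition on perverse cohomology sheaves.

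\textbf{Step 3: the main obstacle, stability under perverse truncation.} I would take this from the Nori side: by \cite{florian+morel-2019}, $\rat_X\circ\phnor^0_{\univ}=\phnor^0\circ\betti^*_X$, so $\phnor^i$ of $\betti^*M$ lies in the image of $\mscr\perv(X)$. Iterating, the image of $\rat_X$ is closed under subquotients and extensions only as far as the universal property grants. The safe statement is that $\phnor^i(\betti^*M)=\rat(\phnor^i_{\univ}M)$ is of geometric origin, provided $\rat(\mscr\perv(X))\subset\perv_{\geo}(X)$. The latter I would prove by induction on dimension using Beilinson's gluing: write an object as glued from a lisse piece on a smooth dense open, where it is a subquotient of some $R^ip_*\mathbb{Q}$, together with vanishing cycles, which are geometric by Example \ref{ex: examples of sp}. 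This induction is where I expect the real difficulty, since subquotients of geometric perverse sheaves must be shown geometric. I would attempt it via the semisimplicity of the Hodge-theoretic (pure) parts and the weight filtration.
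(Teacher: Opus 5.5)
Your Step 1 is correct, and it takes a different route from the paper's. The paper checks each operation by hand on the generators $p_*\mathds{1}_Z$:
proper base change for $f^*$;
$\mathbb{D}_Xp_*\simeq p_*\mathbb{D}_Z$ together with $f^!=\mathbb{D}f^*\mathbb{D}$;
Nagata compactification with $j_!\mathds{1}=\operatorname{Fib}(\mathds{1}\to i_*\mathds{1})$ for $f_!$, then duality for $f_*$;
K\"unneth for $\otimes$;
and $\underline{\Hom}(p_*\mathds{1},q_*\mathds{1})\simeq p_*p^!q_*\mathds{1}$.

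Your transfer argument (the preimage of a thick subcategory under an exact functor is thick, and $F\circ\betti^*\simeq\betti^*\circ F$ on constructible motives) is uniform. It never has to compute $\mathbb{D}_Z\mathds{1}_Z$. The paper's step $p_*\mathbb{D}_Z(\mathds{1}_Z)=p_*\mathds{1}_Z$ is only correct, up to $(d)[2d]$, when $Z$ is smooth of dimension $d$, so the elementary route really needs a preliminary reduction to smooth sources. The price is that you import Ayoub's stability of $\daetct$ under the six operations and the compatibility of $\betti^*$ with all of them. These are far deeper than anything the direct verification uses.

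The gap is in Steps 2--3. The paper's proof never touches perverse truncations. Its perversity clause is inherited from $\dcatct^b(X)$, where, read as amplitude estimates for the ambient perverse $t$-structure, it is automatic. You set out to prove more: that $\dcat^b_{\geo}(X)$ carries its own perverse $t$-structure with heart $\perv_{\geo}(X)$. The paper does use this later (e.g.\ when it calls $\perv_{\geo}(X)$ a Serre subcategory), but your Step 3 does not establish it.

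\begin{itemize}
\item The passage through $\mscr\perv(X)$ adds nothing. Since $\rat_X\circ\phnor^0_{\univ}=\phnor^0\circ\betti^*_X$, saying $\rat(\phnor^i_{\univ}M)$ is geometric just restates that $\phnor^i(\betti^*M)$ is. The inclusion $\rat(\mscr\perv(X))\subset\perv_{\geo}(X)$ already requires closure under subquotients, which is exactly the point you leave open.
\item Even knowing $\phnor^i(\betti^*M)$ is geometric for every motive $M$ would not suffice, because $\betti^*$ is not full. Objects of the thick closure are cones of non-motivic maps, so you still need $\perv_{\geo}(X)$ to be closed under kernels, cokernels and extensions.
\item The gluing induction and the weight-filtration idea both presuppose that same closure: the gluing data and the pieces $\mathrm{Gr}^W_k$ must already be known to be geometric.
\end{itemize}

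The missing idea is to apply the decomposition theorem only to generators and then argue with Jordan--H\"older constituents.

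First, reduce to smooth sources. Take a resolution $\pi\colon\tilde Z\to Z$ that is an isomorphism off $W$, set $E=\pi^{-1}(W)$ and let $\rho\colon E\to Z$ be the induced map. The abstract blow-up triangles $\mathbb{Q}_Z\to\pi_*\mathbb{Q}_{\tilde Z}\oplus i_*\mathbb{Q}_W\to\rho_*\mathbb{Q}_E\xrightarrow{+1}$, with induction on $\dim Z$, show that $\dcat^b_{\geo}(X)$ is the thick closure of the $p_*\mathbb{Q}_Y$ with $Y$ smooth and $p$ proper.

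For these generators, $p_*\mathbb{Q}_Y\simeq\bigoplus_j\phnor^j(p_*\mathbb{Q}_Y)[-j]$ with each $\phnor^j(p_*\mathbb{Q}_Y)$ semisimple. Let $\mathcal{S}$ be the set of simple summands that occur; each lies in $\dcat^b_{\geo}(X)$ as a retract of a generator.

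Let $\mathcal{D}\subset\dcatct^b(X)$ be the full subcategory of complexes whose perverse cohomology sheaves have all their Jordan--H\"older constituents in $\mathcal{S}$.
\begin{itemize}
\item $\mathcal{D}$ is thick, by the long exact sequence of perverse cohomology, and it contains the generators. Hence $\dcat^b_{\geo}(X)\subset\mathcal{D}$.
\item Conversely, every object of $\mathcal{D}$ is an iterated extension of shifts of objects of $\mathcal{S}$. Hence $\mathcal{D}\subset\dcat^b_{\geo}(X)$.
\end{itemize}
Thus $\dcat^b_{\geo}(X)=\mathcal{D}$ is stable under ${}^p\tau_{\le n}$, and its heart is a Serre subcategory of $\perv(X)$. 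Your Step 2 then goes through with no need for Nori motives, gluing, or weights.
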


\begin{proof}
    Let $f \colon X \longrightarrow Y$ be a morphism of $k$-varieties. The stability of $f^*$ follow from the proper base change theorem. Now it is clear that the Verdier duality $\mathbb{D}_X$ induces an involution on $\dcat^b_{\geo}(X)$ since $\mathbb{D}_X p_*(\mathds{1}_Z)= p_* \mathbb{D}_Z(\mathds{1}_Z) = p_*(\mathds{1}_Z)$. Thus, $f^! = \mathbb{D}_X f^*\mathbb{D}_Y$ sends $\dcat^b_{\geo}(Y)$ to $\dcat^b_{\geo}(X)$. A similar argument proves that $f_*$ preserves complexes of geometric origin if and only if $f_!$ does. Regard the case $f_!$, if $f$ is proper, then it becomes trivial. In general, we have to prove that $(f \circ p)_!(\mathds{1}_Z)$ is of geometric origin. Using the Nagata compactification theorem and the case of $f$ being proper, it suffices to prove that $j_!(\mathds{1})$ is of geometric with $j$ an open immersion. By the localization sequence, we know that $j_!(\mathds{1}) = \operatorname{Fib}(\mathds{1} \longrightarrow i_*\mathds{1})$ with $i$ being the closed complement (endowed with the reduced structure) and we win. The stability of $\otimes$ follows from the Kunneth formula. The stability of $\underline{\Hom}$ follows from the for mula (with $p,q$ proper) $\underline{\Hom}(p_*(\mathds{1}),q_*(\mathds{1})) \simeq p_*\underline{\Hom}(\mathds{1},p^!q_*(\mathds{1})) \simeq p_*p^!q_*(\mathds{1})$ and the stabilities of functors $f_*,f_!$ showed before. 
\end{proof}

\begin{prop} \label{prop: restrictions of nearby functors on geometric complexes}
    Let $f \colon X \longrightarrow \mathbb{A}_k^1$ be a morphism of $k$-varieties. The collection $\dcat_{\geo}^b(X)$ is stable under the nearby functor $\Psi_f^{\an}$, the unipotent nearby functor $\Upsilon_f^{\an}$, the vanishing cycles functor $\Phi_f^{\an}$ and all these functors are perverse $t$-exact.
\end{prop}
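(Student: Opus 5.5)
The plan is to derive everything from the motivic side, using the compatibility $\betti^*\Psi_f^{\anor}\simeq \Psi_f^{\an}\betti^*$ of \cite{ayoub-2010} for stability and then treating perverse $t$-exactness separately.

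\emph{Stability under $\Psi_f^{\an}$.} Since $\dcat^b_{\geo}(X_\eta)$ is the thick subcategory generated by the $p_*\mathds{1}_Z$ with $p$ proper, and $\Psi_f^{\an}$ is exact, it suffices to treat such generators. Proper base change ($\beta_p$ is an isomorphism for $p$ proper) gives $\Psi_f^{\an}(p_{\eta,*}\mathds{1}_{Z_\eta})\simeq p_{\sigma,*}\Psi^{\an}_{f\circ p}(\mathds{1}_{Z_\eta})$. Proposition \ref{prop: perversity of operations on geometric perverse sheaves} shows that $p_{\sigma,*}$ preserves geometric complexes, so we are reduced to $M=\mathds{1}$. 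The unit $\mathds{1}_{Z_\eta}$ is the Betti realization of the motive $\mathds{1}$, so \cite{ayoub-2010} gives $\Psi^{\an}_{f\circ p}(\mathds{1})\simeq \betti^*\Psi^{\anor}_{f\circ p}(\mathds{1})$. By Example \ref{ex: examples of sp}(1), $\Psi^{\anor}_{f\circ p}(\mathds{1})$ lies in the subcategory generated by the $q_*\mathds{1}(n)$, and $\betti^*$ sends these to geometric complexes, which settles this case. Alternatively, one can run the argument of Proposition \ref{prop: logarithm of nearby cycles} directly: reduce by resolution of singularities and the Mayer--Vietoris sequence to a one-branch model $f=ug^e$, and conclude using Example \ref{ex: examples of sp}(2),(3).

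\emph{Stability under $\Upsilon_f^{\an}$ and $\Phi_f^{\an}$.} Since $\dcat^b_{\geo}(X_\sigma)$ is a thick (Karoubian) subcategory closed under retracts, and $\Upsilon^{\an}_f(M)$ is a direct summand of $\Psi^{\an}_f(M)$ (the generalized $1$-eigenspace of $T$), stability under $\Upsilon_f^{\an}$ follows. This step needs the Krull--Schmidt decomposition used to define $\Upsilon^{\an}_f$. The summand can be realized as the image of an idempotent given by a polynomial in $T$, so no extra structure is required. For $\Phi_f^{\an}$, I will use the triangle $i^*M\to\Psi_f^{\an}(j^*M)\to\Phi_f^{\an}(M)\to$ for $M\in\dcat^b_{\geo}(X)$. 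Both $i^*M$ and $\Psi_f^{\an}(j^*M)$ are geometric by Proposition \ref{prop: perversity of operations on geometric perverse sheaves} and the previous step, so $\Phi_f^{\an}(M)$ is geometric as a cone.

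\emph{Perverse $t$-exactness.} The perverse $t$-structure on $\dcat^b_{\geo}$ is by definition the restriction of the one on $\dcatct^b$; this restriction is legitimate because $\phnor^i$ preserves geometric origin, which one also needs for the heart $\perv_{\geo}$ to make sense. It follows that exactness of $\Psi^{\an}_f[-1]$, $\Upsilon^{\an}_f[-1]$ and $\Phi^{\an}_f[-1]$ on $\dcat^b_{\geo}$ is inherited from the classical statement (Gabber) on $\dcatct^b$; for $\Upsilon_f^{\an}[-1]$ this uses again that it is a direct summand of $\Psi_f^{\an}[-1]$. The main point to check carefully is that $\phnor^0$ of a geometric complex is geometric. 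I would prove this by induction on the dimension of the support, using the gluing triangles $j_!j^*\to\mathrm{id}\to i_*i^*$ together with stability under $j_{!*}$. The latter holds because $j_{!*}=\mathrm{Im}(\phnor^0 j_!\to\phnor^0 j_*)$, so the argument is somewhat circular; I would therefore instead cite the standard fact (Beilinson--Bernstein--Deligne--Gabber 6.2.4) that perverse truncations preserve sheaves of geometric origin, and argue only the stability under nearby cycles myself.
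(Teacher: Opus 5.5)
Your proposal is correct and follows essentially the same route as the paper. Stability under $\Psi_f^{\an}$ comes from Ayoub's compatibility $\betti^*\Psi_f^{\anor}\simeq \Psi_f^{\an}\betti^*$ on the generators $p_*\mathds{1}_Z$, $\Upsilon_f^{\an}$ is handled as a direct summand, $\Phi_f^{\an}$ as a cone, and perverse $t$-exactness is inherited from $\dcatct^b$. The point you isolate, that the perverse $t$-structure restricts to $\dcat^b_{\geo}$, is also taken for granted in the paper. However, BBD 6.2.4 does not supply it directly, because it builds closure under ${}^p\mathrm{H}^i$ into its definition of geometric origin. A cleaner justification is the decomposition theorem: for generators $p_*\mathbb{Q}_Z$ with $Z$ smooth, which suffice after resolution of singularities, it makes the ${}^p\mathrm{H}^i$ direct summands of shifted generators. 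Hence $\dcat^b_{\geo}$ is exactly the class of complexes whose perverse cohomology lies in the Serre subcategory generated by these summands, and that class is stable under perverse truncation.
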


\begin{proof}
     If these functors are well-restricted, then their perverse $t$-exactnesses follow from the fact that $\dcat^b_{\geo}(X) \longrightarrow \dcatct^b(X)$ is conservative and perverse $t$-exact. The functor $\Psi_f^{\an}$ preserves complexes of geometric origin since by \cite[Proposition 4.8]{ayoub-2010}, we can compute it via the Betti realization and the motivic nearby functor and conclude by the fact that \'etale motives are of geometric origin. The functor $\Upsilon_f^{\an}$ is a direct factor of $\Psi_f^{\an}$ and it is also well-defined. The same holds for $\Phi_f^{\an} = \operatorname{Cofib}(i^* \longrightarrow \Psi_f^{\an}j^*)$. 
\end{proof}

\begin{prop}
     Let $f \colon X \longrightarrow \mathbb{A}_k^1$ be a morphism of $k$-varieties. The collection $\dcat_{\geo}^b(X)$ is stable under the Beilinson's unipotent nearby functor $\Psi_f^{\an}$, the Beilinson's vanishing cycles functor $\Upsilon_f^{\an}$, the maximal extension functor $\Xi_f^{\an}$. In particular, one can glue perverse sheaves of geometric origin. 
\end{prop}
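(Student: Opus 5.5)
The plan is to express each of Beilinson's functors through constructions on $\dcat^b_{\geo}$ already shown to be stable, namely the six operations (proposition \ref{prop: perversity of operations on geometric perverse sheaves}) and the classical nearby functors (proposition \ref{prop: restrictions of nearby functors on geometric complexes}). Here, despite the labels in the statement, ``Beilinson's unipotent nearby functor'' should be read as $\Psi_f^{\un}$, ``Beilinson's vanishing cycles'' as $\Phi_f^{\un}$, and the maximal extension as $\Xi_f$.

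\textbf{Unipotent nearby functor.} Beilinson's functor is defined on $M \in \perv_{\geo}(X_\eta)$ by
\begin{equation*}
\Psi_f^{\un}(M) = \phnor^{-1}\bigl(i^*j_*(M \otimes f_\eta^*\jcal_m)\bigr)
\end{equation*}
for $m \gg 0$. The local system $\jcal_m \simeq \mathrm{Log}^{\vee}_{m-1}(m-1)$ is a symmetric power of $\kcal^{\an}$. Since $\kcal^{\an}$ is the Betti realization of the Kummer motive (lemma \ref{lem: monodromy of Kummer motive}), it is of geometric origin. Alternatively, one can see this from the localization sequence for $\mathbb{Q}$ on $\gm_m$ pulled back along $e_n$.

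Hence $M \otimes f_\eta^*\jcal_m$ lies in $\dcat^b_{\geo}$, and so does $i^*j_*$ of it. The remaining point is that perverse truncation preserves $\dcat^b_{\geo}$. I would argue this via the heart: $\perv_{\geo}(X)$ is by definition the heart of the perverse $t$-structure restricted to $\dcat^b_{\geo}(X)$. So one needs $\dcat^b_{\geo}$ to be stable under ${}^p\tau$, which I would prove by checking it on the generators $p_*\mathbb{Q}_Z$ using the decomposition theorem. Concretely, $p_*\mathbb{Q}_{\tilde Z}$ for a resolution $\tilde Z \to Z$ is a direct sum of shifted perverse sheaves, and the geometric subcategory is thick, hence closed under summands. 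I would then induct on $\dim Z$ using localization. This is consistent with the paper's convention that $\dcat^b_{\geo}$ carries a perverse $t$-structure. Shortcut: the earlier identification $\Upsilon_f^{\an}\simeq\log_f^{\an}$ (proposition \ref{prop: isomorphisms of sp on geometric complexes}) already gives geometricity directly.

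\textbf{Maximal extension and vanishing cycles.} Beilinson's maximal extension is
\begin{equation*}
\Xi_f(M) = \phnor^0\bigl(j_!(M\otimes f_\eta^*\jcal_m) \to j_*(M\otimes f_\eta^*\jcal_m)\bigr)
\end{equation*}
taken for $m\gg0$. More precisely it is the image, or cone, of the natural map, which is independent of $m$ by the stabilization argument of \cite[Chapter 4]{achar-book}. This is obtained by $j_!$, $j_*$, cones and perverse truncation, so it is geometric by the previous step. The unipotent vanishing cycles $\Phi_f^{\un}(F)$, for $F \in \perv_{\geo}(X)$, is the middle cohomology of Beilinson's complex
\begin{equation*}
j_!j^*F \to \Xi_f(j^*F)\oplus F \to j_*j^*F,
\end{equation*}
so it lies in $\perv_{\geo}(X_\sigma)$ since $\perv_{\geo}$ is an abelian subcategory of $\perv$ closed under kernels and cokernels.

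\textbf{Gluing.} Beilinson's gluing equivalence identifies $\perv(X)$ with quadruples $(M,V,u,v)$, with $M$ on $X_\eta$, $V$ on $X_\sigma$, and $u\colon \Psi^{\un}M\to V$, $v\colon V\to\Psi^{\un}M(-1)$ satisfying $vu=N$. The inverse functor sends a quadruple to the cohomology of a monad built from $\Xi_f(M)$ and $i_*V$. Both directions involve only the functors just treated plus $i_*$ and kernels and cokernels. Therefore the equivalence restricts to perverse sheaves of geometric origin on both sides.

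The main obstacle is the stability of $\dcat^b_{\geo}$ under perverse truncation. I would try the decomposition-theorem argument first, and fall back on realizing everything from perverse Nori motives via $\rat$, whose image consists of geometric perverse sheaves.
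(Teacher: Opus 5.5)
Your route is different from the paper's and can be made to work. However, the step you yourself flag as the main obstacle is not proved by what you wrote.

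\textbf{Comparison with the paper.} For $\Xi_f$ the paper never uses perverse truncation. It takes Beilinson's short exact sequence $0 \to i_*\Psi_f^{\un}(M) \to \Xi_f(M) \to j_*M \to 0$ in $\perv(X)$ and reads it as a distinguished triangle in $\dcatct^b(X)$. It then concludes from three facts:
\begin{itemize}
\item $j_*M$ is geometric by Proposition~\ref{prop: perversity of operations on geometric perverse sheaves};
\item $i_*\Psi_f^{\un}(M)$ is geometric by Proposition~\ref{prop: restrictions of nearby functors on geometric complexes}, since the unipotent part is a direct summand of $\Psi_f^{\an}$, which is geometric via Ayoub's comparison with the motivic nearby functor;
\item $\dcat^b_{\geo}(X)$ is closed under extensions.
\end{itemize}
You instead rebuild $\Psi_f^{\un}$, $\Xi_f$ and $\Phi_f^{\un}$ from Beilinson's explicit formulas. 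This avoids Ayoub's nearby-cycle comparison, but it makes stability of $\dcat^b_{\geo}$ under ${}^p\tau$ the load-bearing input. The paper tacitly assumes that stability: it treats $\perv_{\geo}(X)$ as a heart on $\dcat^b_{\geo}(X)$ and as a Serre subcategory of $\perv(X)$, and its gluing claim needs this. So a completed version of your argument would also supply a point the paper leaves unproved.

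\textbf{The gap.} Stability of a thick subcategory under ${}^p\tau$ cannot be ``checked on the generators''. Truncation is not exact, so the cone of a map between generators can acquire kernels and cokernels outside the subcategory. For example, in $\dcat^b$ of modules over $A=\mathbb{Q}[x]/(x^2)$, the thick closure of $A$ contains the cone of $x\colon A\to A$ but not its truncation $\mathbb{Q}[1]$. What rescues the geometric case is semisimplicity, and you need to spell this out in three steps.
\begin{itemize}
\item By resolution, the abstract blow-up triangle, and induction on $\dim Z$, $\dcat^b_{\geo}(X)$ is the thick closure of the $q_*\mathbb{Q}_W$ with $W$ smooth and $q$ proper.
\item By the decomposition theorem, each $q_*\mathbb{Q}_W$ is a finite sum of shifted \emph{semisimple} perverse sheaves. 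Hence $\dcat^b_{\geo}(X)$ is the thick closure of the set $S$ of their \emph{simple} summands.
\item The thick closure of such an $S$ equals the full subcategory of complexes all of whose perverse cohomology sheaves have composition factors in $S$. This subcategory is thick by the long exact sequence of perverse cohomology, it contains $S$, and each of its objects is an iterated extension of shifts of elements of $S$.
\end{itemize}
The last description is visibly stable under ${}^p\tau_{\leq n}$ and ${}^p\tau_{\geq n}$. It also shows that $\perv_{\geo}(X)$ is a Serre subcategory, which is exactly what your kernel/cokernel steps and your gluing step use.

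\textbf{Smaller points.}
\begin{itemize}
\item Your shortcut through Proposition~\ref{prop: isomorphisms of sp on geometric complexes} is circular in the paper's order. That proposition passes to derived functors via the geometric Beilinson equivalence (Theorem~\ref{thm: Beilinson's theorem for geometric complexes}). The appendix proves that theorem through Lemma~\ref{lem: full faithfulness of closed immersions on perverse sheaves of geometric origin} and Corollary~\ref{cor: perverse shv of geometric origin supported on a closed subscheme}, i.e.\ through the gluing part of the present statement. Cite Proposition~\ref{prop: restrictions of nearby functors on geometric complexes} instead.
\item Your fallback via $\rat$ does not bypass the obstacle. Perverse Nori motives realize to subquotients of ${}^p\mathrm{H}^0$ of Betti realizations, and showing these are geometric is the same truncation question.
\item Your displayed formula for $\Xi_f$ is off. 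With the same $\jcal_m$ on both sides and the canonical map, the kernel and cokernel are $i_*$ of twists of $\Psi_f^{\un}(M)$, and the image is $j_{!*}(M\otimes f_\eta^*\jcal_m)$. Beilinson's $\Xi_f$ uses $\jcal_{m+1}$ on one side and $\jcal_m$ on the other. For instance, for $m\gg0$ and up to Tate twist, it is the kernel of $j_!(M\otimes f_\eta^*\jcal_{m+1})\to j_*(M\otimes f_\eta^*\jcal_m)$. This does not affect your conclusion, which only uses closure under kernels and cokernels.
\end{itemize}
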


\begin{proof}
    There exists a short exact sequence $0 \longrightarrow i_*\Psi_f^{\an} \longrightarrow \Xi_f \longrightarrow j_* \longrightarrow 0$ on $\perv(X)$ and hence it lifts to a triangle on $\dcatct^b(X)$ (modulo the usual - not the geometric one we are going to prove - Beilinson's equivalence \cite{beilinson-1987-2}). 
\end{proof}
Since one can glue perverse sheaves of geometric origin, the following is an obvious consequence. 
\begin{cor} \label{cor: perverse shv of geometric origin supported on a closed subscheme}
    Let $f \colon X \longrightarrow \mathbb{A}_k^1$ be a morphism of $k$-varieties. Let $i \colon X_{\sigma} \longrightarrow X$ be the closed immersion of the special fiber, then the exact functor
    \begin{equation*}
        i_* \colon \perv_{\geo}(X_{\sigma}) \longrightarrow \perv_{\geo,X_{\sigma}}(X)
    \end{equation*}
    is an equivalence of categories. 
\end{cor}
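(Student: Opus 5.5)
The plan is to prove the corresponding statement for ordinary perverse sheaves first, and then check that every step stays inside the geometric-origin subcategories. For full perverse sheaves, the functor $i_* \colon \perv(X_{\sigma}) \to \perv_{X_{\sigma}}(X)$ is fully faithful and exact. This holds because $i_*$ is perverse $t$-exact and $i^*i_* \simeq \id$. Its essential image is exactly the perverse sheaves supported on $X_{\sigma}$. Indeed, if $P \in \perv(X)$ satisfies $j^*P = 0$, then the localization triangle $j_!j^*P \to P \to i_*i^*P$ gives $P \simeq i_*i^*P$. Since $i_*$ is $t$-exact and conservative on $t$-structures, $i^*P = i^!P$ is perverse. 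The only thing to add is that geometric origin is preserved in both directions.

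The first step is to show that $i_*$ sends $\perv_{\geo}(X_{\sigma})$ into $\perv_{\geo,X_{\sigma}}(X)$. By Proposition \ref{prop: perversity of operations on geometric perverse sheaves}, $i_*$ preserves $\dcat^b_{\geo}$. It is also perverse $t$-exact, so it maps geometric perverse sheaves to geometric perverse sheaves supported on $X_{\sigma}$. Full faithfulness then follows from the classical case, because both categories are full subcategories of $\perv(X_{\sigma})$ and $\perv(X)$.

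For essential surjectivity, take $P \in \perv_{\geo}(X)$ with support in $X_{\sigma}$, so that $j^*P = 0$. Then $P \simeq i_*i^*P$, and $i^*P$ lies in $\dcat^b_{\geo}(X_{\sigma})$ by Proposition \ref{prop: perversity of operations on geometric perverse sheaves}. It is perverse by the classical argument above, since $i^*P \simeq i^!P$ when $j^*P=0$ and $i^*$ is right $t$-exact while $i^!$ is left $t$-exact. Hence $i^*P \in \perv_{\geo}(X_{\sigma})$, and this completes the argument.

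The only place where the paper's gluing formalism could be needed is in making sure that $\perv_{\geo}(-)$ means the heart of the perverse $t$-structure restricted to $\dcat^b_{\geo}(-)$. So the one point to double-check is that the perverse truncations preserve $\dcat^b_{\geo}$, and I expect this to be the main obstacle. I would get it from the same stability argument, through the gluing and the fact that $\perv_{\geo}$ is closed under subquotients, which one obtains via $\Xi_f$ and $\Phi_f$ as in the preceding proposition. Given that, the corollary follows directly, without any explicit use of gluing data.
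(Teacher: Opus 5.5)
Your argument is correct, but it takes a different route from the paper. The paper gives no separate proof. It deduces the corollary from the preceding proposition: since $\Upsilon_f^{\an}$, $\Phi_f^{\an}$ and $\Xi_f^{\an}$ preserve geometric origin, Beilinson's gluing description of $\perv_{\geo}(X)$ in terms of data on $X_{\eta}$ and $X_{\sigma}$ is available. Objects supported on $X_{\sigma}$ are then the glued objects with vanishing generic part, and these are identified with $\perv_{\geo}(X_{\sigma})$.

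You instead use only three inputs:
\begin{itemize}
\item the localization triangle;
\item the classical equivalence $\perv(X_{\sigma}) \simeq \perv_{X_{\sigma}}(X)$;
\item the stability of $\dcat^b_{\geo}$ under $i_*$ and $i^*$ from Proposition \ref{prop: perversity of operations on geometric perverse sheaves}.
\end{itemize}
Your route is more elementary and never uses $f$, so it works for any closed immersion $Z \hookrightarrow X$. It also avoids the gluing proposition, whose justification in the paper itself leans on the classical Beilinson equivalence. The paper's route is heavier, but it places the statement inside the gluing picture used elsewhere.

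On your last paragraph: the obstacle you anticipate does not arise in your argument. You never take a perverse truncation. You only need that $\perv_{\geo}(X_{\sigma})$ consists of the perverse objects of $\dcat^b_{\geo}(X_{\sigma})$, i.e.\ $\perv(X_{\sigma}) \cap \dcat^b_{\geo}(X_{\sigma})$. That is exactly the heart of the restricted $t$-structure, which the paper takes for granted as part of its set-up. So you can drop that paragraph. As written, your sketch there, via gluing and closure under subquotients, would not prove truncation-stability anyway: gluing only describes objects already known to lie in the heart.
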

\subsection{Local systems of geometric origin}
Let $X/k$ be a smooth variety and we set $\localsystem_{\geo}(X) = \perv_{\geo}(X) \cap \localsystem(X)$ to be the Serre subcategory of $\perv_{\geo}(X)$ containing local systems of geometric origin. Let 
$\dcat^b_{\loc_{\geo}}(X) \subset \dcat^b_{\geo}(X)$ be the full subcategory whose objects are complexes having (constructible) cohomologies being shifted local systems of geometric origin. Alternatively, we can define $\dcat^b_{\log_{\geo}}(X)$ to be the subcategory whose perverse cohomologies are local systems of geometric origin. It is clear that the category $\dcat^b_{\loc_{\geo}}(X)$ admits a perverse $t$-structure whose heart is $\localsystem_{\geo}(X)[\dim(X)]$ and that the natural embedding $\dcat^b_{\loc_{\geo}}(X) \longrightarrow \dcat^b_{\geo}(X)$ is perverse $t$-exact whose restriction to hearts is the natural embedding $\localsystem_{\geo}(X)[\dim(X)] \longhookrightarrow \perv_{\geo}(X)$.
\begin{prop} \label{prop: find an open subscheme with right effaceable property}
    Let $X$ be a smooth, connected $k$-variety, then there exists an affine, open subscheme $U \subset X$ such that 
    \begin{equation*}
        \Hom_{\dcat_{\loc_{\geo}}^b(U)}(\mathds{1}_U[\dim(U)],-) \colon \dcat_{\loc_{\geo}}^b(U) \longrightarrow \operatorname{Vect}_{\mathbb{Q}}
    \end{equation*}
    is right effaceable. 
\end{prop}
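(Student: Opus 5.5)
This follows the classical argument in \cite[Chapter 4]{achar-book}: shrink $X$ to a $K(\pi,1)$ neighbourhood. By Artin's theorem on good neighbourhoods (elementary fibrations), we may choose a nonempty affine open $U \subset X$ whose analytification $U^{\an}$ is a $K(\pi,1)$ space, i.e.\ its universal cover is contractible. For such $U$, the group $\Hom_{\dcat^b_{\loc_{\geo}}(U)}(\mathds{1}_U[d],L[d][n])$, with $d=\dim(U)$ and $L$ a local system of geometric origin, is the sheaf cohomology $H^n(U^{\an},L)$. This in turn equals the group cohomology $H^n(\pi_1(U^{\an}),L_x)$. In this formulation, right effaceability means the following: for every $n>0$ and every class $c \in H^n(U,L)$, there is an injection $L \hookrightarrow L'$ in $\localsystem_{\geo}(U)$ such that $c$ maps to zero in $H^n(U,L')$.

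First I would reduce to degree $n=1$ by the standard dimension-shifting argument for effaceable $\delta$-functors. Given $c \in H^n(U,L)$ with $n\geq 2$, embed $L \hookrightarrow L_1$ with quotient $Q$. The class $c$ comes from $H^{n-1}(U,Q)$ whenever its image in $H^n(U,L_1)$ vanishes. One then argues by induction, which requires that all the auxiliary local systems stay of geometric origin. This is where the restriction to $\localsystem_{\geo}$ must be handled carefully. The plan is to use effacing objects that are visibly geometric, namely pushforwards $p_*\mathbb{Q}$ along finite étale covers $p\colon V \to U$, together with their tensor products with $L$. The projection formula $L \otimes p_*\mathbb{Q}_V \simeq p_*p^*L$ and proposition \ref{prop: perversity of operations on geometric perverse sheaves} ensure these remain of geometric origin. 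The unit map $L \hookrightarrow p_*p^*L$ is injective, since it is split by the trace divided by the degree.

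The key step is this: a class $c\in H^n(U,L)$ dies in $H^n(V,p^*L)=H^n(U,p_*p^*L)$ for a suitable finite étale cover $V\to U$ built from $U$. Analytically, a cohomology class of $\pi_1$ with coefficients is killed after restriction to some finite-index subgroup only under goodness hypotheses on $\pi_1$, so this is not automatic. Instead, I would use the elementary fibration structure $U \to U'$ with fibre a punctured curve, together with induction on $\dim(U)$. Via the Leray/Hochschild–Serre spectral sequence, classes split into fibre contributions in degrees $\le 1$, where free groups make effacement by covers or by the extensions $\jcal_m$-type unipotent local systems possible. The remaining contributions come from the base, where induction applies. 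Each step only uses pullbacks, pushforwards along finite étale maps, and extensions realised as $H^1$ of geometric families, so geometric origin is preserved.

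The main obstacle is precisely keeping the effacing local systems inside $\localsystem_{\geo}(U)$. In the classical proof one effaces by arbitrary (infinite-rank or non-geometric) $\pi_1$-modules, such as coinduced modules, which are not available here. If the cover argument fails, my fallback is to efface by $L\otimes f^*\mathrm{Log}^{\vee}_m$-type unipotent extensions pulled back along the fibration coordinate $f$. These are geometric by lemma \ref{lem: monodromy of Kummer motive}, and a class in the free-group fibre $H^1$ becomes trivial once the unipotent monodromy is enlarged.
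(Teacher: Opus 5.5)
\textbf{There is a genuine gap: the skeleton is right, but the effacement mechanism fails.} Your skeleton (Artin neighbourhood, induction on dimension through an elementary fibration $f\colon U\to U'$, Leray spectral sequence with fibre contributions in degrees $\le 1$) is the same as the paper's, which follows \cite[Proposition 4.5.4]{achar-book}. The problems are in how you propose to efface classes.
\begin{itemize}
\item[(i)] Finite étale covers never efface a nonzero class with $\mathbb{Q}$-coefficients. As you note yourself, $L\to p_*p^*L$ is split by $\frac{1}{\deg p}$ times the trace. Hence $H^n(U,L)\to H^n(U,p_*p^*L)$ is split injective. This is the transfer argument and holds for every $\pi_1$; goodness only matters for torsion coefficients. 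Any embedding that effaces a nonzero class must therefore be a non-split extension.
\item[(ii)] The reduction to $n=1$ is circular. The class $c$ comes from $H^{n-1}(U,Q)$ if and only if $c\mapsto 0$ in $H^n(U,L_1)$, and that is exactly the effaceability you are trying to prove.
\item[(iii)] The fallback $L\otimes f^*\mathrm{Log}^{\vee}_m$ is too weak. In degree one it only kills classes in the image of a connecting map that is cup product with $f^*$ of the Kummer class. For example, take $U=\mathbb{P}^1\setminus\{0,1,\infty\}$, $L=\mathbb{Q}$ and $f=t$. The connecting map factors through $t^*H^1(\mathbb{G}_m,\mathbb{Q})=\mathbb{Q}\cdot[\operatorname{dlog} t]$, so $[\operatorname{dlog}(t-1)]$ survives for every $m$. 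For fibres of positive genus, or $L$ with nontrivial monodromy, most $H^1$-classes are not of this form.
\end{itemize}

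\textbf{The missing idea is the universal extension, and the ``main obstacle'' you identify is not one.}
\begin{itemize}
\item[(a)] On an affine curve, Artin vanishing gives $H^n(U,L)=0$ for $n\geq 2$. Choose a basis $e_1,\dots,e_m$ of $H^1(U,L)=\Hom(\mathds{1}_U,L[1])$ and take the cocone of $e=(e_1,\dots,e_m)\colon\mathds{1}_U^{\oplus m}\to L[1]$. This gives $0\to L\to F\to\mathds{1}_U^{\oplus m}\to 0$ whose connecting map $H^0(U,\mathds{1}_U^{\oplus m})\to H^1(U,L)$ is surjective. Hence $H^1(U,L)\to H^1(U,F)$ is zero.
\item[(b)] $F$ is of geometric origin for free. $\dcat^b_{\geo}(U)$ is a full triangulated subcategory of $\dcat^b_{\ct}(U)$, so the cocone of a morphism between geometric objects is geometric. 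In other words, $\localsystem_{\geo}(U)$ is closed under extensions, and this is all the paper checks in the curve case. Nothing like coinduced modules is involved.
\item[(c)] In the fibration step you need the same kind of extension to kill the fibre part $R^1f_*L$, made global on $U$. Any obstruction, and the remaining classes coming from the base, are effaced by the induction hypothesis on $U'$ (pushing out along $R^0f_*L\hookrightarrow N$).
\item[(d)] All auxiliary objects are then obtained from geometric local systems by $f^*$, $f_*$ and extensions. The only extra input the paper needs is a geometric version of \cite[Lemma 4.5.3]{achar-book}: for a locally trivial fibration $f$ and $L\in\localsystem_{\geo}(U)$, one has $f_*L\in\dcat^b_{\geo}(U')\cap\dcat^b_{\loc}(U')$. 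This follows from stability of $\dcat^b_{\geo}$ under $f_*$.
\end{itemize}
Replace the cover and $\mathrm{Log}^{\vee}$ constructions by these extensions, and drop the degree reduction.
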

\begin{proof}
We prove the result by induction on the dimension of $X$. If $\dim(X)=0$, there is nothing to prove. Let us assume that $\dim(X) \geq 1$. At this point, the proof goes the same way as the proof of \cite[Proposition 4.5.4]{achar-book} with some slight modifications. If $X$ is an affine open subscheme of $\mathbb{A}^1$. By Artin's vanishing theorem, we know that $\Hom_{\loc_{\geo}}(\mathds{1}_X,L[n])=0$ unless $n=0,1$. We pick a $\mathbb{Q}$-basis $e_1,...,e_m$ of $\Hom_{\loc_{\geo}}(\mathds{1}_X,L[1])$ and comprise into a morphism $e=(e_1,...,e_m) \colon \mathds{1}_X^{\oplus m} \longrightarrow L[1]$. We complete this morphism to a triangle and rotate to get
    \begin{equation*}
        L[\dim(X)] \longrightarrow F[\dim(X)] \longrightarrow (\mathds{1}_X[\dim(X)])^{\oplus m} \overset{e}{\longrightarrow} +1.
    \end{equation*}
    Since $\localsystem_{\geo}(X)[\dim(X)] \subset \perv_{\geo}(X)$ is a Serre subcategory, we see that $F \in \localsystem_{\geo}(X)$ and $L \longrightarrow F$ is injective. From now on, one can copy \textit{Step 2,3} of the proof of \cite[Proposition 4.5.4]{achar-book}. The only thing one needs to take care is a version of \cite[Lemma 4.5.3]{achar-book} for $\dcat^b_{\loc_{\geo}}(-)$ instead of $\dcat^b_{\loc}(-)$. In the proof of \cite[Lemma 4.5.3]{achar-book}, the only "non-functorial" point is \cite[Theorem 1.9.5]{achar-book}. Thus, we have to show that if $f \colon X \longrightarrow Y$ is a locally trivial fibration, then $f_*(L) \in D^+_{\loc_{\geo}}(Y)$ whenver $L \in \localsystem_{\geo}(X)$. By \cite[Theorem 1.9.5]{achar-book}, we already know that $f_*(L) \in D^b_{\loc}(Y)$, but since $\dcat^b_{\loc_{\geo}}(Y) = \dcat^b_{\geo}(Y) \cap \dcat^b_{\loc}(Y)$, we are done. 
\end{proof}
The following result is essentially due to Beilinson (see \cite{beilinson-1987-2}) and is rewritten (with slight generalizations) in \cite[Theorem 1.5.9]{ayoub-2025}. By \cite{beilinson-1987-2} again, we have a realization functor $\operatorname{real} \colon \dcat^b(\localsystem_{\geo}(X)) \longrightarrow \dcat^b_{\loc_{\geo}}(X)$ provided that $X$ is smooth. 
\begin{prop} \label{prop: Beilinson theorem for local systems of geometric origin}
    Let $X/k$ be a variety, then there exists a smooth, connected, affine open subscheme $U \subset X$ such that the Beilinson's realization functor
    \begin{equation*}
        \operatorname{real} \colon \dcat^b(\localsystem_{\geo}(U)) \longrightarrow \dcat^b_{\loc_{\geo}}(U)
    \end{equation*}
    is an equivalence of categories. 
\end{prop}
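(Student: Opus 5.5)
Shrink $X$ first and then check Beilinson's effaceability criterion on the heart of $\dcat^b_{\loc_{\geo}}(U)$. To begin, replace $X$ by a dense open subscheme that is smooth and connected; this is possible because $X$ is reduced up to nilpotents, and the categories involved only see $X_{\textnormal{red}}$. Then apply Proposition \ref{prop: find an open subscheme with right effaceable property} to obtain an affine open $U$ on which $\Hom_{\dcat^b_{\loc_{\geo}}(U)}(\mathds{1}_U[\dim U],-)$ is right effaceable.

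The aim is to show that $\operatorname{real}$ is an equivalence for this $U$. By \cite[Lemma 1.4]{beilinson-1987-2} (equivalently \cite[Corollary A.7.19]{achar-book}), it suffices to prove the following: for $L,L' \in \localsystem_{\geo}(U)$ and $n>0$, every morphism $L[\dim U] \to L'[\dim U + n]$ in $\dcat^b_{\loc_{\geo}}(U)$ is effaceable by a monomorphism $L' \hookrightarrow L''$ in $\localsystem_{\geo}(U)$.

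The main reduction is from arbitrary $L$ to $\mathds{1}_U$. Since $\localsystem_{\geo}(U)$ is a category of $\mathbb{Q}$-local systems, we have the adjunction
\begin{equation*}
\Hom(L[\dim U], L'[\dim U+n]) \simeq \Hom(\mathds{1}_U[\dim U], (L^{\vee}\otimes L')[\dim U+n]),
\end{equation*}
where $L^{\vee}$ is the dual local system. It is of geometric origin because duality and $\otimes$ preserve $\dcat^b_{\geo}$ by Proposition \ref{prop: perversity of operations on geometric perverse sheaves}, and up to shift the Verdier dual of a local system on smooth $U$ is its dual. Right effaceability gives a monomorphism $L^{\vee}\otimes L' \hookrightarrow M$ in $\localsystem_{\geo}(U)$ that kills the transported class.

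This has to be brought back to an effacement of $L'$. Set $L'' = L\otimes M$. The inclusion $L' \hookrightarrow L\otimes L^{\vee}\otimes L' \hookrightarrow L\otimes M$ is the coevaluation followed by $L\otimes(-)$ applied to the monomorphism. It is injective: coevaluation is split injective because $\mathbb{Q}$ has characteristic zero and we can divide by the rank, and $L\otimes -$ is exact. Naturality of the adjunction isomorphism in the second variable then shows that the composite $L[\dim U]\to L'[\dim U+n]\to L''[\dim U+n]$ corresponds to the image of our class in $\Hom(\mathds{1}_U[\dim U], (L^{\vee}\otimes L'')[\dim U+n])$. That class factors through $M$, where it is already zero.

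The step I expect to be the main obstacle is the adjunction bookkeeping: checking that the coevaluation-then-evaluation triangle identities make the transported morphism vanish after enlarging to $L\otimes M$, rather than after enlarging to $M$ alone. If this turns out to be messy, the fallback is to take $U$ connected, use right effaceability to get $\operatorname{Ext}^n_{\localsystem_{\geo}(U)}(\mathds{1},-)\simeq \Hom(\mathds{1}[\dim U],-[\dim U+n])$ via Yoneda Ext comparison, and then transfer to general $L$ through the exact, Ext-compatible adjunction $L\otimes - \dashv L^{\vee}\otimes -$ on both sides.
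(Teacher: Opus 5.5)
Your skeleton matches the paper's: take $U$ from Proposition~\ref{prop: find an open subscheme with right effaceable property}, transfer right effaceability from $\mathds{1}_U$ to an arbitrary $L$ through the $L^{\vee}\otimes(-)$ adjunction (the paper's geometric analogue of \cite[Lemma 4.5.2]{achar-book}), and conclude with Beilinson's criterion as in \cite[Theorem 4.5.5]{achar-book}. However, your explicit transfer step is wrong, at exactly the point you flagged. Set $j=(1_L\otimes\iota)\circ(\operatorname{coev}\otimes 1_{L'})$. The map $1_{L^{\vee}}\otimes j$ feeds into $\iota$ the copy of $L^{\vee}$ created by the coevaluation, not the $L^{\vee}$-factor that carries $\Phi(\xi)$. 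So it does not factor through $\iota$, and $j$ need not kill $\xi$.

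Here is a counterexample. Take $U=\mathbb{G}_{m,k}$ and view local systems as finite-dimensional $\mathbb{Q}$-spaces with an automorphism $T$, so that $H^1(U,V)=V/(T-1)V$ and $\operatorname{Ext}^1(A,B)=H^1(U,A^{\vee}\otimes B)$. Let $L$ be constant with basis $e_1,e_2$, let $L'=\mathds{1}$ and $n=1$, and let $\xi$ be the class of $e_1^*$. Put $x_i=e_i^*\otimes 1$, and let $M\supset L^{\vee}\otimes L'$ have basis $x_1,x_2,y$ with $Ty=y+x_1$; thus $M\simeq\mathds{1}\oplus\mathcal{J}_2$, which is of geometric origin. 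The image of $\Phi(\xi)$ in $H^1(U,M)=M/\mathbb{Q}x_1$ is the class of $x_1$, which is zero. But $j(1)=e_1\otimes x_1+e_2\otimes x_2$, so $j_*\xi$ is the class of $e_1^*\otimes e_1\otimes x_1+e_1^*\otimes e_2\otimes x_2$ in $(L^{\vee}\otimes L\otimes M)/(L^{\vee}\otimes L\otimes\mathbb{Q}x_1)$, and this class is nonzero.

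The fix is one more pushout. By the zig-zag identity, $\xi=(\operatorname{ev}\otimes 1_{L'})[n]\circ(1_L\otimes\Phi(\xi))$ with $\operatorname{ev}\colon L\otimes L^{\vee}\to\mathds{1}$. Moreover $(1_L\otimes\iota)[n]\circ(1_L\otimes\Phi(\xi))=1_L\otimes(\iota[n]\circ\Phi(\xi))=0$. Let $L''$ be the pushout in $\localsystem_{\geo}(U)$ of $L'\xleftarrow{\operatorname{ev}\otimes 1_{L'}}L\otimes L^{\vee}\otimes L'\xrightarrow{1_L\otimes\iota}L\otimes M$. Then $L'\to L''$ is a monomorphism, being a pushout of the monomorphism $1_L\otimes\iota$, and it kills $\xi$. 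Your $j$ followed by $L\otimes M\to L''$ equals $\operatorname{rank}(L)$ times this map, so your construction was one step short.

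This repair also drops the split injectivity of the coevaluation. That property needs $L$ to have nonzero rank on every component of $U^{\an}$, which is not automatic: $U$ connected over $k$ does not make $U^{\an}$ connected.

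Your fallback is also correct. Effaceability for $\mathds{1}$ identifies $\operatorname{Ext}^n_{\localsystem_{\geo}(U)}(\mathds{1},-)$ with $\Hom(\mathds{1},-[n])$. The adjunctions on both levels then give an isomorphism $\operatorname{Ext}^n(L,-)\simeq\Hom(L,-[n])$, natural in the second variable. Effaceability of Yoneda Ext then transfers to $\Hom(L,-[n])$. This is in substance what the paper's appeal to Achar's Lemma 4.5.2 does.

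One minor point: if $X$ is reducible, you want a nonempty smooth connected open subscheme, not a dense one.
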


\begin{proof}
   The proof remains the same as the proof of \cite[Theorem 4.5.5]{achar-book} with the follows to be kept in mind: we use proposition \ref{prop: find an open subscheme with right effaceable property} as a replacement of \cite[Proposition 4.5.4]{achar-book} and to conclude, we need a version of \cite[Lemma 4.5.2]{achar-book} for local systems of geometric origin. However, the proof of \cite[Lemma 4.5.2]{achar-book} is of purely six-functor formalism nature so we win since we know that six operations preserve perverse sheaves of geometric origin.
\end{proof}
\subsection{Beilinson's theorem for perverse sheaves of geometric origin} Now we are fully equipped to prove the geometric version of Beilinson's equivalence. 
\begin{lem} \label{lem: adjunctions on perverse sheaves of geometric}
    Let $j \colon U \longrightarrow X$ be an open immersion of $k$-varieties with $U$ affine. The canonical maps
    \begin{align*}
        \Hom_{\dcat^b(\perv_{\geo}(U))}(j^*M,N[k]) \longrightarrow  \Hom_{\dcat^b(\perv_{\geo}(X))}(M,j_*N[k])  \\ 
         \Hom_{\dcat^b(\perv_{\geo}(X))}(j_!M,N[k]) \longrightarrow  \Hom_{\dcat^b(\perv_{\geo}(U))}(M,j^*N[k])
    \end{align*}
    are isomorphisms.
\end{lem}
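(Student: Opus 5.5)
Since $U$ is affine, $j$ is an affine open immersion, so $j_*$ and $j_!$ are perverse $t$-exact. By Proposition \ref{prop: perversity of operations on geometric perverse sheaves} they restrict to exact functors $j_*,j_! \colon \perv_{\geo}(U) \longrightarrow \perv_{\geo}(X)$, and $j^*$ is exact as well. All three therefore extend termwise to triangulated functors on the bounded derived categories $\dcat^b(\perv_{\geo}(-))$. The plan is to show that the pairs $(j^*,j_*)$ and $(j_!,j^*)$ remain adjoint on these derived categories, with unit and counit induced termwise from the abelian adjunctions. The maps in the statement are exactly the adjunction maps, so they will be isomorphisms.

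The key steps, in order, are as follows.
\begin{enumerate}
\item Check that the adjunctions $j^* \dashv j_*$ and $j_! \dashv j^*$ hold on the abelian categories of geometric perverse sheaves. This follows from full faithfulness of $\perv_{\geo} \subset \perv$ and the usual adjunctions on $\dcatct^b$, since all objects involved lie in the hearts.
\item Recall the general fact that if an adjunction $F \dashv G$ between abelian categories has both functors exact, then the termwise extensions to $\dcat^b$ are again adjoint. The unit $\id \to GF$ and counit $FG \to \id$ are defined on complexes termwise. They are compatible with quasi-isomorphisms because $F$ and $G$ preserve them, and the triangle identities hold termwise.
\item Conclude that $\Hom_{\dcat^b(\perv_{\geo}(U))}(j^*M,N[k]) \simeq \Hom_{\dcat^b(\perv_{\geo}(X))}(M,j_*N[k])$, and similarly for $j_!$. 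Then verify that this isomorphism coincides with the \emph{canonical map} in the statement: applying $j_*$ and then composing with the unit $M \to j_*j^*M$. This is a direct unwinding of the definition in terms of roofs or Yoneda extensions.
\end{enumerate}

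As a sanity check, and as the form in which the lemma will presumably be used, one can also argue via Yoneda Ext groups. Here $\Hom(M,N[k]) = \operatorname{Ext}^k$ in the abelian category, computed by Yoneda $k$-extensions. An exact functor sends Yoneda extensions to Yoneda extensions, and the unit and counit give inverse maps on equivalence classes. This is Verdier/Yoneda bookkeeping.

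I expect the main obstacle to be the identification in step 3. One must make sure that the map written in the statement (apply $j_*$, then precompose with the unit) matches the derived adjunction isomorphism. This matters most because $\dcat^b(\perv_{\geo}(-))$ is not yet known to agree with $\dcat^b_{\geo}(-)$, so we cannot simply borrow the adjunction from the six-functor formalism. I would first try the termwise argument of step 2, which avoids any realization functor. If that becomes messy, I would fall back on Yoneda Ext and exactness, following the analogous argument in \cite[Chapter 4]{achar-book} verbatim.
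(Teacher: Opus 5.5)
Your proposal is correct and follows essentially the same argument as the paper. The paper's proof is the one-line observation that $j_!$, $j_*$, $j^*$ are $t$-exact, so they extend termwise to $\dcat^b(\perv_{\geo}(-))$, and that the triangle identities of the abelian adjunctions then carry over to these derived categories; this is exactly your steps 1--3, and the Yoneda-Ext cross-check is not needed.
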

\begin{proof}
    This simply follows from the triangle identities and that $j_!,j_*,j^*$ are $t$-exact on $D^b(\perv_{\geo}(U))$.
\end{proof}
\begin{lem} \label{lem: full faithfulness of closed immersions on perverse sheaves of geometric origin}
    Let $i \colon Z \longhookrightarrow X$ be a closed immersion. The functor 
    \begin{equation*}
        i_* \colon \dcat^b(\perv_{\geo}(X_{\sigma})) \longhookrightarrow \dcat^b(\perv_{\geo}(X))
    \end{equation*}
    is fully faithful. Its essential image, denoted by $\dcat^b_{Z}(\perv_{\geo}(X))$, is the kernel of 
    \begin{equation*}
        j^* \colon \dcat^b(\perv_{\geo}(U)) \longrightarrow \dcat^b(\perv_{\geo}(X))
    \end{equation*}
    where $j \colon U \longhookrightarrow X$ is the open complement. 
\end{lem}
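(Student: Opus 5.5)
We follow the argument for \cite[Lemma 4.5.6]{achar-book} (Beilinson's original gluing argument), working entirely inside the abelian categories $\perv_{\geo}(-)$ and their bounded derived categories. The first task is to establish the full faithfulness of $i_*$, i.e. to show that for $M,N \in \perv_{\geo}(Z)$ and all $k$ the map
\begin{equation*}
    \Hom_{\dcat^b(\perv_{\geo}(Z))}(M,N[k]) \longrightarrow \Hom_{\dcat^b(\perv_{\geo}(X))}(i_*M,i_*N[k])
\end{equation*}
is an isomorphism. For $k=0$ this holds because $i_*$ is fully faithful on hearts. For $k=1$ it follows from Corollary \ref{cor: perverse shv of geometric origin supported on a closed subscheme}: $\perv_{\geo,Z}(X)$ is a Serre subcategory of $\perv_{\geo}(X)$, so an extension of $i_*M$ by $i_*N$ in $\perv_{\geo}(X)$ is again supported on $Z$ and comes from $\perv_{\geo}(Z)$, and similarly for morphisms of extensions.

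For $k \geq 2$ we use an effaceability criterion. By the standard lemma on Yoneda Ext groups (cf.\ \cite[Lemma A.7.x]{achar-book}), once the map is an isomorphism in degrees $\le 1$, it suffices to show that every class $\xi \in \operatorname{Ext}^k_{\perv_{\geo}(X)}(i_*M,i_*N)$ with $k\ge 2$ is effaced by some monomorphism $i_*N \hookrightarrow i_*N'$ with $N' \in \perv_{\geo}(Z)$; equivalently, that every Yoneda $k$-extension between objects supported on $Z$ is equivalent to one whose middle terms are all supported on $Z$. Concretely, we represent $\xi$ by an exact sequence $0 \to i_*N \to E_1 \to \cdots \to E_k \to i_*M \to 0$ in $\perv_{\geo}(X)$ and replace each $E_j$ by a perverse sheaf supported on $Z$.

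The main obstacle is making this replacement, and the key input is the fact that one can glue geometric perverse sheaves. Since the question is local, we first reduce to the case where $Z$ is a principal divisor $Z = f^{-1}(0)$ for some $f \colon X \to \mathbb{A}^1_k$. For general $Z$ we cover $X$ by affine opens on which $Z$ is cut out by finitely many functions and induct on the number of equations, using Lemma \ref{lem: adjunctions on perverse sheaves of geometric} to pass between $X$ and the affine opens. Once $Z$ is principal, we use that the unipotent nearby functor $\Upsilon_f^{\an}$, the vanishing cycles functor and the maximal extension $\Xi_f^{\an}$ preserve geometric origin. This gives the exact functor $E \mapsto \phnor^0 i^* E$, or more precisely the Beilinson functor $E \mapsto \Phi^{un}_f(E)$ composed with the equivalence $\perv_{\geo,Z}(X) \simeq \perv_{\geo}(Z)$. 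On objects supported on $Z$ this functor is naturally isomorphic to the identity, and it is exact on all of $\perv_{\geo}(X)$. Applying it to the Yoneda extension yields a $Z$-supported extension. There is a natural comparison map between the two extensions, and it shows they define the same class, because the kernel and cokernel of $E_j \to \Phi(E_j)$ are expressed via $j_!j^*,j_*j^*$ terms which, by Lemma \ref{lem: adjunctions on perverse sheaves of geometric}, contribute nothing to Hom from or into $Z$-supported objects. This proves full faithfulness.

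For the essential image, the inclusion $\operatorname{Im}(i_*) \subset \ker(j^*)$ is clear, since $j^*i_* = 0$ on hearts and hence on derived categories. Conversely, if $j^*K = 0$ for $K \in \dcat^b(\perv_{\geo}(X))$, then each cohomology object $H^n(K)$, computed for the standard $t$-structure and $t$-exact $j^*$, lies in $\perv_{\geo,Z}(X) \simeq \perv_{\geo}(Z)$ by Corollary \ref{cor: perverse shv of geometric origin supported on a closed subscheme}. We then conclude by induction on the cohomological amplitude, using truncation triangles and the fact that the essential image of a fully faithful exact functor is closed under extensions, so that $K$ lies in the triangulated subcategory generated by $i_*$ of its cohomology objects.
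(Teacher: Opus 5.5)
Your overall architecture matches the argument the paper cites for this lemma (Ivorra--Morel, Thm.~4.1; Achar, Prop.~4.5.7, i.e.\ Beilinson's argument). It runs: Serre subcategory in degrees $\le 1$, an effaceability criterion above that, reduction to a principal divisor $Z=f^{-1}(0)$, Beilinson's functors preserving geometric origin, and truncations for the essential image. Those parts are fine.

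The gap is in the one step that carries real content: comparing a Yoneda extension $E_\bullet$ with its image under your exact functor. Work on $\mathbb{A}^1$ with $Z=\{0\}$.

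First, $E\mapsto\phnor^0 i^*E$ is not exact. It sends the monomorphism $\delta_0\hookrightarrow j_!\mathbb{Q}_U[1]$ to $\mathbb{Q}\to 0$.

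Second, $\Phi^{un}_f$ is exact and is the identity on $Z$-supported objects. However, no natural map $E\to i_*\Phi^{un}_f(E)$, or one in the other direction, can restrict to the identity there. Test naturality on $\delta_0\hookrightarrow j_!\mathbb{Q}_U[1]$ (resp.\ $j_*\mathbb{Q}_U[1]\twoheadrightarrow\delta_0$). The object $j_!\mathbb{Q}_U[1]$ has no nonzero map to a $Z$-supported object (resp.\ $j_*\mathbb{Q}_U[1]$ receives none from one). Yet $\Phi^{un}_f$ preserves the mono (resp.\ epi), so you get a contradiction. Hence the ``natural comparison map'' $E_j\to\Phi(E_j)$ you invoke does not exist.

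Your Hom-vanishing justification would not close the argument either. Lemma~\ref{lem: adjunctions on perverse sheaves of geometric} kills $\Hom(i_*M,j_*(-))$ and $\Hom(j_!(-),i_*N)$. It does not kill maps from $j_*$-terms to $Z$-supported objects, e.g.\ $j_*\mathbb{Q}_U[1]\twoheadrightarrow\delta_0$.

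What is missing is that $\Phi^{un}_f$ is tied to the identity by a natural zig-zag of exact functors. You can read this off Beilinson's formula $i_*\Phi^{un}_f(E)=H\big(j_!j^*E\to\Xi_f(j^*E)\oplus E\to j_*j^*E\big)$.
\begin{itemize}
\item Put $K(E)=\Xi_f(j^*E)\times_{j_*j^*E}E$, the kernel of the epimorphism $\Xi_f(j^*E)\oplus E\to j_*j^*E$. It is exact in $E$ because $j^*$, $j_*$ and $\Xi_f$ are.
\item The projection $K(E)\twoheadrightarrow E$ has $Z$-supported kernel, namely the unipotent nearby cycles of $j^*E$.
\item The map $K(E)\twoheadrightarrow i_*\Phi^{un}_f(E)$ is the quotient by $j_!j^*E$.
\item Both maps are the identity when $j^*E=0$.
\end{itemize}
Now apply $K$ to $0\to i_*N\to E_1\to\cdots\to E_k\to i_*M\to 0$. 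This gives morphisms of $k$-extensions, with identity end terms, to $E_\bullet$ and to $i_*\Phi^{un}_f(E_\bullet)$. So $\xi$ is the class of a $Z$-supported extension, and it is effaced by the monomorphism $i_*N\hookrightarrow i_*\Phi^{un}_f(E_1)$. That is exactly the input your criterion needs, and no Hom-vanishing is involved.

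With this replacement, the proof goes through. The local-to-global step should also be phrased via effaceability: over an affine cover, push $i_*N$ into $\bigoplus_\alpha j_{\alpha*}N'_\alpha$ using Lemma~\ref{lem: adjunctions on perverse sheaves of geometric}.
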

\begin{proof} One can copy the proof of \cite[Theorem 4.1]{florian+morel-2019} (see also \cite[Proposition 4.5.7]{achar-book}). 
\end{proof} 
\begin{theorem} \label{thm: Beilinson's theorem for geometric complexes}
    Let $X$ be a $k$-variety, then the Beilinson's realization functor
    \begin{equation*}
         \operatorname{real} \colon \dcat^b(\perv_{\geo}(X)) \longrightarrow \dcat^b_{\geo}(X)
    \end{equation*}
    is an equivalence of categories. 
\end{theorem}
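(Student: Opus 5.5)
We follow the strategy of \cite[Theorem 4.5.9]{achar-book}. By the criterion recalled at the start of the appendix (\cite[Lemma 1.4]{beilinson-1987-2}), it suffices to show effaceability: for $M,N \in \perv_{\geo}(X)$ and $n>0$, every morphism $M \to N[n]$ in $\dcat^b_{\geo}(X)$ is killed after composing with some monomorphism $N \hookrightarrow N'$ in $\perv_{\geo}(X)$. Equivalently, we prove by induction on $\dim X$ (and, for fixed dimension, on $\dim \operatorname{supp}(M)+\dim\operatorname{supp}(N)$) that $\operatorname{real}$ induces isomorphisms
\begin{equation*}
\Hom_{\dcat^b(\perv_{\geo}(X))}(M,N[n]) \overset{\sim}{\longrightarrow} \Hom_{\dcat^b_{\geo}(X)}(M,N[n]).
\end{equation*}
Since both sides are $\delta$-functors in each variable and $\perv_{\geo}(X)$ is Artinian, we may reduce to $M$ and $N$ simple, hence of the form $\mathrm{IC}(Z,L)$ with $L$ a local system of geometric origin on a smooth locally closed $Z$. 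The dimension zero case is immediate: both categories are then $\dcat^b$ of finite-dimensional $\mathbb{Q}$-vector spaces over points.

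For the inductive step, set $Z = \operatorname{supp}(M)\cup\operatorname{supp}(N)$. Using Lemma \ref{lem: full faithfulness of closed immersions on perverse sheaves of geometric origin} on the source side and the corresponding classical full faithfulness of $i_*$ on $\dcat^b_{\geo}$ (Proposition \ref{prop: perversity of operations on geometric perverse sheaves}), we may replace $X$ by $Z$. By Proposition \ref{prop: Beilinson theorem for local systems of geometric origin}, we then choose a smooth, connected, affine open $j\colon U \hookrightarrow X$ on which $M|_U,N|_U$ are shifted local systems of geometric origin and $\operatorname{real}$ is an equivalence on $\dcat^b_{\loc_{\geo}}(U)$. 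We also need that $\dcat^b(\localsystem_{\geo}(U)) \to \dcat^b(\perv_{\geo}(U))$ is fully faithful on these objects; this follows from $\localsystem_{\geo}(U)[\dim U]$ being a Serre subcategory that is stable under the effacing extensions constructed in Proposition \ref{prop: find an open subscheme with right effaceable property}.

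Consider the triangle $j_!j^*M \to M \to i_*i^*M \to$. By the five lemma it suffices to treat the two outer terms. The term $i_*i^*M$ is handled by the induction hypothesis on the complement $Z\setminus U$, using Lemma \ref{lem: full faithfulness of closed immersions on perverse sheaves of geometric origin} once more. Note that $i^*M$ is not perverse, so we use its finite perverse filtration together with dévissage. For $j_!j^*M$, we use the adjunction isomorphisms of Lemma \ref{lem: adjunctions on perverse sheaves of geometric} on the abelian side, which require $U$ affine so that $j_!$ and $j_*$ are $t$-exact, and the usual adjunction on $\dcat^b_{\geo}$. This reduces the question to $\Hom$'s on $U$ between local systems, where Proposition \ref{prop: Beilinson theorem for local systems of geometric origin} applies. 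One must check that the realization functor commutes with $j_!$ and $j^*$ compatibly with the adjunction units. This is formal because $\operatorname{real}$ is compatible with $t$-exact functors that admit filtered lifts, and $\dcat^b_{\geo}$ inherits filtered versions from $\dcat^b_{\ct}$.

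The main obstacle is making this dévissage fully compatible with the realization functor. Concretely, one must verify that the maps on $\Hom$ induced by the triangle $j_!j^*\to \id \to i_*i^*$ in $\dcat^b(\perv_{\geo}(X))$ correspond under $\operatorname{real}$ to those of the geometric triangle. The triangle on the abelian side has to be constructed inside $\dcat^b(\perv_{\geo}(X))$ itself, via the Verdier quotient description of Lemma \ref{lem: full faithfulness of closed immersions on perverse sheaves of geometric origin}, and not borrowed from $\dcat^b_{\geo}$. I would first try to show that $\dcat^b(\perv_{\geo}(X))$ is a recollement of $\dcat^b(\perv_{\geo}(U))$ and $\dcat^b(\perv_{\geo}(Z\setminus U))$, mirroring \cite[Proposition 4.5.7]{achar-book}. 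I would then check that $\operatorname{real}$ is a morphism of recollements, which amounts to the compatibility of $\operatorname{real}$ with the exact functors $j^*$, $j_!$, $j_*$ and $i_*$.
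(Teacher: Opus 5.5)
Your route is the paper's: its proof transports \cite[Theorem 4.5.9]{achar-book}, feeding in Proposition~\ref{prop: Beilinson theorem for local systems of geometric origin}, Lemmas~\ref{lem: adjunctions on perverse sheaves of geometric} and \ref{lem: full faithfulness of closed immersions on perverse sheaves of geometric origin}, and the finite length of $\perv_{\geo}(X)$. That is the skeleton you describe. The gap is in the one step on the open $U$ that is not formal. After the adjunctions you need
\[ \Hom_{\dcat^b(\perv_{\geo}(U))}(L,L'[n]) \longrightarrow \Hom_{\dcat^b_{\geo}(U)}(L,L'[n]) \]
to be bijective for local systems $L,L'$.

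Surjectivity is fine, because the isomorphism of Proposition~\ref{prop: Beilinson theorem for local systems of geometric origin} factors through this map. Injectivity is equivalent to your claim that $\dcat^b(\localsystem_{\geo}(U))\to\dcat^b(\perv_{\geo}(U))$ is full on these objects, and your justification does not prove it. Being a Serre subcategory only guarantees agreement of $\operatorname{Ext}^{\le 1}$. The effacing monomorphisms of Proposition~\ref{prop: find an open subscheme with right effaceable property} kill classes in $\Hom_{\dcat^b_{\loc_{\geo}}(U)}$, i.e.\ topological classes. To conclude that a Yoneda class $\eta\in\operatorname{Ext}^n_{\perv_{\geo}(U)}(L,L')$ dies in $\operatorname{Ext}^n_{\perv_{\geo}(U)}(L,L'')$ after $L'\hookrightarrow L''$, you would already need injectivity of $\operatorname{Ext}^n_{\perv_{\geo}(U)}(L,L'')\to\Hom_{\dcat^b_{\geo}(U)}(L,L''[n])$, which is what you are proving. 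So the argument is circular. Induction on $\dim X$ does not rescue it directly, since $U$ has the same dimension as $X$.

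The missing idea is to let the good open depend on the class. Take $\eta$ with $\operatorname{real}(\eta)=0$ and represent it by a Yoneda extension in $\perv_{\geo}(U)$. Apply Proposition~\ref{prop: Beilinson theorem for local systems of geometric origin} to the open set where all its terms are local systems. This gives a dense affine open $v\colon V\hookrightarrow U$ on which the restricted extension lies in $\localsystem_{\geo}(V)$. Hence $v^*\eta$ comes from $\operatorname{Ext}^n_{\localsystem_{\geo}(V)}$, and it vanishes because its realization does.

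It follows that $\eta$ lifts to $\zeta\in\Hom(L,K[n])$, where $K=\operatorname{fib}(L'\to v_*v^*L')$. By Lemma~\ref{lem: full faithfulness of closed immersions on perverse sheaves of geometric origin}, $K$ lies in $i_*\dcat^b(\perv_{\geo}(U\setminus V))$. On $\Hom(L,K[n])\simeq\Hom(i^*L,i^!L'[n])$ the map $\operatorname{real}$ is bijective by induction on dimension. On $\Hom(L,v_*v^*L'[n-1])$ it is surjective via $\localsystem_{\geo}(V)$. A diagram chase then shows $\zeta$ is a connecting image, so $\eta=0$.

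This uses exactly the recollement on $\dcat^b(\perv_{\geo}(-))$ and its compatibility with $\operatorname{real}$ that you propose to establish. The same recollement, through $i^!$, is also what your $i_*i^*M$ term really needs. There $N$ is not supported on $X\setminus U$, so Lemma~\ref{lem: full faithfulness of closed immersions on perverse sheaves of geometric origin} alone does not apply.
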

\begin{proof}
 Given proposition \ref{prop: Beilinson theorem for local systems of geometric origin}, lemmas \ref{lem: adjunctions on perverse sheaves of geometric} and \ref{lem: full faithfulness of closed immersions on perverse sheaves of geometric origin}, one can copy the proof of \cite[Theorem 4.5.9]{achar-book}. The only point that one needs to care is to make sure one can do induction on the number of composition factors of a perverse sheave of geometric origin. This follows from the fact that $\perv_{\geo}(X) \subset \perv(X)$ is a Serre subcategory, which implies that $\perv_{\geo}(X)$ is both noetherian and artinian. 
\end{proof}
\bibliographystyle{alpha}
\bibliography{ref}

\end{document}